\documentclass[a4paper]{article}

\usepackage{
amsmath,
amsthm,
amscd,
amssymb,
stmaryrd,
wasysym
}
\usepackage{comment}
\usepackage{tikz}
\usetikzlibrary{positioning,arrows,calc}
\usepackage{xspace}

\usepackage{graphicx}

\usepackage{url}

\theoremstyle{plain}
\newtheorem{lemma}{Lemma}
\newtheorem{definition}{Definition}
\newtheorem{corollary}{Corollary}

\newtheorem{theorem}{Theorem}

\newtheorem{question}{Question}
\newtheorem{remark}{Remark}

\newtheoremstyle{derp}
{3pt}
{3pt}
{}
{}
{\upshape}
{:}
{.5em}
{}
\theoremstyle{derp}
\newtheorem{example}{Example}

\newcommand{\Z}{\mathbb{Z}}

\newcommand{\N}{\mathbb{N}}

\newcommand\xqed[1]{%
  \leavevmode\unskip\penalty9999 \hbox{}\nobreak\hfill
  \quad\hbox{#1}}
\newcommand\qee{\xqed{$\fullmoon$}}

\newcommand{\Aut}{\mathrm{Aut}}

\newcommand{\llb}{\llbracket}
\newcommand{\rrb}{\rrbracket}

\newcommand{\Alt}{\mathrm{Alt}}
\newcommand{\Sym}{\mathrm{Sym}}

\title{Periodic points and residual finiteness of automorphism groups of subshifts}

\author{
Ville Salo \\
vosalo@utu.fi
}

\begin{document}
\maketitle

\begin{abstract}
If totally periodic points are dense in a subshift $X$, its automorphism group is residually finite. We show a weak converse: if periodic points are not dense in a subshift $X$, then the automorphism group of $X \times Y$ is not residually finite for full shifts $Y$ (and sufficiently full-shift-like subshifts). On the other hand, we show that the automorphism group of a block gluing $\Z^2$-subshift is always locally embeddable in finite groups (thus sofic). Hochman recently constructed a strongly irreducible $\Z^2$-subshift with no periodic points. Combining our result with this example gives a strongly irreducible $\Z^2$-subshift whose automorphism group is not residually finite, which solves a question of Coornaert and Ceccherini-Silberstein. 
\end{abstract}

\section{Introduction}

Let $A$ be a finite set called an \emph{alphabet} (always with at least two elements, called \emph{symbols}), and $G$ a countably infinite group. Then $A^G$ under the product topology is homeomorphic to the Cantor set, and $G$ acts on it by $gx_h = x_{hg}$. This system is called the \emph{full shift}, and a \emph{subshift} is $X \subset A^G$ which is topologically closed and satisfies $gX = X$ for all $g \in G$. We denote by $\Aut(X)$ the \emph{automorphism group} of $X$, meaning the group of $G$-commuting homeomorphisms $f : X \to X$ under function composition.

The automorphism groups of subshifts have been studied in great detail in recent years. For the most part, research has concentrated on subshifts on the group $G = \Z$. One can roughly split the research into automorphism groups of ``full-shift-like'' subshifts \cite{He69,BoLiRu88,KiRo90,Sa19b,FrScTa19,Sa22b,CaSa24} and ``minimal-like/aperiodic subshifts'' \cite{SaTo15d,CyKr16b,CyFrKrPe18,PaSc23a} (although naturally not all papers belong obviously to one side).

There are also some works on automorphism groups of subshifts on other groups. The first was Hochman's paper \cite{Ho10} on automorphism groups of $\Z^d$-subshifts. The recent works \cite{BaCaRi25,Sa25b} in turn study automorphisms of $G$-subshifts for more general $G$. The latter papers show that when $X \subset A^G$ is sufficiently full-shift-like, $\Aut(X)$ reflects some geometric and algebraic properties of the shift group $G$ in interesting ways. For example, \cite{BaCaRi25} shows that $\Aut(A^{F_2})$ embeds in $\Aut(A^G)$ when $G$ is nonamenable. In turn we can state results of \cite{Sa25b} in terms of a function called the systolic growth (we omit the definition, but see \cite{BoCo16,Sa25b}). It is shown that the systolic growth of a full shift $\Aut(A^G)$ is at most $e^{e^{n^{O(1)}}}$ when $G$ is of polynomial growth, while, under the Grigorchuk gap conjecture \cite{Gr14}, for all other groups $G$ we can find a finitely-generated subgroup of $\Aut(A^G)$ for which it is $e^{e^{e^{n^{\Omega(1)}}}}$. It was also shown that $\Aut(A^{\Z^{d+1}})$ does not abstractly embed in $\Aut(A^{\Z^d})$ for any $d \geq 1$.

The systolic growth studied in \cite{Sa25b} is a quantitative property related to residual finiteness, which is a Boolean property of a group. In this paper, we study this property of $\Aut(X)$ in detail, when $X$ is not a ``full-shift-like'' subshift. We show that it roughly corresponds to the dynamical property of having dense periodic points, although we do have to first embed the subshift $X$ into a direct product (a subshift $X$ without periodic points may have trivial automorphism group).

Recall that a group $\mathcal{G}$ is \emph{residually finite} if for all $g \in \mathcal{G} \setminus \{e_{\mathcal{G}}\}$ ($e$ always denotes the identity element), there is homomorphism $\phi : \mathcal{G} \to Q$ to a finite group $Q$, such that $\phi(g) \neq e_Q$. We say a point $x \in X$ in a subshift $X \subset A^G$ is \emph{totally periodic} if its point stabilizer is of finite index in $G$. It is well-known that if $X \subset A^G$ is a subshift with dense totally periodic points, then $\Aut(X)$ is residually finite. We show a kind of converse:

\begin{theorem}
If $G$ is finitely generated and $X \subset A^G$ does not have dense totally periodic points, then the group $\Aut(X \times B^G)$ is not residually finite for $|B| \geq 2$.
\end{theorem}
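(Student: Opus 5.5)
The plan is to show directly that some nontrivial automorphism lies in the kernel of every homomorphism from $\Aut(X \times B^G)$ to a finite group. The first layer $X$ will serve as a "controller", and the second layer $B^G$ as a "workspace" on which we permute symbols.

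First I fix the data. Since totally periodic points are not dense, there is a nonempty cylinder $[w] \cap X$, with $w$ a pattern on a finite domain $D \subset G$, that contains no totally periodic point of $X$. For a finite set $F \supset D$ and a permutation $\pi$ of $B^F$, let $f_\pi$ be the automorphism that reads the $X$-layer and, at every position $g$ where $w$ occurs, applies $\pi$ to the $B$-layer pattern on $gF$. To make $f_\pi$ a well-defined automorphism, I restrict to occurrences that are isolated, meaning no other $w$-occurrence lies within a suitable radius. The $X$-layer is never changed, so these maps commute with one another whenever their active windows are disjoint, and compositions behave like permutation groups. The shift maps $\sigma_h$, $h \in G$, are automorphisms as well; in particular $G$ maps into $\Aut(X\times B^G)$ by $h \mapsto \sigma_h$.

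Next let $\phi : \Aut(X \times B^G) \to Q$ be a homomorphism with $Q$ finite. The kernel of $h \mapsto \phi(\sigma_h)$ is a finite-index subgroup $H \le G$, and since $G$ is finitely generated we may pass to a finite-index normal subgroup. Then $\phi(\sigma_h f \sigma_h^{-1}) = \phi(f)$ for every $f$ and every $h \in H$, so $\phi$ cannot distinguish an automorphism acting at $w$-occurrences from the same automorphism acting at $w$-occurrences translated by $h$. I want to exploit this via commutators. Choose $\pi, \rho \in \Alt(B^F)$ with $[\pi,\rho]$ a nontrivial $3$-cycle. Compare the commutator $[f_\pi, \sigma_h f_\rho \sigma_h^{-1}]$, which $\phi$ maps to $\phi([f_\pi, f_\rho]) = \phi(f_{[\pi,\rho]})$, with the actual automorphism. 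If $h$ is chosen so that an occurrence of $w$ at $g$ and an occurrence at $gh$ have disjoint windows, the two factors act on disjoint regions and commute. Then the commutator is trivial, and hence $\phi(f_{[\pi,\rho]}) = e_Q$. Since $[\pi,\rho] \ne e$, the automorphism $f_{[\pi,\rho]}$ is nontrivial, as $w$ occurs in $X$ and we may place any $B$-pattern underneath. It then lies in every finite-index normal subgroup, which is what we need.

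The main obstacle is making "disjoint windows" true uniformly for a single $h \in H$. The required $h$ depends on $\phi$, but the nontrivial element $f_{[\pi,\rho]}$ must be fixed in advance. Moreover, for a given $h$ some configurations may have $w$ at both $g$ and $gh$ with overlapping effect. Here the absence of $H$-periodic points in $[w]$ enters. By compactness, for each finite-index $H$ there is a finite set $S_H \subset H$ such that no point of $[w]$ agrees with all its $S_H$-translates near the origin. I would first try to redesign $f_\pi$ so that the commutator identity holds exactly rather than by disjointness. The idea is to let the rule at an occurrence of $w$ read an orientation or marker from the $B$-layer, so that the positions $g$ and $gh$ play asymmetric roles, in the style of the Kim--Roush and marker arguments. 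Then express $f_{[\pi,\rho]}$ as a product of boundedly many commutators $[f_{\pi_i}, \sigma_{h} f_{\rho_i}\sigma_h^{-1}]$ whose product telescopes for every $h \in H \setminus \{e\}$ acting without fixed $w$-windows. Making this identity exact on all configurations, including those where $w$-occurrences cluster, is the step I expect to require the most care.
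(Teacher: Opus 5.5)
There is a genuine gap, and it sits at the step you flag yourself. First, conjugating by the shift maps does nothing. Every automorphism commutes with the $G$-action by definition, so $\sigma_h f_\rho \sigma_h^{-1} = f_\rho$; for non-abelian $G$, $\sigma_h$ is not even an automorphism unless $h$ is central. Hence $[f_\pi,\sigma_h f_\rho\sigma_h^{-1}] = f_{[\pi,\rho]}$ for every $h$, and the two factors never act on disjoint regions.

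What you need instead are partial shifts of the $B$-layer alone, $\tau_h(x,y)_g = (x_g, y_{hg})$ as in Definition~\ref{def:pasi}, which are automorphisms for all $h$. With one-cell windows one computes $\tau_h f_{U,\rho}\tau_h^{-1} = f_{h^{-1}U,\rho}$ and $[f_{U,\pi}, f_{h^{-1}U,\rho}] = f_{U\cap h^{-1}U,[\pi,\rho]}$. So your scheme needs $U \cap h^{-1}U = \emptyset$ for some $h$ in an arbitrary finite-index $H$. The hypothesis does not give this. It says no point of $U$ is fixed by all of $H$, not that a single $h\in H$ moves all of $U$ off itself. Take $X$ minimal without periodic points and $U = X$. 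Then $h^{-1}U = U$ for every $h$, $f_{X,\alpha}$ commutes with every partial shift, and your identity just returns $f_{X,[\pi,\rho]}$. Yet the theorem says this element dies in every finite quotient. The marker redesign you sketch to make a telescoping identity exact is therefore the whole content of the proof, and it is not supplied. Separately, insisting on windows $F\supseteq D$ and isolated occurrences can make $f_\pi$ trivial, e.g.\ when every point of $[w]$ has a period in $F^{-1}F$; one-cell windows avoid this.

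The paper never uses translations. It turns the absence of totally periodic points into an equivariant marker structure. The colony process (Lemma~\ref{lem:Stable}) together with Dini's theorem (Lemma~\ref{lem:Eventually}) gives, for each $n$, a continuous shift-commuting partition of $G$ into finite colonies. In it, every colony containing a $U$-occurrence has at least $n$ cells. The fixed element $f = f_{U,\hat\pi}$ then factors as a commuting product of elements $f_i$, one per colony shape $S$ (and set $T$ of $U$-positions). Each $f_i$ lies in a copy of the simple group $\Alt(B^S)$ acting blockwise on colonies of that shape. Choosing $\hat\pi$ with the parity of $|B|$ makes it even in each $\Alt(B^S)$ with $|S|\ge 2$, which covers $|B|=2$. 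Any quotient of order below $n$ kills every $f_i$, hence $f$.

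If you want to keep your partial-shift idea, it can be completed for $|B|\ge 5$ by a global argument instead of a single good $h$. Put $\psi_V(\alpha)=\phi(f_{V,\alpha})$ for $\alpha\in\Alt(B)$. The clopen sets $V$ with $\psi_V$ trivial form an $H$-invariant ideal $\mathcal{I}$, by the commutator identity and perfectness of $\Alt(B)$. Pairwise disjoint clopen sets outside $\mathcal{I}$ give pairwise commuting copies of $\Alt(B)$ in $Q$, so the clopen algebra modulo $\mathcal{I}$ is finite. By Stone duality, $\mathcal{I}$ is then the set of clopen sets avoiding a finite $H$-invariant set of points. Those points are necessarily totally periodic, so $U\in\mathcal{I}$.
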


To emphasize how this can be seen as an ``if and only if'' condition, note that $B^G$ certainly has dense periodic points, so one can also phrase the result as follows: if $G$ is finitely-generated, and $X \subset A^G$ can be decomposed as a direct product with a full shift (on at least two letters), then $\Aut(X)$ is residually finite if and only if $X$ has dense periodic points.

The proof is constructive, in the sense that given a clopen set which does not have totally periodic points, the proof shows how to constructs a particular element $f \in \Aut(X \times B^G)$ which cannot be mapped nontrivially to any finite quotient. In fact, given $n$, we explicitly construct a subgroup of $\Aut(X \times B^G)$ containing $f$, which is a direct product of simple groups all of which are larger than $n$.

The idea of finding large simple groups in automorphism groups (and connections to residual finiteness) is common, and plays a major role for example in \cite{Ho10,FoRuSa24,SaSc25,Sa25b}. We note that of course large simple groups in $\Aut(X)$ do not directly prevent residual finiteness -- it was already shown in \cite{He69,KiRo90} that all finite simple groups embed in $\Aut(A^\Z)$ for any alphabet $A$, but this group is residually finite.

The group $\Aut(X \times B^G)$ contains a isomorphic copy of $G$ when $|B| \geq 2$ (see Definition~\ref{def:pasi} and Lemma~\ref{lem:PartialFG}), so when $G$ is not residually finite, neither is $\Aut(X \times B^G)$ (since the property of residual finiteness is inherited by all subgroups). In other words, our result is mainly interesting when the aperiodicity arises from dynamical properties of $X$, rather than simply $G$ having few finite quotients.

Under some additional properties on $X$, one can replace the full shift $B^G$ by any subshift $Y$ where there are sufficiently many pairwise exchangeable local patterns, see Theorem~\ref{thm:NonRF} for the more general statement.

We now state a question of Coornaert and Ceccherini-Silberstein from \cite{CeCo12}.

\begin{definition}
\label{def:StronglyIrreducible}
A subshift $X \subset A^G$ is \emph{strongly irreducible} if there exists a finite subset of $S \Subset G$ such that whenever $x, y \in X$ and $M, N \subset G$ are such that $SM \cap SN = \emptyset$, there exists some $z \in X$ such that $z|M = x|M$ and $z|N = y|N$ (where $|$ denotes restriction of configurations, seen as functions).
\end{definition}

\begin{question}
Is there a strongly irreducible subshift over $\Z^2$ whose automorphism group is not residually finite?
\end{question}

It was recently shown by Hochman \cite{Ho25} that there exists a strongly irreducible $\Z^2$-subshift which does not have any periodic points. Combining this with our results solves the problem of \cite{CeCo12}:

\begin{theorem}
\label{thm:ContainsZ2}
If $G$ contains a copy of $\Z^2$, then $G$ admits a strongly irreducible subshift whose automorphism group is not residually finite.
\end{theorem}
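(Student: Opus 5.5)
The plan is to combine Hochman's example with the first theorem (non-residual finiteness of $\Aut(X \times B^G)$), and then transfer from $\Z^2$ to a general $G \supseteq \Z^2$ by induction of subshifts.

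\emph{Step 1: the case $G = \Z^2$.} Let $X \subset A^{\Z^2}$ be Hochman's strongly irreducible subshift with no periodic points \cite{Ho25}. Since $X$ has no totally periodic points at all, they are certainly not dense. Taking $|B| \geq 2$, the first theorem then says $\Aut(X \times B^{\Z^2})$ is not residually finite, because $\Z^2$ is finitely generated. We also need $X \times B^{\Z^2}$ to be strongly irreducible. This is immediate: a product of strongly irreducible subshifts is strongly irreducible, using the same set $S$ as for $X$ (the full shift works with any $S$), by gluing coordinatewise.

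\emph{Step 2: passing to $G \geq H \cong \Z^2$.} Write $Z = X \times B^{H}$, viewed as a subshift over the alphabet $A \times B$ on $H$. Define the induced subshift
\[
\tilde Z = \{ x \in (A\times B)^G : \text{for every right coset } Hg,\ \text{the configuration } h \mapsto x_{hg} \text{ lies in } Z \}.
\]
With the action $gx_h = x_{hg}$, left cosets would behave more naturally, and I will adjust the choice of cosets to match the paper's conventions. The set $\tilde Z$ is closed and $G$-invariant. As a $G$-system it is a product of copies of $Z$, one per coset. It is strongly irreducible with the same $S \subset H$: the condition $SM \cap SN = \emptyset$ splits coset by coset, so we can glue independently inside each coset.

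Next, $\Aut(Z)$ embeds into $\Aut(\tilde Z)$. Given $f \in \Aut(Z)$ with local rule on a neighborhood $F \subset H$, apply the same local rule at every $g \in G$ using the neighborhood $Fg$. This acts as $f$ on each coset simultaneously, so it is an injective homomorphism. Since residual finiteness passes to subgroups, $\Aut(\tilde Z)$ is not residually finite.

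\emph{Main obstacle.} All the real content is in the first theorem together with Hochman's construction. The only remaining care is in the induction step: checking that the coset conventions match the right/left action used in the paper, and that the constant-rule extension of $f$ really commutes with the $G$-action and preserves $\tilde Z$. I expect both checks to be routine bookkeeping once the correct side for cosets is fixed.
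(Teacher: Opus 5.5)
Your proposal is correct and follows the paper's proof: take Hochman's strongly irreducible $X$ with no periodic points, apply Theorem~\ref{thm:NonRF} with $Y$ a full shift to $X \times B^{\Z^2}$, and pass to $G$ by the free extension, which is exactly your $\tilde Z$ (the paper writes the condition as $gz|\Z^2 \in Z$ for all $g \in G$, i.e.\ $h \mapsto z_{hg}$ lies in $Z$). Your hedge about cosets is unnecessary, since right cosets $Hg$ are the correct choice for the convention $gx_h = x_{hg}$ ($G$ permutes them by right multiplication); your coset-wise checks of strong irreducibility and of the embedding of $\Aut(Z)$ into $\Aut(\tilde Z)$ supply details the paper simply calls clear.
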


Every strongly irreducibly $\Z$-subshift has dense periodic points \cite{Be86} (also attributed to Weiss in \cite{CeCo12}), so $\Aut(X)$ is residually finite whenever $X$ is a strongly irreducible $\Z$-subshift. We recall a proof in Lemma~\ref{lem:ZSI}.

One natural weakening of residual finiteness is \emph{local embeddability in finite groups} \cite{Ve97}. A group $\mathcal{G}$ has this property if for all $F \Subset \mathcal{G}$, there exists a finite group $Q$ and an injection $\phi : \mathcal{G} \to Q$ such that $\phi(gg') = \phi(g)\phi(g')$ whenever $g, g', gg' \in F$. This is equivalent to saying that for all balls in Cayley graphs of finitely-generated subgroups of $\mathcal{G}$, we can find (graph-)isomorphic balls in Cayley graphs of finite groups.

We show that, although strongly irreducible $\Z^2$-subshifts may not have residually finite automorphism groups, they nevertheless always have this weaker property.

\begin{theorem}
For all strongly irreducible subshifts $X \subset A^{\Z^2}$, $\Aut(X)$ is locally embeddable in finite groups.
\end{theorem}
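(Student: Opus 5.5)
Fix a finite set $F \Subset \Aut(X)$ of automorphisms. The plan is to find a finite group $Q$ and an injection $\phi : F \to Q$ with $\phi(gg') = \phi(g)\phi(g')$ whenever $g, g', gg' \in F$; this suffices, since the map may be extended arbitrarily to the rest of $\Aut(X)$. The natural candidate for $Q$ is the symmetric group on a finite set of ``partially periodic'' configurations. Hochman's example shows that totally periodic points need not exist, so we cannot simply restrict to a periodic sublattice. However, $\Z^2$-SFT-like gluing arguments do give points that are periodic in one direction. Each automorphism commutes with the shift, so it maps points of vertical period $p$ to points of vertical period $p$. Write $X_p$ for the set of such points. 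Then $X_p$ is a nonempty subshift, $\Z$-like in the horizontal direction, with alphabet the columns of height $p$. The automorphisms in $F$ act on $X_p$ as automorphisms of a $\Z$-subshift.

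First, using strong irreducibility with gluing set $S$, I will show two things for all large $p$. First, $X_p$ is nonempty. Second, any pattern on a rectangle of height much smaller than $p$ appears in $X_p$. For this, glue copies of a pattern periodically in the vertical direction; a compactness argument (via the $\Z^2$ strong irreducibility of long horizontal strips) should give a point with exact vertical period $p$. Next, regard $X_p$ as a $\Z$-subshift over the column alphabet $A^{\Z/p\Z}$. It is strongly irreducible as a $\Z$-subshift, with gluing distance independent of $p$, because gluing in $X$ along disjoint vertical half-planes respects vertical periodicity. Here I intend to use the gluing for half-planes invariant under vertical translation, and then apply the uniqueness-free compactness trick to obtain a periodic glued point. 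By Lemma~\ref{lem:ZSI}, $X_p$ then has dense horizontally periodic points, so $X_p$ contains a dense set of totally periodic points. These are not totally periodic points of $X$ in general, since we only need them inside $X_p$. This step is subtle, because vertical periodicity is not automatic under gluing in a strip of bounded width. If it fails, the fallback is to work with the \emph{partial} action on a torus-like finite set of patterns.

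Given such points, fix $p$ and a horizontal period $q$. Let $P_{p,q}$ be the finite set of points of $X$ with stabilizer containing $p\Z e_2 + q\Z e_1$. Each $f \in F$ permutes $P_{p,q}$, so we obtain a homomorphism $\Aut(X) \to \Sym(P_{p,q})$. It remains to show injectivity on the finite set $F F^{-1}$, that is, to find for each $g \neq g'$ in $F$ a point in $P_{p,q}$ on which they differ. Take a point $x$ with $g x_0 \neq g' x_0$. Its restriction to a ball $B_r$, with $r$ larger than the radii of the automorphisms, is a pattern in $X$. By the density established above, for $p, q$ large, some point of $P_{p,q}$ contains this pattern at the origin, and $g$ and $g'$ differ there. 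Hence $\phi$ is injective on $F$ and multiplicative, indeed a genuine homomorphism. This would seem to give residual finiteness, contradicting Theorem~\ref{thm:ContainsZ2}, so the doubly periodic points cannot exist in general. The main obstacle is therefore the second step. The correct version must be the \emph{local} one: we replace $P_{p,q}$ by the finite set of locally admissible $p \times q$ torus patterns (admissible on every window of size $r$), on which elements of $F$ act as permutations, not necessarily through a homomorphism of $\Aut(X)$. I then need to show that composition is respected for products lying in $F$, which holds because local rules compose locally. I also need bijectivity, which I will obtain from the injectivity of $F F^{-1}$ on locally admissible tori, arguing via inverse local rules on a bigger window. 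The crucial input is that by strong irreducibility such tori exist and contain every small pattern; I will prove this by filling a large torus through repeated gluing with the fixed set $S$, closing the gaps since the torus is large relative to $S$.
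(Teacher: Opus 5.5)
Your final plan is the paper's: the local rules of the $f \in F$ act on a finite subshift $Z$ of $L$-periodic points of an SFT approximation $Y \supseteq X$, and injectivity on $F$ and partial multiplicativity are read off from locality. But the step you call the crucial input, that such tori exist and contain every small pattern, is where essentially all of the paper's work lies, and your sketch for it does not work.

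Strong irreducibility only produces points of $X$ in the plane, and these are in general non-periodic. To turn one into a torus you must cut along seams, and a window straddling a seam is admissible only if the contents on the two sides match along the whole seam. On $\Z$ a seam is a single copy of a word, so gluing two copies suffices. On $\Z^2$ a seam is an infinite line, and ``repeated gluing, closing the gaps'' gives no control along it: the wrapped-around data around the last gap need not be the restriction of any point of $X$, so there is nothing to apply strong irreducibility to.

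The missing idea is to periodize one direction at a time, and only inside $Y$. For $p$ on $\llb 0,n-1\rrb^2$, take a horizontally periodic $s$ containing $p$ in the strongly irreducible $\Z$-subshift $X|(\Z \times \llb 0,n-1\rrb)$ (Lemma~\ref{lem:ZSI}). Use block gluing, say with radius $m$, to get $x \in X$ carrying $s$ on rows $\llb 0,n-1\rrb$ and on rows $\llb n+m,2n+m-1\rrb$. Then repeat rows $\llb 0,n+m-1\rrb$ vertically. Every window of height at most $n$ now sees a window of $x$, so the result lies in $Y$ (though not in $X$), inside the $\Z$-SFT of vertically $(n+m)$-periodic points of $Y$. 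There, a pigeonhole on full column blocks between horizontal recurrences of $p$ gives a doubly periodic point of $Y$ containing $p$. Intersecting the finitely many period lattices obtained gives a single $L$.

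Your last paragraph also leaves two steps unjustified. First, the windows must be $\mathcal{B}_{2r}$ rather than of size $r$, since composing two radius-$r$ local rules at a cell reads a $\mathcal{B}_{2r}$-neighbourhood. Second, and more seriously, write $\hat f$ for the map of $A^{\Z^2}$ given by the local rule of $f$. Injectivity of $\hat f$ on the finite set $Z$ makes it a permutation only once you know $\hat f(Z) \subseteq Z$, and this does not follow. If every $\mathcal{B}_R$-window of a torus is an $X$-pattern, its image under $\hat f$ is only guaranteed this for $\mathcal{B}_{R-r}$-windows.

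The paper settles this in one sentence, for any subshift $Z$ with the same $\mathcal{B}_{2r}$-patterns as $X$, and in that generality the claim is false. On $\{0,1\}^{\Z^2}$, consider the row-wise radius-$2$ involution flipping $x_{(i,j)}$ whenever $x_{(i-2,j)}x_{(i-1,j)}x_{(i+1,j)} = 010$. It sends the point all of whose rows are $(0100)^\infty$ to the one with rows $(0110)^\infty$. So the union of the orbit of the former with the orbit of a doubly periodic point containing all $\mathcal{B}_4$-patterns is not invariant.

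For tori of an SFT approximation, invariance is automatic when $X$ is itself an SFT and $R$ is large, since the tori are then $L$-periodic points of $X$. But that is exactly the situation where periodic points are already dense. For non-SFT $X$ this step needs an additional argument, which neither your proposal nor the paper provides.
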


Another further weakening is the property of \emph{soficity} of a group, see \cite{We00}. We omit the definition here, as it is lengthy, but groups that are locally embeddable in finite groups are well-known to be sofic, so we have:

\begin{corollary}
For all strongly irreducible subshifts $X \subset A^{\Z^2}$, $\Aut(X)$ is sofic.
\end{corollary}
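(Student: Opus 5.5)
The corollary follows by combining the preceding theorem with the standard fact that local embeddability in finite groups implies soficity. The preceding theorem gives that $\Aut(X)$ is locally embeddable in finite groups (LEF) for every strongly irreducible $X \subset A^{\Z^2}$. It then remains to recall why LEF groups are sofic. I would use the characterization of soficity through approximate actions: a group $\mathcal{G}$ is sofic if for every finite $F \Subset \mathcal{G}$ and $\epsilon > 0$ there are a finite set $V$ and a map $\sigma : F \to \Sym(V)$ with two properties. First, $\sigma(gh)$ and $\sigma(g)\sigma(h)$ agree on at least $(1-\epsilon)|V|$ points whenever $g,h,gh \in F$. Second, $\sigma(g)$ has at most $\epsilon|V|$ fixed points for $g \in F \setminus \{e\}$.

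Given $F$, enlarge it to contain $e$. The LEF property provides a finite group $Q$ and an injection $\phi : \mathcal{G} \to Q$ that is multiplicative on $F$. Let $V = Q$ and set $\sigma(g)$ to be left multiplication by $\phi(g)$. With this choice both conditions hold exactly rather than approximately. Multiplicativity on $F$ gives $\sigma(gh) = \sigma(g)\sigma(h)$ whenever $g,h,gh \in F$. For $g \neq e$ in $F$, injectivity together with $\phi(e) = \phi(e)\phi(e)$, which forces $\phi(e) = e_Q$, gives $\phi(g) \neq e_Q$. A nontrivial left translation of $Q$ has no fixed points.

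So $\Aut(X)$ is sofic. I expect no real obstacle here; the only care needed is to check that the definitions match, in particular that injectivity of $\phi$ gives freeness of the approximate action on $F \setminus \{e\}$.
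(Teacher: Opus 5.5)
Your proposal is correct and is the paper's argument: the paper also obtains the corollary by combining the local-embeddability theorem (proved for block gluing $\Z^2$-subshifts, a class that contains the strongly irreducible ones) with the well-known fact that LEF groups are sofic, and your left-translation argument on $Q$ just verifies that fact explicitly. One cosmetic point: $\phi$ should be an injection $F \to Q$, not $\mathcal{G} \to Q$, since the latter cannot exist when $\Aut(X)$ is infinite (you inherited this slip from the paper's phrasing of the definition); your argument only ever uses $\phi$ on $F$, so nothing else changes.
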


\begin{question}
Is $\Aut(X)$ locally embeddable in finite groups, or at least sofic, for all strongly irreducible $\Z^3$-subshifts?
\end{question}

In fact, we do not know a counterexample for any acting group, apart from the trivial observation that if $G$ is not locally embeddable in finite groups, neither is $\Aut(A^G)$ (and similarly any non-sofic group would provide a non-sofic example of $\Aut(A^G)$; but at present, no group is known to be non-sofic).

A group that is not residually finite can of course have all its finitely-generated subgroups be residually finite. An extreme example is a locally finite group that is not residually finite, such as the group of all even finite-support permutations of $\N$. Our non-residual-finiteness proof is based on precisely the appearance of large alternating groups in the automorphism group, so it is of interest to realize the proof in a finitely-generated subgroup. Let us say a group is \emph{locally residually finite} if all its finitely-generated subgroups are residually finite.

\begin{theorem}
If $G$ is finitely generated and $X \subset A^G$ does not have dense periodic points the group $\Aut(X \times B^G)$ is not locally residually finite if one of the following holds:
\begin{itemize}
\item $|B| = mn$ with $m \geq 2, n \geq 3$, or
\item $|B| \geq 3$ and $X$ is $\mathcal{B}_1$-aperiodic.
\end{itemize}
\end{theorem}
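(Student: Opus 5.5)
We aim to exhibit a finitely generated subgroup $H \le \Aut(X \times B^G)$ containing an element $f \neq e$ that lies in the kernel of every homomorphism from $H$ to a finite group. The construction will refine the one behind the first theorem. Fix a clopen set $C \subset X$ containing no totally periodic points; such a set exists because periodic points are not dense. Since $G$ is finitely generated, it has only finitely many subgroups of each finite index. We then use compactness: for each $n$ there is a radius $r_n$ such that every $x \in C$ has pairwise distinct $r_n$-patterns at $n$ or more points of its orbit. This is the input from aperiodicity that lets us build \emph{unboundedly large} permutation groups acting locally.

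The finitely generated subgroup is built from finitely many ``partial shifts'' and ``local symbol permutations.'' The partial shifts (Definition~\ref{def:pasi}, Lemma~\ref{lem:PartialFG}) move $B$-layer data along generators of $G$ where $X$ permits; they give a copy of $G$ acting on the $B$-layer. The local symbol permutations act on the $B$-coordinate at positions whose $X$-neighbourhood lies in $C$. In the first case, $|B| = mn$ with $m \ge 2$ and $n \ge 3$, we write $B = \{0,\dots,m-1\} \times \{0,\dots,n-1\}$. Conjugating a fixed $3$-cycle on the $n$-component by partial shifts that move the $m$-component lets us produce, within $H$, commutators realising arbitrary $3$-cycles on the data carried along an orbit segment. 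Under $\mathcal{B}_1$-aperiodicity, $|B| \ge 3$ suffices: the symbols of $X$ alone mark distinct positions, and the $3$-cycles can be assembled directly. We fix one such commutator $f$, nontrivial, supported near $C$.

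The key step is to show that for every $N$, $f$ lies in a subgroup of $H$ of the form $\prod_i \Alt(k_i)$ with all $k_i \ge N$ and $f$ projecting nontrivially to each factor. Given this, a homomorphism $\phi : H \to Q$ with $|Q| < N$ must kill each $\Alt(k_i)$, since these groups are simple of order exceeding $|Q|$. Hence $\phi(f) = e$, and $H$ is not residually finite.

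To build these alternating groups we use the standard trick from the first theorem. For a fixed large radius, the patterns of $x$ at the distinct points found above index a large set $P$. Partial shifts combined with conditional $3$-cycles generate every $3$-cycle on the local data indexed by $P$. The decisive point is that these are products of a \emph{fixed} finite generating set, because $3$-cycles on large sets can be produced by iterated commutators of bounded generators. The product structure is over the finitely many pattern types.

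I expect the main obstacle to be exactly this uniformity. The group generated by the fixed finite set must contain $\Alt(k)$ for unbounded $k$, with $f$ projecting into it nontrivially. The approach I will try first is a ``conveyor'' or commutator-bootstrapping argument in the style of the known results that $\Alt$ groups are generated by $3$-cycles on overlapping windows. This step is what forces the hypotheses $|B| = mn$ or $\mathcal{B}_1$-aperiodicity: an extra ``marker'' component must be available, so that the conditional permutations commute appropriately.
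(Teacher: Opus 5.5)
Your skeleton matches the paper's: a finitely generated group built from partial shifts and conditional permutations, an element $f$ factored through pairwise commuting alternating groups of unbounded degree, and a finite quotient forced to kill all of them. However, the step that carries the content is only announced. You call it the main obstacle and say what you would try, and the ingredients you name would not deliver it.

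The aperiodicity input you extract (distinct $r_n$-patterns at $n$ orbit points of each $x \in C$) gives no disjointness. The regions attached to different occurrences of $C$ overlap. So ``permutations of the local data near each occurrence'' are not well-defined automorphisms, they do not commute across occurrences, and $f$ cannot be split into commuting pieces.

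What is needed is a shift-invariant partition of $G$ into bounded disjoint regions, each with a distinguished brain, uniformly large around occurrences of the aperiodic clopen set, and connected for the second case. This is the colony process and Lemma~\ref{lem:Eventually}. They produce clopen sets $V_{S,T}$ for which the shape $S$ is safe, so that $\pi \mapsto f_{V_{S,T},\pi}$ embeds $\Alt(B^S)$ and $f = f_{U,\hat\pi}$ is the product of its pieces, as in Theorem~\ref{thm:NonRF}. If you meant to import that construction wholesale, you should say so; your description of it (patterns at distinct points indexing a set $P$) is not what it is.

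The uniformity claim, that one fixed finite set generates all these $\Alt(B^S)$, needs three concrete tools you do not supply.
\begin{enumerate}
\item The identity $[f_{U,\pi}, f_{V,\pi'}] = f_{U \cap V, [\pi,\pi']}$, together with conjugation by the full-track shift, bootstraps from the finitely many $f_{[a],\pi}$ to $f_{U,\pi}$ for every clopen $U$ and every $\pi \in \Alt(B)$. This is Lemma~\ref{lem:fupi}, and it uses $|B| \ge 3$.
\item You need a way to couple two positions. For $B \cong B_1 \times B_2$, conjugate an element of $\Alt(B_1 \times B_2)$ acting at a single cell by a partial shift of one factor, which gives $\Alt(B_1^{\{g\}} \times B_2^{\{h\}})$ under $V$. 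As you wrote it, a $3$-cycle acting only on the $n$-component commutes with partial shifts of the $m$-component, so conjugating it yields nothing new. Under $\mathcal{B}_1$-aperiodicity, instead recode so adjacent symbols differ, and condition $\Alt(B^2)$ on the symbol pair at adjacent positions of the connected colony.
\item You need the hypergraph lemma, via Lemmas~\ref{lem:ABC} and~\ref{lem:Derp}. Alternating groups on pairs of coordinates along a connected graph, with all coordinates of size at least $2$ and one of size at least $3$, generate the alternating group of the whole product. It is applied to the complete bipartite graph on the $B_1$- and $B_2$-coordinates of $S$, or to the adjacency graph of the colony.
\end{enumerate}
This is where the hypotheses actually enter. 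You need $m \ge 2$ and $n \ge 3$ for the size conditions, since with all factors of size $2$ only affine maps arise. You need aperiodicity in the second case to address position pairs. The hypotheses are not there to make conditional permutations commute via a marker component.
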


Here and elsewhere, $\mathcal{B}_r$ is the ball of radius $r$ in $G$ for the chosen set of generators, and \emph{$S$-aperiodic} for $S \subset G$ means that for all $s \in S \setminus \{e\}$, $X$ has no points $x$ satisfying $sx = x$.

It seems likely that any alphabet would work in this theorem, even without the $\mathcal{B}_1$-aperiodicity assumption, but already the statements here require some technical work.

We also note the following if and only if statement on the level of groups (Theorem~\ref{thm:ContainsZ2} uses this in the case $G = \Z^2$):

\begin{corollary}
\label{cor:LevelOfGroups}
Let $G$ be a finitely generated group. Let $\mathcal{C}$ be a class of $G$-subshifts closed under direct products with full shifts. Then the following are equivalent:
\begin{itemize}
\item all subshifts in $\mathcal{C}$ have dense totally periodic points,
\item all subshifts in $\mathcal{C}$ have residually finite automorphism group,
\item all subshifts in $\mathcal{C}$ have locally residually finite automorphism group,
\end{itemize}
\end{corollary}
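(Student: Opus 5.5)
We prove the cycle of implications (1) $\Rightarrow$ (2) $\Rightarrow$ (3) $\Rightarrow$ (1).

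\emph{(1) $\Rightarrow$ (2).} This is the well-known fact recalled before the first theorem: dense totally periodic points give a residually finite automorphism group. For completeness, the argument runs as follows. Let $f \in \Aut(X)$ with $f \neq \mathrm{id}$, and pick $x$ with $f(x) \neq x$. By continuity there is an open set $U \ni x$ with $f(U) \cap U = \emptyset$. By density, $U$ contains a totally periodic point $y$. Its orbit $Gy$ is finite, because the stabilizer has finite index. Let $H$ be the intersection of all conjugates of the stabilizer; it is a finite-index normal subgroup, since $G$ is finitely generated. The set $X_H$ of $H$-fixed points is then finite: a point of $X_H$ is determined by its values on a finite set of coset representatives. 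Every automorphism commutes with the $G$-action, so it preserves $X_H$. Restriction therefore gives a homomorphism $\Aut(X) \to \Sym(X_H)$. We have $y \in X_H$ and $f(y) \neq y$, so $f$ survives in a finite quotient.

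\emph{(2) $\Rightarrow$ (3).} This is immediate, since subgroups of residually finite groups are residually finite.

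\emph{(3) $\Rightarrow$ (1).} We argue by contraposition. Suppose some $X \in \mathcal{C}$ does not have dense totally periodic points. By assumption $\mathcal{C}$ is closed under direct products with full shifts. Choose $B$ to satisfy the first bullet of the preceding theorem, for instance $|B| = 6 = 2 \cdot 3$. Then $X \times B^G \in \mathcal{C}$.

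Before invoking that theorem, we check that its hypotheses match ours. It is phrased with ``dense periodic points,'' while we have ``dense totally periodic points.'' We read the two notions as the same: the paper uses ``periodic'' for totally periodic in this context, as the abstract's usage indicates. So we apply the preceding theorem to $X$ with this $B$, using that $G$ is finitely generated. It gives that $\Aut(X \times B^G)$ is not locally residually finite. This contradicts (3).

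The only real content of the proof is this contrapositive step. It amounts to invoking the preceding theorem with a suitable alphabet size, such as $6$ or $12$, and the one obstacle is the terminological match between ``periodic'' and ``totally periodic'' in its hypothesis.
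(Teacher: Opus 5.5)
Your proof is correct and matches the argument the paper intends, which it leaves implicit. You use the standard finite-quotient argument for (1)$\Rightarrow$(2), heredity of residual finiteness to subgroups for (2)$\Rightarrow$(3), and Theorem~\ref{thm:FGNonRF} applied to $X \times B^G$ with $|B| = 6 = 2 \cdot 3$ for (3)$\Rightarrow$(1); your reading of ``periodic'' as ``totally periodic'' there is confirmed by that theorem's proof. As a harmless aside, finite generation of $G$ is not needed in the (1)$\Rightarrow$(2) step: the normal core of a finite-index subgroup always has finite index, and even the stabilizer itself would do, since its fixed-point set is already finite and $\Aut(X)$-invariant.
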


For example, $G$ admits a strongly irreducible subshift (SFT) without dense totally periodic points if and only if it admits a strongly irreducible subshift (SFT) with non-residually-finite automorphism group (see Definition~\ref{def:StronglyIrreducible} and Definition~\ref{def:SFT} for the definitions).



\section{Definitions and conventions}

We have $0 \in \N$. For $k \in \N$, write $\llb k \rrb = \{0, 1, \ldots, k-1\}$ and $\llb m, n\rrb = \{i \in \Z \;|\; m \leq i \leq n\}$. (So $\llb k \rrb = \llb 0, k-1 \rrb$, $\llb 1 \rrb = \emptyset$, and $\llb 0 \rrb$ is not defined.) The permutation groups $\Sym(A)$ and $\Alt(A)$ denote the groups of \emph{finite-support permutations} on the set $A$ (for the most part, $A$ is finite, but in Section~\ref{sec:Graphs} we allow larger sets as well). We write disjoint union as $A \sqcup B$; specifically, this means a union, which we know to be disjoint (so e.g.\ $\{0,1\} \sqcup \{1, 2\}$ carries no meaning and $\{0\} \sqcup \{1, 2\} = \{0\} \cup \{1, 2\}$). (Another meaning to this symbol is given below.) A \emph{word} over an alphabet $A$ is an element of the free monoid on $A$, or a function $\llb n \rrb \to A$ (where $n$ is the \emph{length} of the word, i.e.\ its word norm in the free monoid with generators $A$). For two words $u, v$, we write $u\cdot v$ or just $uv$ for their concatenation, with the obvious definition. We sometimes write $d : A \cong B$ for a bijective function from $A$ to $B$.

We write $S \Subset G$ for $S$ a finite subset of $G$. All group actions are from the left, and are covariant: $g \cdot (h \cdot x) = (gh) \cdot x$. We simply write the action of $g$ on $x$ as $gx$. By $A^B$ we formally mean the set of functions from $B$ to $A$, but we call them \emph{configurations} and use subscripts instead of function application, i.e.\ if $x \in A^B$ we write $x_b$ for $x(b)$, when $b \in B$. Restriction of configurations $x \in A^G$ to subsets $D \subset G$ is denoted $x|D$, specifically $x|D$ is the configuration $y \in A^D$ defined by $y_g = x_g$.

A \emph{pattern} is a configuration $x \in A^B$ whose domain $B$ is finite. In the case $G = \Z$, we identify words of length $n$ with patterns $\llb n \rrb \to A$ with contiguous domains. For two patterns $p \in A^D, q \in A^E$ with disjoint domains, write $t = p \sqcup q$ for the pattern in $A^{D \sqcup E}$ with $t_g = \begin{cases}p_g & \mbox{if } g \in D \\ q_g &\mbox{if } g \in E. \end{cases}$

We discuss our shift convention for a few paragraphs, as there are several conventions in the literature, and both have their strengths and weaknesses. The chosen shift convention here is $gx_h = x_{hg}$, where $g, h \in G$, $x \in A^G$. This is indeed covariant because $gg'x_h = g'x_{hg} = x_{hgg'}$.

We mostly concentrate on finitely-generated groups $G$. A good mental picture of $G$ is then its left Cayley graph with vertices $G$ and edges $(g, sg)$ for $s \in S$ where $S$ a generating set of $G$. In this picture, $x \mapsto gx$ should be thought of as pulling the node $g$ to the origin $e$. Alternatively, one may think of the Cayley graph as having a floating origin, and in $gx$, the origin has moved to $gx$. 

If $C \subset X$ is clopen in a subshift $X \subset A^G$, we say $C$ \emph{appears} or \emph{occurs} $g \in G$ if $gx \in C$, and $g$ is an \emph{occurrence} of $C$. Note that this means that the ``local pattern at $g$ in $x$'' is $C$. Specifically, a clopen set is a finite union of \emph{cylinders} $[p]$ where $p \in A^D$, $D \Subset G$ is a pattern and $[p] = \{x \in X \;|\; x|D = p\}$. A cylinder $[p]$ appears at $g$ if and only if $x_{hg} = gx_h = p_h$ for all $h \in D$, which precisely means that in the left Cayley graph, we are looking at the relative pattern around the position $g$. For a pattern $p \in A^D$, we also use directly the appearance/occurrence terminology, identifying $p$ with $[p]$.

A subshift was already defined in the introduction as a topologically closed $G$-invariant subset of the full shift $A^G$, equivalently this is a subset of $A^G$ defined by a (possibly infinite) set $\mathcal{F}$ of \emph{forbidden patterns} $p \in A^D$ for finite $D \Subset G$, meaning $X = \{x \in A^G \;|\; \forall p \in F: \forall g \in G: gx|D \neq p\}$.

\begin{definition}
\label{def:SFT}
A \emph{subshift of finite type}, or \emph{SFT}, is $X \subset A^G$ such that for some clopen set $C \subset A^G$, we have $X = \{x \in A^G \;|\; \forall g \in G: gx \in C\}$.
\end{definition}

A \emph{conjugacy} between subshifts $X \subset A^G$ and $Y \subset B^G$ is a homeomorphism $\phi : X \to Y$ such that $g \circ \phi = \phi \circ g$ (where group elements are identified by their corresponding shift maps). The automorphism group of a subshift is just the group of its self-conjugacies. A conjugacy is also called a \emph{recoding}. A typical example of a recoding is a \emph{block coding}, where we take $D \Subset G$ with $e \in D$, and define $\phi(x)_g = p \in A^D$ where $p_h = x_{hg}$. This conjugates $X$ with a subshift of $(A^D)^G$.

If $K \subset G$, we say $x \in A^G$ is \emph{$K$-periodic} if $\forall g \in K: gx = x$.

Much of the present paper studies automorphism groups of direct product subshifts $X \times Y$ where $X \subset A^G$, $Y \subset B^G$, and $G$ acts diagonally by $g(x, y) = (gx, gy)$. We can see $X \times Y$ equivalently as a subshift on the group $(A \times B)^G$ by mapping $(x, y) \in X \times Y$ to $z \in (A \times B)^G$ where $z_g = (x_g, y_g)$. This should not cause any confusion.

We use standard terminology from the theory of cellular automata in the context of direct products: we think of $(x, y) \in X \times Y$ as having two \emph{tracks}. The mental image is that the configuration $y$ is ``under'' $x$. One should imagine two left Cayley graphs of $G$ on top of one another, called the \emph{top track} (containing elements from $X$) and the \emph{bottom track} (containing elements from $Y$). In particular, we say that a clopen set $D \subset Y$ \emph{appears under an occurrence of a clopen set $C \subset X$ at $g \in G$} in a configuration $(x, y)$ if $gx \in C$ (i.e.\ $C$ appears at $g$ in $x$) and $gy \in D$ (i.e.\ $D$ appears at $g$ in $y$).

The commutator of $g, g'$ is $[g, h] = g^{-1}h^{-1}gh$. \emph{Left conjugation} of $g$ by $h$ is ${}^hg = hgh^{-1}$, and \emph{right conjugation} is $g^h = h^{-1}gh$. We write also ${}^{-h}g = {}^h(g^{-1})$ and $g^{-h} = (g^{-1})^h$. Left conjugation is a group action, i.e.\ ${}^a ({}^b c) = {}^{ab}c$. The \emph{left cosets} of a subgroup $K \leq G$ are $gK$ for $g \in G$. Two subgroups $H, K \leq G$ \emph{commute} if $hk = kh$ for all $h \in H$, $k \in K$. If $g_1, g_2, \ldots, g_n \in G$, we call the product $g_1 g_2 \cdots g_n$, a \emph{commuting product} if $g_ig_j = g_jg_i$ for all $i, j \in \{1, \ldots, n\}$. We also write this as $\prod_i g_i$.

A \emph{quotient} of a group is a group that appears as its image in some group homomorphism. The homomorphism itself is referred to as a \emph{quotient map}. A group is \emph{simple} if it has no nontrivial quotients (every quotient map goes to the trivial group, or is an isomorphism). Alternating groups on at least five elements are simple. The \emph{normal closure} of an element $g \in G$ of a group is the smallest normal subgroup containing $g$. Concretely, this is the group generated by all conjugates of $g$. Of course, in a simple group, the normal closure is always all of $G$ (unless $g$ is trivial).

The \emph{lower central series} of a group $G$ is defined inductively by $G_1 = [G, G]$, $G_{i+1} = [G, G_i]$, where $[A, B]$ denotes the group generated inside $G$ by commutators $[a, b]$ with $a \in A, b \in B$. We only need that the lower central series of the symmetric group on at least three elements terminates in the corresponding alternating group.

A subset $S \subset G$ is \emph{symmetric} if $\forall s \in S: s^{-1} \in S$. When $G$ is finitely-generated and a symmetric generating set $S \ni e_G$ is clear from context, we say two sets $C, D \subset G$ \emph{touch} if $S_G C \cap D \neq \emptyset$. This is a symmetric reflexive (but not transitive) relation.

For permutation groups we use the usual cycle notation $(a_1 \; a_2 \; \cdots \; a_k)$ to denote the permutation that takes $a_i$ to $a_{i+1}$ for $i < k$ and takes $a_k$ to $a_1$. When the $a_i$ are longer expressions, we use the separator $;$ instead of a space, e.g.\ $(00;\; 01;\; 10;\; 11)$ is a $4$-cycle of words of length $2$. 

If $G$ is a finitely-generated group, we say $B \subset G$ is \emph{right-syndetic} if $SB = G$ for some finite set $S \Subset G$. Note that this means precisely that $B$ is relatively dense in the left Cayley graph, i.e.\ every $g \in G$ is at bounded distance from an element of $B$. The \emph{$S$-interior} of $B \subset G$ is $\{g \in B \;|\; Sg \subset B\}$. If $x \in A^G$, we say $p \in A^D$ \emph{appears syndetically} if the set of positions $K \subset G$ where $p$ occurs is (right) syndetic.

\section{The process}

In this section we describe a process that produces, from a configuration $x \in A^G$, a sequence of increasingly coarsened partitions of a group $G$ together with a distinguished representative in each partition element. This is in spirit a variant of the usual marker lemma in symbolic dynamics. While there are versions of the marker lemma for groups (e.g.\ \cite{Me23}), our interest is not so much in separation of markers, but instead we need to leverage the cardinality of orbits. We give a direct proof of precisely the statement we need, which seems to be somewhat orthogonal to that of \cite{Me23}.

The process can be described as follows: We think of the Cayley graph of the group $G$, with each element $g \in G$ containing the symbol $x_g$ from some point $x \in X$ for a subshift $X$. Initially, each $g \in G$ is considered a singleton ``cell colony'' (inspired by the fact that nodes $g$ are usually called cells in cellular automata theory). According to a set of essentially arbitrary, but deterministic and spatially homogeneous, rules, the cell colonies begin to merge with nearby colonies to form larger colonies. Each colony furthermore has a single brain cell, which commands its merging (and the brain cells play a key role in the application to automorphism group).


The idea is that we take the point of view of the brain cells, and describe their behavior (and thus the behavior of the colonies they command) only based on the relative configuration around them, and the relative configuration of other colonies nearby. As long as the description of the behavior does not use the actual position $g \in G$ of the brain cell, and each brain cell follows the same procedure, we will necessarily obtain a shift-invariant set of colonies in the limit. We only need to ensure that we cannot get stuck forever, and should ensure that the brain cells try everything in their power to break all possible translational symmetries.

We start with a standard observation.

\begin{lemma}
Let $G$ be a finitely-generated group and $X \subset A^G$ be a subshift. Then for all $n$, there exists a clopen set $C_n \subset X$ such that $x \in X$ has orbit of size at most $n$ if and only if $gx \in C_n$ for all $g \in G$.
\end{lemma}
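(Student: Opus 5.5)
The plan is to reduce ``orbit of size at most $n$'' to periodicity under a single finite-index \emph{normal} subgroup. For normal subgroups, periodicity becomes a condition checked locally at every position. The remaining information then sits in a finite pattern.

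First, I use the standard fact that a finitely generated group has only finitely many subgroups of index at most $n$. A subgroup $K$ of index $k \leq n$ is the stabilizer of a point in the action of $G$ on $G/K$. This gives a homomorphism $G \to \Sym(\llb k \rrb)$. Since $G$ is finitely generated, there are only finitely many such homomorphisms, as each is determined by the images of the generators.

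Let $N$ be the intersection of all subgroups of index at most $n$. Then $N$ is a finite-index normal subgroup, and it is finitely generated, say by $k_1, \ldots, k_r$. If $x$ has orbit of size at most $n$, its stabilizer has index at most $n$, so $x$ is $N$-periodic.

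Second, I make $N$-periodicity local. Under the convention $kx_h = x_{hk}$, the point $x$ is $N$-periodic if and only if $x_{gk} = x_g$ for all $g \in G$ and $k \in N$. By normality, $gk = k'g$ with $k' = gkg^{-1} \in N$, and $k \mapsto gkg^{-1}$ is a bijection of $N$. So this is equivalent to $x_{kg} = x_g$ for all $g \in G$ and $k \in N$. Since $N$ is generated by the $k_j$, it suffices to require $x_{k_j g} = x_g$ and $x_{k_j^{-1} g} = x_g$ for all $j$ and $g$.

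Now $(gx)_{k_j} = x_{k_j g}$ and $(gx)_e = x_g$. Hence with
\[ D = \{ y \in X \;|\; \forall j: y_{k_j} = y_{k_j^{-1}} = y_e \}, \]
which is clopen, $x$ is $N$-periodic if and only if $gx \in D$ for all $g \in G$.

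Third, I cut out the correct orbit sizes. Fix a finite set $T$ of representatives of the cosets $Ng$. An $N$-periodic point is determined by its restriction to $T$. Let $P \subset A^T$ be the finite set of patterns whose $N$-periodic extension lies in $X$ and has orbit of size at most $n$. Define
\[ C_n = D \cap \{ y \in X \;|\; y|T \in P \}, \]
which is clopen.

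Both directions of the equivalence then follow.
\begin{itemize}
\item If $x$ has orbit of size at most $n$, then so does every $gx$. Each $gx$ is $N$-periodic, so $gx \in D$, and $gx|T \in P$. Hence $gx \in C_n$ for all $g$.
\item Conversely, if $gx \in C_n$ for all $g$, then in particular $gx \in D$ for all $g$, so $x$ is $N$-periodic. The condition $x|T \in P$ at $g = e$ then forces $x$ to equal the $N$-periodic extension of a pattern in $P$. Thus $x$ has orbit of size at most $n$.
\end{itemize}

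The main obstacle is the second step. A naive local test using the generators of the stabilizer fails, because with the convention $gx_h = x_{hg}$ the relation $kx = x$ compares $x_g$ with $x_{gk}$. That is right multiplication, whereas local patterns only see the left-translated positions $x_{hg}$. Passing to the normal core $N$ is exactly what turns the right-translate condition into a left-translate one.
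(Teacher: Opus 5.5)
Your proof is correct and follows the same route as the paper. Both use the fact that there are only finitely many subgroups of index at most $n$, so the relevant points form a finite subshift, which is then cut out by a clopen condition checked at every position. The one difference is that the paper simply cites the well-known fact that finite subshifts on finitely generated groups are of finite type, while you prove it in this instance. You do so via the finite-index normal subgroup $N$, whose periodicity is locally checkable, plus a transversal $T$ to select the allowed $N$-periodic points. Two steps you leave implicit are standard: $N$ is normal because the family of subgroups of index at most $n$ is closed under conjugation, and it is finitely generated by Schreier's lemma.
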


\begin{proof}
It suffices to show this for $X = A^G$ (if $C$ is the clopen set obtained for $A^G$, for $X$ we can use $C \cap X$). The points in $A^G$ whose orbit has size at most $n$ have point stabilizer of index at most $n$. There are finitely many such subgroups. Thus, the set of points whose orbit has size at most $n$ is a finite subshift of $X$. It is well-known that a finite subshift on a finitely-generated group is of finite type, equivalently it is the set of points whose orbit stays in a particular clopen set $C$, and we can take $C_n = C$.
\end{proof}

Formally, we now consider partitions of (or equivalence relations on) a set $G$ with a choice of representatives in the partition elements, and with finite partition elements. To correspond to the mental image we have, we will refer to equivalence classes as \emph{cell colonies} (or just \emph{colonies}), and to the representatives as \emph{brain cell} (or just \emph{brains}) in each of them, and at discrete time steps some of the brains will yield control of their colony to another brain nearby (i.e.\ at a bounded distance, or number of edges away in the left Cayley graph). For short, we refer to a partition of $G$ with a choice of representatives as a \emph{culture} on $G$.

We now put a topology on the set of cultures of $G$, when $G$ is a finitely-generated group. For this, we code a culture in $(2 \times \mathcal{P}_{\mathrm{fin}}(G))^G$, by using the bit $b \in 2$ at $g$ to store whether the current node is a brain, and $p \in \mathcal{P}(G)$ encodes the relative shape of colony the current node $g$ belongs to, meaning $g$ is in relation precisely with the nodes $pg$. On $\mathcal{P}_{\mathrm{fin}}(G)$ we use the discrete topology and on a set of the form $A^G$ we use the product topology, so the encodings live in the Baire space. We will in practice only work with cultures where only finitely many sets in $\mathcal{P}_{\mathrm{fin}}(G)$ are used, equivalently the colonies are bounded in the left Cayley graph of $G$. This means that we are really working with a countable union of Cantor spaces
\[ \bigcup_{P \Subset \mathcal{P}_{\mathrm{fin}}} (2 \times P)^G \]
with the final topology.

Let us axiomatize the set of points of $(2 \times \mathcal{P}_{\mathrm{fin}}(G))^G$ corresponding to (codings of) cultures: if $(x, y) \in 2^G \times \mathcal{P}_{\mathrm{fin}}(G)^G$, we want 
\begin{enumerate}
\item $|\{y_g \;|\; g \in G\}| < \infty$,
\item $\forall g \in G: e \in y_G$,
\item $\forall h, g: h \in y_g \implies h^{-1} \in y_{hg}$,
\item $\forall h, g: h \in y_g \wedge k \in y_{hg} \implies kh \in y_g$,
\item $\forall g: \exists h: h \in y_g \wedge x_{hg} = 1$.
\item $\forall h, g: h \in y_g \wedge x_g = 1 \implies h = e \vee x_{hg} = 0$.
\end{enumerate}

There is an action of $G$ on cultures: we simply shift the configuration $(x, y)$ coding a culture according to the usual shift convention, to obtain a new coding $(gx, gy)$ of a culture. Directly in terms of cultures, if $z$ is a culture where $S$ is a colony with brain $h$, then $gz$ is a colony where $Sg^{-1}$ is a colony with brain $hg^{-1}$.

\textbf{The process.} Given $x \in A^G$, we now describe a process, which we that describes a sequence of cultures on $x$ evolving in time, such that -- at least hopefully -- all colonies will eventually grow (by absorbing other colonies or merging into other colonies). We first describe \emph{the basic process}, which can be modified to obtain additional properties.

A key technical notion here is \emph{merging} of colonies. Suppose we have specified a (possibly infinite) set of colonies $(C_1^i, C_2^i)$ in a culture $z$. Then we can form a new equivalence relation, by adding to the existing equivalence relation all pairs $(c_1, c_2)$ with $c_1 \in C_1^i$ and $c_2 \in C_2^i$, and taking the transitive closure. This means that every partition element of the form $C_1^i$ or $C_2^j$ disappears, and such sets become part of new larger colonies. If every new colony contains only one colony of the form $C_2^j$, we say the merge is \emph{good}. In this case, we obtain a new culture, where the brain of every new colony is the brain of the $C_2^j$ colony it contains.

We think of $C_2^i$-colonies as \emph{absorbing} the colonies $C_1^i$. In turn we say $C_1^i$ \emph{fuses} into $C_2^i$. We use \emph{merge} as a neutral term for this action. We also apply the terminologies to cells $g \in G$, understanding that the terms then refer to the colonies containing the $g$.

\begin{lemma}
\label{lem:Merging}
Suppose $z$ is a culture, and $(C_1^i, C_2^i)_{i \in \N}$ is a sequence of pairs of colonies such that $C_1^i$ touches $C_2^j$. Suppose that
\begin{itemize}
\item $C_1^i \neq C_2^j$ for all $i, j$, and
\item $C_1^i \neq C_1^j$ whenever $i \neq j$.
\end{itemize}
Then the corresponding merge is good.
\end{lemma}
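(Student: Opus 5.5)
The plan is to study the merge through an auxiliary graph whose vertices are the colonies. For each $i$ I put a directed edge $C_1^i \to C_2^i$. The new colonies are then exactly the unions of colonies lying in one connected component of the underlying undirected graph. Colonies not appearing in any pair form isolated vertices and stay as they were; such a colony contains no $C_2^j$, but it keeps its original brain, so it causes no problem. Goodness therefore means: every connected component containing at least one edge contains exactly one vertex of the form $C_2^j$. Here "vertex of the form $C_2^j$" means a colony equal to $C_2^j$ for some $j$, counted as a set, so repeated indices with $C_2^i = C_2^j$ are harmless.

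The first step is to check out-degrees. The second hypothesis says that each colony equals $C_1^i$ for at most one $i$, so each vertex has out-degree at most one. The first hypothesis says that no colony is both of the form $C_1^i$ and of the form $C_2^j$. Hence every vertex with an outgoing edge has no incoming edge, and every vertex with an incoming edge (a $C_2$-colony) has out-degree zero. So the graph is bipartite, with all edges going from $C_1$-vertices to $C_2$-vertices, and every path has length at most one.

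Next I describe the components. Take a component containing an edge; it contains at least one $C_2$-vertex, namely the target of that edge. Any vertex of the component is either a $C_2$-vertex $D$ or a $C_1$-vertex, and a $C_1$-vertex has exactly one out-neighbour, which is a $C_2$-vertex. An undirected path between two distinct $C_2$-vertices $D \neq D'$ must alternate between $C_2$-vertices and $C_1$-vertices, because $C_2$-vertices have no edges among themselves. Such a path would therefore need some $C_1$-vertex adjacent to two distinct $C_2$-vertices, i.e.\ a $C_1$-colony with out-degree two. That contradicts the out-degree bound, so each component is a star: one $C_2$-colony together with the $C_1$-colonies fusing into it.

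The last step, and the only one needing care, is transferring this to the transitive closure of the equivalence relation. Two cells end up equivalent exactly when their colonies are joined by a finite chain of added pairs, i.e.\ when they lie in the same graph component. This holds because the new relation is the transitive closure of the old partition together with pairs $(c_1,c_2)$ for $c_1 \in C_1^i$, $c_2 \in C_2^i$. The star structure then gives exactly one $C_2$-colony per new colony, so the merge is good. New colonies are finite, each being a union of one colony with finitely or infinitely many fusing colonies. Finiteness of the star is not needed for goodness as defined, though bounded sizes would follow if the pairs came from a touching relation with bounded colonies. The touching hypothesis itself is not needed for goodness.
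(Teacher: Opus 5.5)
Your combinatorial argument is the same as the paper's and is correct. Both use the bipartite colony graph in which each $C_1$-vertex has exactly one edge, so every component is a star $C_2^j \cup \bigcup\{C_1^i \mid C_2^i = C_2^j\}$. Your handling of untouched colonies and of repeated $C_2$'s is also fine.

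The gap is in your last paragraph. The sentence ``New colonies are finite, each being a union of one colony with finitely or infinitely many fusing colonies'' contradicts itself. Your claim that the touching hypothesis is not needed misses what the paper uses it for.

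The definition of a good merge is followed by the assertion that one then obtains a new culture. A culture must have finite colonies, in fact uniformly bounded ones, since axiom 1 allows only finitely many shapes $y_g$. Without touching this fails. Take infinitely many pairs $(C_1^i, C_2)$ sharing one $C_2$, with the $C_1^i$ distinct and different from $C_2$. These satisfy both bulleted hypotheses and produce an infinite ``colony''.

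The paper's proof uses the touching hypothesis to close this, and you should too:
\begin{itemize}
\item Since $S_G$ is symmetric, each $C_1^i$ fusing into $C_2$ meets the finite set $S_G C_2$.
\item Old colonies are disjoint, so only finitely many can fuse into $C_2$.
\item If all colonies of $z$ have diameter at most $d$, each new colony has diameter at most $3d+2$. Hence its size is at most $|\mathcal{B}_{3d+2}|$, and only finitely many shapes occur.
\end{itemize}
This is your own remark that ``bounded sizes would follow'', except that the lemma's hypotheses already supply it: $z$ is a culture and $C_1^i$ touches $C_2^i$. It is also needed, because this lemma is what guarantees that each step of the process ``indeed produces a new culture''.
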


The conditions can equivalently be stated as follows: colonies $C_1^j$ that fuse into others do not themselves absorb any colonies, no pair is listed twice, and no colony is absorbed into two colonies.

\begin{proof}
Consider the graph where nodes are colonies that appear in the pairs. By the first item, this graph is bipartite, with ``left nodes'' thes ones that appear as $C_1^i$ and ``right nodes'' the ones that appear as $C_2^j$. Directing the edges left-to-right, by the second item, the in-degree is $1$, which makes it clear that the new colonies are precisely the sets of the form
\[ C_2^j \cup \bigcup \{C_1^i \;|\; i \in \N, C_1^j = C_2^j\}, \]
thus they all have exactly one absorbing $C_2^j$-cell. We also see that each $C_1^i$ fused into $C_2^j$ touches $C_2^j$, which gives a bound on the diameter of the new colonies, so the colonies are bounded, as required in the definition of a culture.
\end{proof}

We start with the discrete culture, where each node $g \in G$ is the brain of its own singleton colony $\{g\}$. First, we may perform some shift-commuting preprocessing steps, where some of the colonies merge.

A common one is to perform an \emph{$S$-merge at each occurrence of a clopen set $C$}. To define this, we define an auxiliary notion that will also be of importance later.

\begin{definition}
\label{def:Safety}
Let $X \subset A^G$ be a subshift and $C \subset X$ clopen. We say $S \subset G$ is \emph{safe} for $C$ if $s^{-1}s' C \cap C = \emptyset$ for any $s, s' \in S$ with $s \neq s'$.
\end{definition}

We will only perform $S$-merges at every occurrence of $C$ when $e \in S \Subset G$ and $S$ is safe for $C$, and the definition of such a step is that those $g$ satisfying $gx \in C$, have their colonies fused with those of all $sg$. Note that such merges are shift-invariant (meaning the result of applying the merges to $x$ is the same as applying them to $gx$, and then shifting the resulting culture by $g^{-1}$).

The point of the safety property here is that there is no ``unintended'' additional merging of colonies (i.e.\ the assumptions of Lemma~\ref{lem:Merging} hold): If $h$ is fused with the colonies of two different brains $g, g'$ in this step, then $h = sg = s'g'$ for $s, s' \in S$ implying both $g'x$ and $gx = s^{-1}s'g'x$ are in $C$, meaning $s^{-1}sC \cap C \neq \emptyset$. This cannot happen if $S$ is safe for $C$.

After the possible preprocessing steps, the main part of the process begins on the resulting culture. There is a lot of freedom in the process, specifically an infinite amount of choices of how to order the various possible steps (and the main conclusion Lemma~\ref{lem:Stable} is valid in any case), but we make it deterministic artificially by fixing an ordering, and this is important in Lemma~\ref{lem:Eventually}.

The process always runs forever, in infinitely many steps, and we describe these steps in what follows. First, pick any total order on $A^{B_r}$ for all $r$, for example a lexicographic order for some total orders on $A$ and $B_r$. We refer to these as the \emph{pattern orders} (one for each $r$). No consistency whatsoever is needed for these orders. 

Then, we let $(k, h, p)$ range over all quadruples where $k \in \N$, $h \in G$, and $p \in A^{B_r}$ for some $r \in \N$. For each such triple in some order, we perform a \emph{step}, which in turn consists of the following two \emph{substeps}:
\begin{enumerate}
\item If $g$ is a brain cell whose colony is of size precisely $k$, $hg$ is contained in a strictly larger colony, and the two colonies touch, then the two colonies merge, with the brain of the colony of $hg$ becoming the brain of the new colony.
\item If $g$ and $hg$ are brains, $x_{kg} = p_k$ for all $k \in \mathcal{B}_r$, and $k \mapsto x_{khg}$ in turn is a pattern higher in pattern order of $A^{B_r}$, and the two colonies touch, then the colonies merge, with $hg$ becoming the brain of the new colony.
\end{enumerate}

Note that in the first item, $g$ needs to be a brain, while $hg$ need not. In the second, we require that both are brains. In both cases, the colony of $hg$ absorbs that of $g$. Merges described in the first item are called \emph{cardinality merges}, and those in the second item are called \emph{pattern merges}.

We say the same in formulas: in a cardinality merge, we apply Lemma~\ref{lem:Merging} with pairs $(C_1^i, C_2^i)$ such that, if $g$ is the brain of $C_1^i$, we have
\[ hg \in C_2^i \wedge |C_1^i| = k \wedge |C_2^i| > k, \]
and in a pattern merge, we apply Lemma~\ref{lem:Merging} with pairs $(C_1^i, C_2^i)$ such that, letting $g$ be the brain of $C_1^i$, $hg$ is the brain of $C_2^i$, and
\[  hg x|\mathcal{B}_r < gx|\mathcal{B}_r = p. \]

It is easy to see that the assumptions of Lemma~\ref{lem:Merging} hold in both cases. In the case of a cardinality merge, each $C_2^j$ is larger than $k$ and each $C_1^j$ has size $k$, so the first assumption of the lemma holds. The second assumption holds because for a colony of the form $C_1^i$, $C_2^i$ is precisely the colony of $hg$, where $g$ is the brain of $C_1^i$. For pattern merges, the argument is similar but easier.

We conclude that each step (consisting of the two substeps) indeed produces a new culture. This concludes the description of the basic process.

We consider mainly two variants of basic the process. The \emph{cardinality process} works exactly as the process, but we skip all the pattern merges. We still range over triples $(k, h, p)$, so that each cardinality $k$ and direction $h$ is considered infinitely many times. 


It is easy to see that the function that produces the culture after the $n$th step of any of the processes is continuous and shift-commuting -- it is defined by a shift-invariant process that can be followed at each $g \in G$ separately, with all elements agreeing on the outcomes. Although we always continue the process infinitely, it may happen that after some step, no further mergings happen. We then say we have reached a \emph{stable culture}.

\begin{example}
We show an example of a possible result from running the basic process on the group $\Z^2$, starting with a periodic configuration. We consider a $\Z^2$-configuration with periods $(6, 0)$ and $(0, 5)$ with the initial culture of all cells forming singleton colonies. We draw the left Cayley graph (which is also the right Cayley graph) of $\Z^2$ in standard orientation and with the standard generators, with $(1,0)$ pointing to the right and $(0,1)$ upward. We mark the brains with blue dots, their colonies with thick borders, and we use shades of gray to mark $0,1,2$ ($0$ is white, $2$ is dark gray). An $18$-by-$8$ area is shown
\begin{center}
\input{config1}
\end{center}
We first perform a pattern merge with pattern $(0,0) \mapsto 0$ in direction $(0, 1)$. Thus, each brain $g$ checks if $(0,1)+g$ is a brain with a lexicographically larger pattern around it (i.e.\, contains symbol $1$ or $2$), and if so, $g$ fuses into $(0,1) + g$:
\begin{center}
\input{config2}
\end{center}
Next, we perform a cardinality merge in direction $(1, 1)$ with cardinality $1$. Thus, each brain $g$ with colony of size 1 fuses into the colony of of $(1,1) + g$ if the colony containing $g + (1,1)$ is strictly larger than the colony of $g$:
\begin{center}
\input{config3}
\end{center}
Applying a random sequence of cardinality merges, we reach the following stable culture (of course, many results are possible depending on the choices of orders in the definition of the process):
\begin{center}
\begin{tikzpicture}[scale=0.4]
\fill[black!26!white] (0,0) rectangle (1,1);
\fill[black!52!white] (0,1) rectangle (1,2);
\fill[white] (0,2) rectangle (1,3);
\fill[white] (0,3) rectangle (1,4);
\fill[black!26!white] (0,4) rectangle (1,5);
\fill[black!52!white] (0,5) rectangle (1,6);
\fill[white] (0,6) rectangle (1,7);
\fill[white] (0,7) rectangle (1,8);
\fill[white] (1,0) rectangle (2,1);
\fill[white] (1,1) rectangle (2,2);
\fill[black!52!white] (1,2) rectangle (2,3);
\fill[white] (1,3) rectangle (2,4);
\fill[white] (1,4) rectangle (2,5);
\fill[white] (1,5) rectangle (2,6);
\fill[black!52!white] (1,6) rectangle (2,7);
\fill[white] (1,7) rectangle (2,8);
\fill[white] (2,0) rectangle (3,1);
\fill[black!26!white] (2,1) rectangle (3,2);
\draw[fill,blue] (2.5,1.5) circle (0.14);
\fill[black!26!white] (2,2) rectangle (3,3);
\fill[black!52!white] (2,3) rectangle (3,4);
\fill[white] (2,4) rectangle (3,5);
\fill[black!26!white] (2,5) rectangle (3,6);
\draw[fill,blue] (2.5,5.5) circle (0.14);
\fill[black!26!white] (2,6) rectangle (3,7);
\fill[black!52!white] (2,7) rectangle (3,8);
\fill[white] (3,0) rectangle (4,1);
\fill[white] (3,1) rectangle (4,2);
\fill[black!26!white] (3,2) rectangle (4,3);
\fill[black!52!white] (3,3) rectangle (4,4);
\fill[white] (3,4) rectangle (4,5);
\fill[white] (3,5) rectangle (4,6);
\fill[black!26!white] (3,6) rectangle (4,7);
\fill[black!52!white] (3,7) rectangle (4,8);
\fill[white] (4,0) rectangle (5,1);
\fill[black!26!white] (4,1) rectangle (5,2);
\fill[white] (4,2) rectangle (5,3);
\fill[black!26!white] (4,3) rectangle (5,4);
\fill[white] (4,4) rectangle (5,5);
\fill[black!26!white] (4,5) rectangle (5,6);
\fill[white] (4,6) rectangle (5,7);
\fill[black!26!white] (4,7) rectangle (5,8);
\fill[black!26!white] (5,0) rectangle (6,1);
\fill[white] (5,1) rectangle (6,2);
\fill[white] (5,2) rectangle (6,3);
\fill[white] (5,3) rectangle (6,4);
\fill[black!26!white] (5,4) rectangle (6,5);
\fill[white] (5,5) rectangle (6,6);
\fill[white] (5,6) rectangle (6,7);
\fill[white] (5,7) rectangle (6,8);
\fill[black!26!white] (6,0) rectangle (7,1);
\fill[black!52!white] (6,1) rectangle (7,2);
\fill[white] (6,2) rectangle (7,3);
\fill[white] (6,3) rectangle (7,4);
\fill[black!26!white] (6,4) rectangle (7,5);
\fill[black!52!white] (6,5) rectangle (7,6);
\fill[white] (6,6) rectangle (7,7);
\fill[white] (6,7) rectangle (7,8);
\fill[white] (7,0) rectangle (8,1);
\fill[white] (7,1) rectangle (8,2);
\fill[black!52!white] (7,2) rectangle (8,3);
\fill[white] (7,3) rectangle (8,4);
\fill[white] (7,4) rectangle (8,5);
\fill[white] (7,5) rectangle (8,6);
\fill[black!52!white] (7,6) rectangle (8,7);
\fill[white] (7,7) rectangle (8,8);
\fill[white] (8,0) rectangle (9,1);
\fill[black!26!white] (8,1) rectangle (9,2);
\draw[fill,blue] (8.5,1.5) circle (0.14);
\fill[black!26!white] (8,2) rectangle (9,3);
\fill[black!52!white] (8,3) rectangle (9,4);
\fill[white] (8,4) rectangle (9,5);
\fill[black!26!white] (8,5) rectangle (9,6);
\draw[fill,blue] (8.5,5.5) circle (0.14);
\fill[black!26!white] (8,6) rectangle (9,7);
\fill[black!52!white] (8,7) rectangle (9,8);
\fill[white] (9,0) rectangle (10,1);
\fill[white] (9,1) rectangle (10,2);
\fill[black!26!white] (9,2) rectangle (10,3);
\fill[black!52!white] (9,3) rectangle (10,4);
\fill[white] (9,4) rectangle (10,5);
\fill[white] (9,5) rectangle (10,6);
\fill[black!26!white] (9,6) rectangle (10,7);
\fill[black!52!white] (9,7) rectangle (10,8);
\fill[white] (10,0) rectangle (11,1);
\fill[black!26!white] (10,1) rectangle (11,2);
\fill[white] (10,2) rectangle (11,3);
\fill[black!26!white] (10,3) rectangle (11,4);
\fill[white] (10,4) rectangle (11,5);
\fill[black!26!white] (10,5) rectangle (11,6);
\fill[white] (10,6) rectangle (11,7);
\fill[black!26!white] (10,7) rectangle (11,8);
\fill[black!26!white] (11,0) rectangle (12,1);
\fill[white] (11,1) rectangle (12,2);
\fill[white] (11,2) rectangle (12,3);
\fill[white] (11,3) rectangle (12,4);
\fill[black!26!white] (11,4) rectangle (12,5);
\fill[white] (11,5) rectangle (12,6);
\fill[white] (11,6) rectangle (12,7);
\fill[white] (11,7) rectangle (12,8);
\fill[black!26!white] (12,0) rectangle (13,1);
\fill[black!52!white] (12,1) rectangle (13,2);
\fill[white] (12,2) rectangle (13,3);
\fill[white] (12,3) rectangle (13,4);
\fill[black!26!white] (12,4) rectangle (13,5);
\fill[black!52!white] (12,5) rectangle (13,6);
\fill[white] (12,6) rectangle (13,7);
\fill[white] (12,7) rectangle (13,8);
\fill[white] (13,0) rectangle (14,1);
\fill[white] (13,1) rectangle (14,2);
\fill[black!52!white] (13,2) rectangle (14,3);
\fill[white] (13,3) rectangle (14,4);
\fill[white] (13,4) rectangle (14,5);
\fill[white] (13,5) rectangle (14,6);
\fill[black!52!white] (13,6) rectangle (14,7);
\fill[white] (13,7) rectangle (14,8);
\fill[white] (14,0) rectangle (15,1);
\fill[black!26!white] (14,1) rectangle (15,2);
\draw[fill,blue] (14.5,1.5) circle (0.14);
\fill[black!26!white] (14,2) rectangle (15,3);
\fill[black!52!white] (14,3) rectangle (15,4);
\fill[white] (14,4) rectangle (15,5);
\fill[black!26!white] (14,5) rectangle (15,6);
\draw[fill,blue] (14.5,5.5) circle (0.14);
\fill[black!26!white] (14,6) rectangle (15,7);
\fill[black!52!white] (14,7) rectangle (15,8);
\fill[white] (15,0) rectangle (16,1);
\fill[white] (15,1) rectangle (16,2);
\fill[black!26!white] (15,2) rectangle (16,3);
\fill[black!52!white] (15,3) rectangle (16,4);
\fill[white] (15,4) rectangle (16,5);
\fill[white] (15,5) rectangle (16,6);
\fill[black!26!white] (15,6) rectangle (16,7);
\fill[black!52!white] (15,7) rectangle (16,8);
\fill[white] (16,0) rectangle (17,1);
\fill[black!26!white] (16,1) rectangle (17,2);
\fill[white] (16,2) rectangle (17,3);
\fill[black!26!white] (16,3) rectangle (17,4);
\fill[white] (16,4) rectangle (17,5);
\fill[black!26!white] (16,5) rectangle (17,6);
\fill[white] (16,6) rectangle (17,7);
\fill[black!26!white] (16,7) rectangle (17,8);
\fill[black!26!white] (17,0) rectangle (18,1);
\fill[white] (17,1) rectangle (18,2);
\fill[white] (17,2) rectangle (18,3);
\fill[white] (17,3) rectangle (18,4);
\fill[black!26!white] (17,4) rectangle (18,5);
\fill[white] (17,5) rectangle (18,6);
\fill[white] (17,6) rectangle (18,7);
\fill[white] (17,7) rectangle (18,8);
\draw[line width=3] (0,1) -- (1,1);
\draw[line width=3] (0,1) -- (1,1);
\draw[line width=3] (0,5) -- (1,5);
\draw[line width=3] (0,5) -- (1,5);
\draw[line width=3] (1,1) -- (2,1);
\draw[line width=3] (2,1) -- (2,2);
\draw[line width=3] (1,1) -- (2,1);
\draw[line width=3] (1,5) -- (2,5);
\draw[line width=3] (2,5) -- (2,6);
\draw[line width=3] (1,5) -- (2,5);
\draw[line width=3] (3,0) -- (3,1);
\draw[line width=3] (2,1) -- (2,2);
\draw[line width=3] (3,1) -- (3,2);
\draw[line width=3] (2,2) -- (3,2);
\draw[line width=3] (2,2) -- (3,2);
\draw[line width=3] (3,4) -- (3,5);
\draw[line width=3] (2,5) -- (2,6);
\draw[line width=3] (3,5) -- (3,6);
\draw[line width=3] (2,6) -- (3,6);
\draw[line width=3] (2,6) -- (3,6);
\draw[line width=3] (3,0) -- (3,1);
\draw[line width=3] (3,0) -- (4,0);
\draw[line width=3] (3,1) -- (3,2);
\draw[line width=3] (3,4) -- (4,4);
\draw[line width=3] (3,4) -- (3,5);
\draw[line width=3] (3,4) -- (4,4);
\draw[line width=3] (3,5) -- (3,6);
\draw[line width=3] (3,8) -- (4,8);
\draw[line width=3] (5,0) -- (5,1);
\draw[line width=3] (4,0) -- (5,0);
\draw[line width=3] (5,1) -- (5,2);
\draw[line width=3] (5,2) -- (5,3);
\draw[line width=3] (5,3) -- (5,4);
\draw[line width=3] (4,4) -- (5,4);
\draw[line width=3] (5,4) -- (5,5);
\draw[line width=3] (4,4) -- (5,4);
\draw[line width=3] (5,5) -- (5,6);
\draw[line width=3] (5,6) -- (5,7);
\draw[line width=3] (5,7) -- (5,8);
\draw[line width=3] (4,8) -- (5,8);
\draw[line width=3] (5,0) -- (5,1);
\draw[line width=3] (5,1) -- (6,1);
\draw[line width=3] (5,1) -- (5,2);
\draw[line width=3] (5,1) -- (6,1);
\draw[line width=3] (5,2) -- (5,3);
\draw[line width=3] (5,3) -- (5,4);
\draw[line width=3] (5,4) -- (5,5);
\draw[line width=3] (5,5) -- (6,5);
\draw[line width=3] (5,5) -- (5,6);
\draw[line width=3] (5,5) -- (6,5);
\draw[line width=3] (5,6) -- (5,7);
\draw[line width=3] (5,7) -- (5,8);
\draw[line width=3] (6,1) -- (7,1);
\draw[line width=3] (6,1) -- (7,1);
\draw[line width=3] (6,5) -- (7,5);
\draw[line width=3] (6,5) -- (7,5);
\draw[line width=3] (7,1) -- (8,1);
\draw[line width=3] (8,1) -- (8,2);
\draw[line width=3] (7,1) -- (8,1);
\draw[line width=3] (7,5) -- (8,5);
\draw[line width=3] (8,5) -- (8,6);
\draw[line width=3] (7,5) -- (8,5);
\draw[line width=3] (9,0) -- (9,1);
\draw[line width=3] (8,1) -- (8,2);
\draw[line width=3] (9,1) -- (9,2);
\draw[line width=3] (8,2) -- (9,2);
\draw[line width=3] (8,2) -- (9,2);
\draw[line width=3] (9,4) -- (9,5);
\draw[line width=3] (8,5) -- (8,6);
\draw[line width=3] (9,5) -- (9,6);
\draw[line width=3] (8,6) -- (9,6);
\draw[line width=3] (8,6) -- (9,6);
\draw[line width=3] (9,0) -- (9,1);
\draw[line width=3] (9,0) -- (10,0);
\draw[line width=3] (9,1) -- (9,2);
\draw[line width=3] (9,4) -- (10,4);
\draw[line width=3] (9,4) -- (9,5);
\draw[line width=3] (9,4) -- (10,4);
\draw[line width=3] (9,5) -- (9,6);
\draw[line width=3] (9,8) -- (10,8);
\draw[line width=3] (11,0) -- (11,1);
\draw[line width=3] (10,0) -- (11,0);
\draw[line width=3] (11,1) -- (11,2);
\draw[line width=3] (11,2) -- (11,3);
\draw[line width=3] (11,3) -- (11,4);
\draw[line width=3] (10,4) -- (11,4);
\draw[line width=3] (11,4) -- (11,5);
\draw[line width=3] (10,4) -- (11,4);
\draw[line width=3] (11,5) -- (11,6);
\draw[line width=3] (11,6) -- (11,7);
\draw[line width=3] (11,7) -- (11,8);
\draw[line width=3] (10,8) -- (11,8);
\draw[line width=3] (11,0) -- (11,1);
\draw[line width=3] (11,1) -- (12,1);
\draw[line width=3] (11,1) -- (11,2);
\draw[line width=3] (11,1) -- (12,1);
\draw[line width=3] (11,2) -- (11,3);
\draw[line width=3] (11,3) -- (11,4);
\draw[line width=3] (11,4) -- (11,5);
\draw[line width=3] (11,5) -- (12,5);
\draw[line width=3] (11,5) -- (11,6);
\draw[line width=3] (11,5) -- (12,5);
\draw[line width=3] (11,6) -- (11,7);
\draw[line width=3] (11,7) -- (11,8);
\draw[line width=3] (12,1) -- (13,1);
\draw[line width=3] (12,1) -- (13,1);
\draw[line width=3] (12,5) -- (13,5);
\draw[line width=3] (12,5) -- (13,5);
\draw[line width=3] (13,1) -- (14,1);
\draw[line width=3] (14,1) -- (14,2);
\draw[line width=3] (13,1) -- (14,1);
\draw[line width=3] (13,5) -- (14,5);
\draw[line width=3] (14,5) -- (14,6);
\draw[line width=3] (13,5) -- (14,5);
\draw[line width=3] (15,0) -- (15,1);
\draw[line width=3] (14,1) -- (14,2);
\draw[line width=3] (15,1) -- (15,2);
\draw[line width=3] (14,2) -- (15,2);
\draw[line width=3] (14,2) -- (15,2);
\draw[line width=3] (15,4) -- (15,5);
\draw[line width=3] (14,5) -- (14,6);
\draw[line width=3] (15,5) -- (15,6);
\draw[line width=3] (14,6) -- (15,6);
\draw[line width=3] (14,6) -- (15,6);
\draw[line width=3] (15,0) -- (15,1);
\draw[line width=3] (15,0) -- (16,0);
\draw[line width=3] (15,1) -- (15,2);
\draw[line width=3] (15,4) -- (16,4);
\draw[line width=3] (15,4) -- (15,5);
\draw[line width=3] (15,4) -- (16,4);
\draw[line width=3] (15,5) -- (15,6);
\draw[line width=3] (15,8) -- (16,8);
\draw[line width=3] (17,0) -- (17,1);
\draw[line width=3] (16,0) -- (17,0);
\draw[line width=3] (17,1) -- (17,2);
\draw[line width=3] (17,2) -- (17,3);
\draw[line width=3] (17,3) -- (17,4);
\draw[line width=3] (16,4) -- (17,4);
\draw[line width=3] (17,4) -- (17,5);
\draw[line width=3] (16,4) -- (17,4);
\draw[line width=3] (17,5) -- (17,6);
\draw[line width=3] (17,6) -- (17,7);
\draw[line width=3] (17,7) -- (17,8);
\draw[line width=3] (16,8) -- (17,8);
\draw[line width=3] (17,0) -- (17,1);
\draw[line width=3] (17,1) -- (18,1);
\draw[line width=3] (17,1) -- (17,2);
\draw[line width=3] (17,1) -- (18,1);
\draw[line width=3] (17,2) -- (17,3);
\draw[line width=3] (17,3) -- (17,4);
\draw[line width=3] (17,4) -- (17,5);
\draw[line width=3] (17,5) -- (18,5);
\draw[line width=3] (17,5) -- (17,6);
\draw[line width=3] (17,5) -- (18,5);
\draw[line width=3] (17,6) -- (17,7);
\draw[line width=3] (17,7) -- (17,8);
\end{tikzpicture}

\end{center}

Note that this can be seen as an instance of the basic process, where pattern merges simply did not match anything. We make some remarks about the resulting culture. As expected, we see precisely $6$ brains, since the period repeats $6$ times. Indeed, we reached a connected set of coset representatives (i.e.\ connected fundamental domain) for the period lattice $6\Z \times 5\Z$. Note that the brains happen to have symbol $1$, despite $1 < 2$. Since we only performed one pattern merge, actually the process did not (yet) distinguish between the symbols $1$ and $2$. If the pattern had more translational symmetries after projecting $1$ and $2$ to a single symbol, we would never reach a set of coset representatives without additional pattern merges. \qee
\end{example}

Let us now unfix the fixed configuration $x$, and begin to analyze the global properties of the process.

\begin{lemma}
\label{lem:Stable}
If we follow the process starting with $x \in A^G$ (possibly with some shift-commuting preprocessing steps), and the size of the colony of some $g \in G$ does not tend to infinity, then
\begin{enumerate}
\item mergings only happen on finitely many steps, and we end up with a stable culture $(y, z) \in (2 \times \mathcal{P}_{\mathrm{fin}}(G))^G$;
\item there is a subgroup $K$ of finite index $n$ such that if $y_g = 1$, then $gy_h = 1$ precisely when $h \in K$;
\item there is a connected set of coset representatives $R$ for $K$, such that when $gy_h = 1$, we have $gz_h = R$;
\item $x$ is $K$-periodic.
\end{enumerate}
\end{lemma}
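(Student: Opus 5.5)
The plan is to exploit three facts about the process: colony sizes only grow, every pair (cardinality $k$, direction $h$) is revisited infinitely often, and the whole evolution commutes with the shift. Let $C_t(g)$ denote the colony of $g$ after step $t$. These colonies are nested in $t$. Since $|C_t(g)|$ does not tend to infinity, it is eventually constant, say equal to $k$ from some time $t_0$ on, and then $C_t(g) = C$ is fixed for $t \geq t_0$.

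\textbf{Step 1: every colony touching $C$ has size exactly $k$ and is eventually fixed.} Let $D$ be a colony touching $C$ at some time $t \geq t_0$. Suppose first that $|D| > k$. At the next step with parameters $(k, h)$, where $h$ moves the brain of $C$ into $D$, the colony $C$ fuses into a strictly larger colony, contradicting stability. So no colony touching $C$ is ever larger than $k$. Suppose instead that $D$ has size $j < k$ from some time on. At a later step with parameters $(j, h)$, where $h$ moves the brain of $D$ into $C$, the colony $D$ is absorbed into $C$, again a contradiction. Hence all colonies touching $C$ eventually have size exactly $k$, and they can never grow, since growing beyond $k$ is forbidden. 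The Cayley graph is connected, so induction along paths shows that every colony of $G$ eventually reaches size $k$ and stops changing (each at its own time).

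\textbf{Step 2: brains carry identical patterns.} Let $g$ and $h'g$ be brains of stabilized touching colonies. If the patterns $gx|\mathcal{B}_r$ and $h'gx|\mathcal{B}_r$ differed for some $r$, a later pattern merge with parameters $(h', p)$, or its reverse, would merge the two colonies. This is impossible, so $gx = h'gx$. Chaining along touching colonies, all brains $b$ have the same point $bx$. Fix a brain $g_0$ and set $K = \mathrm{stab}(g_0 x)$. Since the process commutes with the shift, the culture computed from $g_0x$ is invariant under $K$. So if $b$ is a brain, so is every $kb$ with $k \in K$, and conversely any two brains $b, b'$ satisfy $b'b^{-1}$-type relations placing them in one coset. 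In the $g_0$-shifted picture this gives item 2: $g_0 y_h = 1$ exactly when $h \in K$.

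\textbf{Step 3: finite index, the shape $R$, and item 1.} By $K$-invariance, the colony of a brain $\kappa$ (in shifted coordinates) is $R\kappa$, where $R$ is the colony of $e$. These colonies partition $G$ and each has $k$ elements. Hence $R$ is a set of coset representatives and $[G:K] = k$, which gives item 2 with $n = k$. Item 3 follows from the shift-invariance of the coding: shifting to a brain's position makes its stored shape equal to $R$. Connectedness of $R$ comes from an induction over merges: the initial preprocessing $S$-merges produce connected colonies, assuming $S$ is connected (which I would check or impose), and every later merge joins two touching colonies. The point $x$ is $K$-conjugate-periodic, and in the statement's normalization (taking $g_0 = e$ or conjugating) this is item 4.

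For item 1, the cultures at all times are invariant under the finite-index group $K$. So modulo $K$ there are only finitely many colonies, each of which stabilizes by Step 1. Taking the maximum of their stabilization times gives a uniform time after which nothing merges.

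\textbf{Main obstacle.} I expect the main difficulty to be the circularity in Step 1. Stabilization is obtained colony by colony, with no uniform bound, and the uniform statement (item 1) only follows after periodicity has been obtained. I also expect careful bookkeeping to be needed about which brain the direction $h$ is measured from, and about the left/right conventions when identifying $K$ with a stabilizer.
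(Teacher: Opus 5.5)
Your proposal is correct and follows essentially the same route as the paper: a stabilized colony of size $n$ forces all colonies to stabilize at size $n$; pattern merges along chains of touching colonies force all brains $b$ to have the same point $bx$; and shift-invariance then gives $K$, the coset representatives $R$, the index $n$, and the uniform stabilization time. Your normalization (translating so that $e$ is a brain) is exactly what the paper does to make items 2 and 4 refer to the same $K$. Your caveat that connectedness of $R$ needs the preprocessing merges to produce connected colonies is a point the paper passes over.
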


\begin{proof}
Suppose that the size of the colony containing $g' \in G$ does not tend to infinity. In particular, this means that some brain $g'$ never fuses into another colony, in particular there is a brain (rather than just a cell) at some $g \in G$ whose colony is finite, say of size $n$, in the limit.

We first observe that no other colony may ever grow to a size higher than $n$. For then its size would be greater than $n$ after some finite step, and eventually we would consider a triple $(k, h, p)$ where $hg$ belongs to this larger colony, and it would want to merge with $g$. Furthermore, there must exist such a pair such that the colonies touch, for some $g, h$ (follow the path from $g$ to $hg$ to find such a pair). So eventually, $g$ would also fuse into a larger class. The same logic shows that in fact all nodes end up in colonies of size precisely $n$ in the limit.

Now translate the point so that the identity $e_G$ contains a brain. Suppose that there is another brain at $g \in G$ in the limit. Then in fact $x = gx$: if this is not the case, then $x|\mathcal{B}_r \neq gx|\mathcal{B}_r$ for some $r$, and thus one of these patterns would be larger than the other in the total order on patterns, and there must exist such a pair where the colonies touch (by following any path from $e_G$ to $g$, and considering neighboring colonies when the colony changes along the path). Eventually we will consider triples $(k, g, p)$ where $p = x|B_r$, and $(k, g^{-1}, p)$ where $p = gx|B_r$, for some $k \in \N$, and the colonies would merge.

If $K$ is the point stabilizer of $x$, then we conclude that brains can only appear on $K$ in the limit. But of course, every $g \in K$ indeed contains a brain in the limit, because the process is shift-invariant. Furthermore, again by shift-commutation of the process, these brain are already formed and reach their final colony at the step when the brain at $e$ reaches its final colony. We conclude that no mergings can happen after this finite step.

At this step, like on every step, every $g \in G$ is in some colony. If $R$ is the colony of the identity, then by shift-invariance, the colony of $g \in K$ is precisely $Rg$. Furthermore, these colonies are disjoint for distinct $g \in K$. This means precisely that $R$ is a set of left representatives for the subgroup $K$. In particular, $K$ has index $n$.

Note that since at every point in time, the colonies stay connected (because we only merge touching sets), $R$ is a connected set of left coset representatives, as claimed.
\end{proof}

When $G$ is a free group, a connected set of coset representatives is called a \emph{Schreier traversal}.

\begin{remark}
One might want a further step in the process, where the brains move inside their colony according to some logic. For example, if $x \in A^{F_2}$ is a configuration whose point stabilizer is precisely $\langle a \rangle$ for one of the free generators $a$ of $F_2$, the process might tend to a culture with brains at $\langle a \rangle$; or brains (and their pairwise distances) might tend to infinity. Both cases are possible in the present version of the process, and we might want control over which of these cases is preferred. It would also be of interest to clarify the relation with the usual marker lemmas, in particular that of \cite{Me25}. However, these issues are beyond the scope of the present paper, and in particular moving brains by any logic would add complications in the previous lemma.
\end{remark}

Of course, if brains controlling large areas pop up eventually, then in a compact set, they pop up after a bounded number of steps. This is expressed formally by the following lemmas. 
If $S \Subset G$, the \emph{$S$-count} of a set $T \subset G$ is the number of $g \in G$ such that $Sg \subset T$. (Possibly $\infty$.)

\begin{lemma}
Let $G$ be a finitely-generated group, suppose $e \in S \Subset G$, and that $X \subset A^G$ a subshift. 
Let $C$ be a clopen set, and suppose $S$ is safe for it. Then there is a preprocessing step for the process such that applying it to any $x \in X$, we reach a culture where all colonies have a positive $S$-count, and for every occurrence $g$ of $C$ the colony of $g$ contains $Sg$.
\end{lemma}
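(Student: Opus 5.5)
The plan is to build the preprocessing step in two stages. The first stage takes care of occurrences of $C$. The second stage lets every remaining colony absorb a translate of $S$, using only shift-commuting merges of touching colonies, so that Lemma~\ref{lem:Merging} applies at each stage and the result is again a culture. Throughout I take the generating set large enough that $S \subset \mathcal{B}_1$; then each $Sg$ is connected to $g$, so colonies stay connected and bounded.

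\emph{Stage one.} Start from the discrete culture and perform an $S$-merge at each occurrence of $C$. Since $S$ is safe for $C$, the discussion after Definition~\ref{def:Safety} shows that the hypotheses of Lemma~\ref{lem:Merging} hold. The merge is therefore good, and for every $g$ with $gx \in C$ the colony of $g$ is exactly $Sg$, with brain $g$. Each such colony has $S$-count at least $1$, witnessed by $g$. Every later merge only enlarges colonies, so the requirement that the colony of an occurrence $g$ contain $Sg$ is preserved from here on. The cells not covered by any $Sg$ are still singletons.

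\emph{Stage two.} Here the singleton colonies, and any other colonies of $S$-count $0$, must acquire a translate of $S$. I would run a fixed finite number $N$ of steps of the basic process, viewed as a further shift-commuting preprocessing. In addition, I would interleave an extra rule that is a variant of the cardinality merge: a brain whose colony has $S$-count $0$ fuses into a touching colony of positive $S$-count, preferring the neighbour $hg$ with $h$ smallest in a fixed enumeration of $\mathcal{B}_1$. This rule is good by Lemma~\ref{lem:Merging}, because absorbed colonies (count $0$) are never absorbing ones (positive count), and each absorbed colony chooses a unique target. Positive $S$-count is monotone under merging, since a colony containing $Sg$ still contains it after growing.

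The choice of $N$ comes from compactness. The set of $x$ for which, after $n$ steps, some colony near the origin still has $S$-count $0$ is clopen. Lemma~\ref{lem:Stable} says that colony sizes tend to infinity unless $x$ is periodic with a connected fundamental domain. Once a colony is large and connected it contains a ball of radius $\mathrm{diam}(S)$, and so a translate of $S$. By the standard compactness argument (the lemma anticipated just before the $S$-count definition), a single uniform $N$ then works for all $x \in X$.

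\emph{Main obstacle.} The hard step is the case where the process stabilises, that is, the periodic case of Lemma~\ref{lem:Stable}. There the colonies are fundamental domains $R$ of a finite-index stabiliser $K$, and I must check that $R$ contains some $Sg$. I would try first to use the freedom in the order of merges, or equivalently in the choice of coset representatives, to make $R$ contain a translate of $S$ whenever the index of $K$ is at least $|S|$. I would then argue that the points of small orbit (those whose whole orbit lies in the clopen set $C_{|S|}$ of the first lemma of this section) are exactly where this can fail. This periodic regime is where I expect the real difficulty, and it is where any hidden hypothesis on $X$, $C$, or $S$ would have to be used.
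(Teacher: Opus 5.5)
Your Stage one is the same as the paper's first step. Stage two, however, has a genuine gap, and the ``hidden hypothesis'' you anticipate is not the one you guess.

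The step ``once a colony is large and connected it contains a ball of radius $\mathrm{diam}(S)$'' is false: colonies can grow without bound while staying thin. Concretely, on $G=\Z^2$ let $S=\{(0,0),(1,0)\}$ and let $x$ have symbols depending only on the second coordinate, aperiodically. Any shift-commuting procedure turns $x$ into a culture invariant under translation by $t=(1,0)$. If a colony $R$ with brain $b$ contained $h$ and $h+t$, then the colony $R-t$ would meet $R$. So $R-t=R$ and $b-t=b$, which is absurd. Hence every colony meets every row at most once and never has positive $S$-count. Yet colony sizes do tend to infinity, by Lemma~\ref{lem:Stable}, since $x$ is not totally periodic.

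So there is no pointwise convergence for your compactness argument to upgrade. The obstruction is also not small orbits: this $x$ has infinite orbit, and a point with period lattice $\Z\times m\Z$ has orbit size $m$, as large as you like. The obstruction is periods in $S^{-1}S\setminus\{e\}$. In fact, for $|S|\geq 2$ the statement as printed fails whenever $X$ has a fixed point, since a $G$-invariant culture must be the discrete one. The paper's proof explicitly uses that $X$ is $S^{-1}S$-aperiodic, which is the hypothesis of Lemma~\ref{lem:Eventually}, where this preprocessing is used. Note also that the paper's logic runs opposite to yours. Positive $S$-count is secured first, here, and only afterwards does superadditivity of $S$-counts under merging convert size growth into $S$-count growth.

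The missing idea is to use this aperiodicity to create translates of $S$ directly, by a marker-type greedy construction with no limit argument.

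First, recode $X$ (by a block code on a large enough ball, using compactness) so that $x_g\neq x_{sg}$ for all $s\in S^{-1}S\setminus\{e\}$.

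Then go through the symbols in descending order, and let each cell $g$ carrying the current symbol absorb $Sg$, provided no cell of $Sg$ has been merged before. If $sg=s'g'$ with $g\neq g'$, then $g=s^{-1}s'g'$ with $s^{-1}s'\in S^{-1}S\setminus\{e\}$, so $x_g\neq x_{g'}$. Hence two cells with the same symbol never have overlapping translates, and these merges are good. Afterwards every non-singleton colony has $S$-count at least $1$. Every cell is at bounded distance from a non-singleton colony, because if $g$ did not absorb $Sg$, some cell of $Sg$ was already merged.

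Finally run the cardinality process. A cardinality merge never fuses two singletons, so non-singleton colonies keep positive $S$-count. Since singletons lie at bounded distance from non-singleton colonies, all of them are absorbed after a bounded number of steps.
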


\begin{proof}
If $|S| = 1$, then $S = \{e\}$, and we can use a trivial preprocessing step where nothing happens -- the discrete culture already satisfies the conclusion.

Otherwise, the preprocessing is as follows: We immediately perform $S$-merges at each occurrence of $C$. This is possible since we assume $S$ is safe for $C$. Then no matter what happens after, for every occurrence $g$ of $C$ the colony of $g$ contains $Sg$. So it suffices to show that we can also ensure that all colonies have a positive $S$-count.

Next, since $X$ is $S^{-1} S$-aperiodic, we may recode $C$ so that $x_g \neq x_{sg}$ whenever $s \in S^{-1}S \setminus \{e\}$. If $g$ has a maximal symbol, and no merges were yet performed nearby (i.e.\ $C$ does not appear nearby), then we absorb the (singleton colonies of all) cells in $Sg$ (noting that this does not trigger any further merges coming from transitivity, again by $S^{-1}S$-aperiodicity of $X$).

We then proceed with symbols in descending order, and if the symbol at $g$ is maximal in a sufficiently large local neighborhood, it absorbs the cells in $Sg$. Specifically, we want that this does not trigger any further merges, and this happens as long as we only merge at symbols where none of the nodes $Sg$ have merged previously. We observe that at this point, every colony that is not a singleton has $S$-count $1$, and such colonies are right-syndetic in $G$. For the latter claim, if $g$ is not merged at this point, then this is because there is a larger symbol nearby that did merge.

After this, we begin the cardinality process. We claim that eventually all colonies that are not singletons have positive $S$-count. To see this, we observe that by induction, after any number of steps, it stays true that a non-singleton $g$ will have positive $S$-count, since no two singleton colonies will ever merge in a cardinality merge. In particular, once a singleton colony $g$ fuses into another colony, it will have positive $S$-count. Since there is a non-singleton colony at bounded distance, this happens after a bounded number of steps.
\end{proof}

\begin{lemma}
\label{lem:Eventually}
Let $G$ be a finitely-generated group, suppose $S \Subset G$, and that $X \subset A^G$ a subshift which is $S^{-1}S$-aperiodic. Suppose further that $C \subset X$ is a closed set such that no point in $C$ is totally periodic and $S$ is safe for $C$. Then then there is a preprocessing step for the process such that the following holds: for all $r, n$, there exists $t$ such that if we run the process on any $x \in C$ for $t$ steps (after the preprocessing step), then every $g$ such that $gx \in C$ has all of $Sg$ in its colony, and whenever $hgx \in C$, the colony of $hg$ has $S$-count of size at least $n$.
\end{lemma}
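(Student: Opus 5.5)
The preprocessing step will be the one given by the previous lemma: perform $S$-merges at every occurrence of $C$, then the symbol-ordered absorptions that make every colony have positive $S$-count. Such a step is available because $S$ is safe for $C$ and $X$ is $S^{-1}S$-aperiodic. After this, every occurrence $g$ of $C$ has $Sg$ inside its colony. Later merges only enlarge colonies, so this part of the conclusion persists for all $t$. It remains to obtain the uniform bound on $S$-counts, which I will do by a compactness argument combined with Lemma~\ref{lem:Stable}.

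First I fix a single $x \in C$ and show that the colony of $e$, under the process on $x$, eventually has $S$-count at least $n$. Suppose it does not. The $S$-count of a colony is at most its size, and colonies only grow, so there are two cases. If the size of the colony of $e$ stays bounded, then Lemma~\ref{lem:Stable} shows that $x$ is $K$-periodic for a finite-index subgroup $K$. That contradicts the assumption that no point of $C$ is totally periodic.

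So the size tends to infinity while the $S$-count stays bounded. This is the main obstacle. Colonies only grow by absorbing whole colonies, and after preprocessing each colony has positive $S$-count. I will argue that $S$-count is superadditive under merging: if $Sg \subset T_1$ and $Sg' \subset T_2$ with $T_1, T_2$ disjoint, then $g \neq g'$ and both are counted in $T_1 \cup T_2$. Hence the $S$-count of a colony is at least the number of preprocessed colonies it contains. Preprocessed colonies have bounded size, because they are built from $S$-neighbourhoods, and this bound is uniform by shift-invariance. So unbounded size forces the number of constituent colonies, and therefore the $S$-count, to tend to infinity. If superadditivity fails to hold cleanly, I would instead check that the count is at least the number of non-singleton preprocessed colonies contained, and that singletons fuse quickly into non-singletons.

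Next I pass to uniformity. For fixed $t$, the culture after $t$ steps depends continuously on $x$ and commutes with the shift. So the set $U_t$ of points $x \in C$ for which the colony of $e$ has $S$-count at least $n$ after $t$ steps is open, since it is determined by a finite window. The sets $U_t$ increase with $t$ and, by the previous step, cover $C$. Since $C$ is closed in the compact space $X$, a single $t$ works for all of $C$. Finally, take any $x \in C$ and any $hg$ with $hgx \in C$. By shift-commutation, the colony of $hg$ in the run on $x$ is the translate of the colony of $e$ in the run on $hgx$. Its $S$-count is therefore at least $n$, with the same $t$. This $t$ depends only on $n$; the parameter $r$ plays no further role.
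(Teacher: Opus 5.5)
Your proof is correct and is essentially the paper's argument. It uses the same preprocessing, the same superadditivity of the $S$-count under merges, Lemma~\ref{lem:Stable} to rule out bounded colonies, and the same compactness step: your increasing relatively open cover $(U_t)$ of $C$ is Dini's theorem in disguise. The only cosmetic difference is that you bound the colony of $e$ and transfer the bound to occurrences $hg$ by shift-commutation, whereas the paper takes a minimum over colonies of cells in $\mathcal{B}_r$.

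As in the paper, your continuity claim for the preprocessed process tacitly requires $C$ to be clopen (as in the previous lemma), not merely closed.
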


\begin{proof}
We use the preprocessing from the previous lemma. So every colony already has positive $S$-count in all of $G$, for any $x \in X$. We observe that whenever two colonies merge, the $S$-count of the resulting colony is at least the sum of $S$-counts of the two colonies. Thus, the $S$-count of a colony tends to infinity if and only if the size of the colony tends to infinity.

Let $f_t'(x)$ be the minimal $S$-count of a colony of a cell in $\mathcal{B}_r$, when running the process for $t$ steps on $x \in X$. This tends to infinity for all $x \in C$: Since $C$ has no totally periodic points, the sizes of colonies must grow indefinitely by Lemma~\ref{lem:Stable}. In particular, their $S$-counts tend to infinity by the previous paragraph.

Then the functions $f_i(x) = 1/f_i'(x)$ are a pointwise decreasing sequence of continuous functions tending to the zero function on the clopen set $C$. Since the zero function is continuous, by Dini's theorem the convergence is uniform on the compact set $C$. This means precisely that for some $t$, $f_t'(x) \geq n$ for all $x \in C$. In other words, if we run the process for $t$ steps on any $x \in C$, the colonies of all $g \in \mathcal{B}_r$ have $S$-count at least $n$.
\end{proof}

\begin{example}
Possibly the $S^{-1}S$-aperiodicity assumption on $X$ can be removed, and it suffices that $C$ has no totally periodic points and $S$ is safe for it. However, this would certainly require a more complicated argument. We outline how even the case $G = \Z^2$ can be problematic.

Consider the subshift $X \subset \{0,1,2\}^{\Z^2}$ where the forbidden patterns state that no two $2$s may appear vertically or horizontally adjacent. Consider the clopen set $C = [p]$ where $p \in \{2\}^{\{(0,0)\}}$. This clopen set is $S$-safe for $S = \{(0,0), (1,0), (0,1)\}$. Consider now a configuration $x$ where $2$s appear precisely on $\{0\} \times 2\Z$, and which is periodic under the action of $\{0\} \times 2\Z$. Certainly, any such configuration is in $X$.

In the preprocessing set, we would certainly want to perform $S$-merges at every $C$-occurrence. When applied to $x$, the cells in $(\{0,1\} \times 2\Z) \cup (\{0\} \times \Z)$ are now in non-singleton colonies, and other cells are in singleton colonies. We now explain why it is possible to continue the process so as to make all $S$-counts tend to infinity, and in turn so that they do not tend to infinity.

First, let us explain how to achieve this if we use a process that does not use all possible triples $(k, h, p)$ and need not alternate cardinality and pattern merges. To have the $S$-counts tend to infinity, first perform a cardinality merge with $h = (0,-1)$ so that the non-singleton colonies are precisely the boxes $\{2n,2n+1\} \times \{0,1\}$. Then keep repeating cardinality merges only in directions $(1,0)$ and $(-1,0)$ with $k = 1$. The boxes become elongated rectangles, which clearly have large and larger $S$-counts.

To have the number of $S$-counts not tend to infinity. Use pattern matches, and organize the content of $x$ so that the non-singleton colonies always form a thick column $\llb r, r \rrb \times \Z$, and we always match the pattern only at the frontier of this column. We ensure that the vertical parity of a position is always visible in a finite neighborhood. Then we can ensure that we never fuse two vertically adjacent cells into the same colony at the ``frontier'' of the initial column of boxes that keeps expanding (for example, when expanding to the left, even cells can be fused to the right, and odd cells in direction $(1, d)$ for suitably chosen $d \in \Z$). Then clearly the $S$-counts do not increase.

To achieve the same with the existing process where all $(k,h,p)$ are cycled through should certainly be possible, but working through the details would be an exercise in futility.

On an abelian group, it should not be difficult to avoid these issues and ensure that the $S$-aperiodicity is transported everywhere. On a general group, the issue is more serious. \qee
\end{example}

\begin{remark}
The author invented this process to prove the marker lemma for the paper \cite{Sa12b} (this was on an abelian group, where one can use e.g.\ the arguments from \cite{PoSa24} to expand the set of brains, as soon as they form a sparse enough set). Once the author realized they could just use the standard marker lemma in \cite{Sa12b}, this idea was buried, but it turned out to be precisely what we needed for the present situation.
\end{remark}

\section{Permutations under a clopen set}

In this section, we define the subgroup of $\Aut(X \times Y)$ which is not going to be residually finite when $X$ does not have dense periodic points, and $X, Y$ have some additional dynamical properties related to aperiodicity of $X$, and exchangeable patterns in $Y$. Furthermore, these are interlinked so that when $Y$ is a full shift, the aperiodicity assumptions on $X$ become trivial.

Let $Y \subset B^G$ be a subshift. A set of patterns with the same domain are \emph{exchangeable} if in any context, one can be replaced by another without introducing any forbidden patterns. In a formula, $P \subset A^D$ is exchangeable if for all $x \in B^{G \setminus D}$, $x \sqcup p \in Y \iff x \sqcup q \in Y$ for all $p, q \in P$. Exchangeability is an equivalence relation, which we denote by $\equiv$, so for each $D$ we have a partition of $Y|D$ into maximal exchangeable sets. We say a permutation of $Y|D$ \emph{respects} this equivalence relation, if it stabilizes every equivalence class, and we write the group of permutations that respect it as $\Sym_\equiv(Y|D)$. Write $\Alt_\equiv(Y|D)$ for the permutations that are even in each equivalence class separately (note that this is a proper subgroup of $\Alt(Y|D) \cap \Sym_\equiv(Y|D)$ unless there is only one nontrivial equivalence class).

Usually, we want $Y$ to have large exchangeability classes. We say a subshift $Y$ has the \emph{local many fillings property} of \emph{LMFP} if for some $D \Subset G$, the exchangeability class of every pattern in $Y|D$ has cardinality at least $2$.

\begin{lemma}
\label{lem:LMFP}
If $Y$ has the LMFP, then for all $n$, for some $D \Subset G$, the exchangeability class of every pattern in $Y|D$ has cardinality at least $n$.
\end{lemma}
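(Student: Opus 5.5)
The plan is to take disjoint translates of the LMFP domain and multiply. Let $D \Subset G$ witness the LMFP. Since $G$ is infinite, we can choose $m$ elements $g_1, \ldots, g_m \in G$ such that the translates $Dg_1, \ldots, Dg_m$ are pairwise disjoint, and set $E = \bigsqcup_i Dg_i$. We aim to show that every pattern $q \in Y|E$ has an exchangeability class of size at least $2^m$. Taking $2^m \geq n$ then gives the lemma with $E$ in place of $D$.

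First we transport the LMFP along translations. Because $Y$ is shift-invariant, exchangeability of $p, p' \in Y|D$ implies exchangeability of the corresponding shifted patterns on $Dg$: for all $x \in B^{G \setminus Dg}$, $x \sqcup p \in Y \iff x \sqcup p' \in Y$, where $p, p'$ are read at $Dg$. This is a direct check with the shift convention $gx_h = x_{hg}$. So every pattern in $Y|Dg_i$ has at least one exchangeable partner different from itself.

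Next, fix $q \in Y|E$ and write $q = q_1 \sqcup \cdots \sqcup q_m$ with $q_i \in Y|Dg_i$; each $q_i$ is in $Y|Dg_i$ because $q$ extends to a point of $Y$. For each $i$, let $q_i'$ be exchangeable with $q_i$ and distinct from it. For any subset $J \subset \{1, \ldots, m\}$, let $q^J$ replace $q_i$ by $q_i'$ for $i \in J$. We show by induction on $|J|$ that $q^J$ is exchangeable with $q$. Changing a single coordinate $i$ is exactly exchangeability of $q_i, q_i'$ on $Dg_i$, applied with context $x \sqcup (\text{the other blocks})$ on $G \setminus Dg_i$. Since $\equiv$ is transitive, the successive single-block swaps compose. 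In particular each $q^J$ lies in $Y|E$: take a context in which $q$ extends into $Y$. The $2^m$ patterns $q^J$ are pairwise distinct because the $Dg_i$ are disjoint and $q_i \neq q_i'$.

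The only mildly delicate point is choosing the $g_i$ so that the translates are disjoint. We need $g_j \notin D^{-1}Dg_i$ for $i < j$, which is possible greedily since each $D^{-1}Dg_i$ is finite and $G$ is infinite.
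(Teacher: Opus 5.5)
Your proof is correct and takes essentially the paper's approach: use pairwise disjoint right translates of the LMFP domain, and observe that exchangeability classes multiply across disjoint blocks. The paper phrases this as an induction doubling the domain ($m(D \cup Dg) \geq 2m(D)$ when $D \cap Dg = \emptyset$), while you take $m$ disjoint translates at once and write out the translation-invariance and block-swapping details that the paper leaves as ``easy to see.''
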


\begin{proof}
Denote by $m(D)$ the  minimal size of an equivalence class for a pattern with domain $D$. If $D \cap Dg = \emptyset$, it is easy to see that $m(D \cup gD) \geq 2m(D)$ (note in particular that this does not require any further mixing assumptions). The proof follows easily by induction.
\end{proof}

For $Y = k^G$ a full shift, the exchangeability relation is full, and every permutation in $\Sym(Y|D) = \Sym(k^D)$ respects exchangeability.


\begin{definition}
\label{def:fupi}
Let $G$ be a group, let $X \subset A^G$, $Y \subset B^G$ be subshifts, let $U \subset X$ be clopen, let $e \in S \Subset G$, let $\pi \in \Sym(Y|S)$ respect exchangeability. Suppose that $S$ is safe for $U$, i.e.\ $tU \cap U = \emptyset$ for any $t \in S^{-1}S \setminus \{e\}$. Then we define $f_{U, \pi} : X \times Y \to X \times Y$ as follows:
\[ f_{U, \pi}(x, y)_g = \begin{cases}
(x_g, \pi(gy|S)_s) & \mbox{if } s \in S, s^{-1}gx \in U \\
(x_g, y_g) & \mbox{otherwise}.
\end{cases} \]
\end{definition}

\begin{example}
\label{ex:fupi}
Let $X \subset \{0,1\}^\Z$ be the \emph{golden mean shift}, with single forbidden pattern $11$. Then $S = \{0,1\}$ is safe for the clopen set $U = [1]$. Let $Y \subset \{0,1,2\}^\Z$ be the subshift with the single forbidden pattern $\{00\}$. Then the permutation $\pi = (12; \; 21)$ is safe for $Y$. An application of $f_{U, \pi}$ is shown in Figure~\ref{fig:fupi}.
\end{example}

\begin{figure}
\centering
\begin{tikzpicture}[scale=0.55]
\draw (-2,0) grid (10,2);
\draw[dashed] (-2,0) -- (-3,0);
\draw[dashed] (-2,1) -- (-3,1);
\draw[dashed] (-2,2) -- (-3,2);
\draw[dashed] (10,0) -- (11,0);
\draw[dashed] (10,1) -- (11,1);
\draw[dashed] (10,2) -- (11,2);

\node () at (-1.5,4.5) {$0$};
\node () at (-0.5,4.5) {$0$};
\node () at (0.5,4.5) {$0$};
\node () at (1.5,4.5) {$1$};
\node () at (2.5,4.5) {$0$};
\node () at (3.5,4.5) {$1$};
\node () at (4.5,4.5) {$0$};
\node () at (5.5,4.5) {$0$};
\node () at (6.5,4.5) {$1$};
\node () at (7.5,4.5) {$0$};
\node () at (8.5,4.5) {$0$};
\node () at (9.5,4.5) {$0$};

\node () at (-1.5,3.5) {$1$};
\node () at (-0.5,3.5) {$2$};
\node () at (0.5,3.5) {$0$};
\node () at (1.5,3.5) {$1$};
\node () at (2.5,3.5) {$2$};
\node () at (3.5,3.5) {$2$};
\node () at (4.5,3.5) {$1$};
\node () at (5.5,3.5) {$0$};
\node () at (6.5,3.5) {$1$};
\node () at (7.5,3.5) {$1$};
\node () at (8.5,3.5) {$0$};
\node () at (9.5,3.5) {$1$};

\draw[line width=2] (1,4) rectangle (2,5);
\draw[line width=2] (3,4) rectangle (4,5);
\draw[line width=2] (6,4) rectangle (7,5);

\node () at (4,2.5) {$\Downarrow$};

\draw (-2,3) grid (10,5);
\draw[dashed] (-2,3) -- (-3,3);
\draw[dashed] (-2,4) -- (-3,4);
\draw[dashed] (-2,5) -- (-3,5);
\draw[dashed] (10,3) -- (11,3);
\draw[dashed] (10,4) -- (11,4);
\draw[dashed] (10,5) -- (11,5);
\node () at (-1.5,1.5) {$0$};
\node () at (-0.5,1.5) {$0$};
\node () at (0.5,1.5) {$0$};
\node () at (1.5,1.5) {$1$};
\node () at (2.5,1.5) {$0$};
\node () at (3.5,1.5) {$1$};
\node () at (4.5,1.5) {$0$};
\node () at (5.5,1.5) {$0$};
\node () at (6.5,1.5) {$1$};
\node () at (7.5,1.5) {$0$};
\node () at (8.5,1.5) {$0$};
\node () at (9.5,1.5) {$0$};

\node () at (-1.5,0.5) {$1$};
\node () at (-0.5,0.5) {$2$};
\node () at (0.5,0.5) {$0$};
\node () at (1.5,0.5) {$2$};
\node () at (0.5,0.5) {$0$};
\node () at (1.5,0.5) {$2$};
\node () at (2.5,0.5) {$1$};
\node () at (3.5,0.5) {$1$};
\node () at (4.5,0.5) {$2$};
\node () at (5.5,0.5) {$0$};
\node () at (6.5,0.5) {$1$};
\node () at (7.5,0.5) {$1$};
\node () at (8.5,0.5) {$0$};
\node () at (9.5,0.5) {$1$};

\draw[line width=2] (1,0) rectangle (3,1);
\draw[line width=2] (3,0) rectangle (5,1);
\draw[line width=2] (6,0) rectangle (8,1);

\end{tikzpicture}
\caption{Application of $f_{U, \pi}$ from Example~\ref{ex:fupi}. Occurrences of $U$ (i.e.\ $1$-symbols) are highlighted in the preimage. The corresponding areas where $\pi$ was applied are highlighted in the image. Of course, $11$ is fixed by $\pi$, so the pattern stays fixed.}
\label{fig:fupi}
\end{figure}


\begin{lemma}
\label{lem:Homomorphism}
Under the assumptions of the definition, the map $f_{U, \pi}$ is an automorphism of $X \times Y$. Furthermore, if $U$ and a safe set $S$ are fixed, then the map from $\Sym_\equiv(Y|S) \to \Aut(X \times Y)$ defined by $\pi \mapsto f_{U, \pi}$ is an embedding.
\end{lemma}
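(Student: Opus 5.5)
The plan is to treat $f_{U,\pi}$ as a commuting product of local permutations applied independently in disjoint windows of the bottom track. For $(x,y)\in X\times Y$, let $O(x)=\{g\in G \;|\; gx\in U\}$ be the set of occurrences of $U$ in $x$. For $g\in O(x)$, the window at $g$ is $Sg$; the formula in Definition~\ref{def:fupi} rewrites the bottom track on $Sg$ by applying $\pi$ to the pattern $gy|S$. The first point to check is that $f_{U,\pi}$ is well defined, i.e.\ that the windows $Sg$, $g\in O(x)$, are pairwise disjoint. If $sg=s'g'$ with $g\neq g'$ both in $O(x)$, then $g=s^{-1}s'g'$ with $s^{-1}s'\neq e$, so $gx\in s^{-1}s'U\cap U$. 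This contradicts safety. Hence for each position $h$ there is at most one pair $(s,g)$ with $h=sg$ and $g\in O(x)$, and the case distinction in the definition is unambiguous.

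Next I would check continuity and shift-commutation. The value $f_{U,\pi}(x,y)_h$ depends only on $x$ and $y$ in the finite neighbourhood $S^{-1}S h$ together with a neighbourhood determining membership of $s^{-1}hx$ in the clopen set $U$. So $f_{U,\pi}$ is given by a local rule that does not depend on the position $h$, which makes it a continuous, shift-commuting map on the full shift $(A\times B)^G$ (a cellular automaton).

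The main point, and the step I expect to need the most care, is that $f_{U,\pi}$ maps $X\times Y$ into itself. The top track is untouched, so it suffices to show that the new bottom configuration $y'$ lies in $Y$. Enumerate $O(x)$ as $g_1,g_2,\dots$ (it is countable), and let $y^{(n)}$ be obtained from $y$ by applying $\pi$ only in the windows $Sg_1,\dots,Sg_n$. Going from $y^{(n-1)}$ to $y^{(n)}$ replaces the pattern on the finite set $Sg_n$, namely $g_ny^{(n-1)}|S=g_ny|S$ (unchanged, by disjointness of the windows), with its image under $\pi$. Since $\pi$ respects exchangeability, this image is exchangeable with the original pattern. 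The exchangeability relation is shift-invariant after translating the domain $S$ to $Sg_n$, so $y^{(n)}\in Y$ by induction. Since $y^{(n)}\to y'$ in the product topology and $Y$ is closed, $y'\in Y$.

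It remains to prove the group structure and injectivity. Because the windows do not depend on $y$ (only on $x$, which is fixed), we get $f_{U,\pi}\circ f_{U,\sigma}=f_{U,\pi\sigma}$ and $f_{U,\mathrm{id}}=\mathrm{id}$. Thus $f_{U,\pi}$ is a bijection with inverse $f_{U,\pi^{-1}}$, and being a continuous bijection of a compact space it is a homeomorphism, hence lies in $\Aut(X\times Y)$. The same composition identity shows that $\pi\mapsto f_{U,\pi}$ is a homomorphism, so it remains to check that it is injective. If $\pi\neq\mathrm{id}$, pick $p\in Y|S$ with $\pi(p)\neq p$, and choose $y\in Y$ with $y|S=p$. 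We also need $x\in U$, which requires $U\neq\emptyset$; I will assume this implicitly, since otherwise the map is trivial. Then $ex\in U$, and $f_{U,\pi}(x,y)$ has bottom restriction $\pi(p)\neq p$ on $S$, so $f_{U,\pi}\neq\mathrm{id}$.
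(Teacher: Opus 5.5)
Your proof is correct and follows the same route as the paper. Safety gives disjointness of the windows $Sg$, a local rule gives continuity and shift-commutation, closedness of $Y$ lets you pass from finitely many exchangeable replacements to infinitely many, the identity $f_{U,\pi}\circ f_{U,\sigma}=f_{U,\pi\sigma}$ gives bijectivity, and injectivity comes from placing any pattern of $Y|S$ under an occurrence of $U$ in the full product $X\times Y$. Your remark that injectivity requires $U\neq\emptyset$ is a valid caveat that the paper leaves implicit.
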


\begin{proof}
Clearly the map $f_{U, \pi}$ is shift-commuting and continuous, as it is defined by a local rule (though a priori its codomain might be $X \times B^G$). Consider $x, y \in X \times Y$. Let $R \subset G$ be the set of elements $g \in G$ such that $gx \in U$. The action of $f_{\pi, U}$ does not modify $x$ in any way. In $y$, it affects at most the cells at $g \in G$ such that $s^{-1}g \in R$ for some $s \in S$. Observe that this cannot happen for two different $s_1, s_2 \in S$, since if $s_1^{-1}g, s_2^{-1}g \in R$, then $s_1^{-1} gx \in U \cap t U$ where $t = s_1^{-1} s_2 \in S^{-1} S$.

Now the action of $f_{U,\pi}$ is simply to apply $\pi$ to the pattern $gy|S$ whenever $g \in R$, but seen as a relative pattern at $g$, i.e.\ we rewrite the pattern $y|Sg$. This is well-defined because the areas affected are disjoint for distinct nodes $g, g' \in R$, and the patterns are exchangeable. Note here that while exchangeability is a property of individual exchanges of patterns, we can also do infinitely many changes at once, as this can be seen as a limit of a sequence of exchanges (and, of course, $Y$ is closed). So the codomain is in fact $X \times Y$.

This description also shows that $\pi \mapsto f_{U,\pi}$ is a homomorphism, which shows that $f_{U,\pi}$ is bijective, as $f_{U,\pi^{-1}}$ is its inverse map (so $f_{U,\pi} \in \Aut(X \times Y)$). It is an embedding because any $Y|S$ can appear under an occurrence of any $U \subset X$, since we work with the full product subshift $X \times Y$.
\end{proof}

\section{Non-residual finiteness of $\Aut(X \times Y)$}


\begin{theorem}
\label{thm:NonRF}
Suppose $G$ is a finitely-generated group. Suppose $Y \subset B^G$ has LMFP. Then there exists $D$ such that if
\begin{itemize}
\item $X \subset A^G$ is a $D^{-1} D$-aperiodic subshift, and
\item $X$ has an open set $U$ such that no point in $U$ has finite orbit, then 
\end{itemize}
the group $\Aut(X \times Y)$ is not residually finite. If $Y$ is a full shift, we may take $D = \{e\}$.
\end{theorem}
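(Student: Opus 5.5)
The plan is to exhibit one nontrivial element $f \in \Aut(X \times Y)$ such that, for every $N$, $f$ is a commuting product of elements each lying in a simple (alternating) subgroup of $\Aut(X \times Y)$ of order larger than $N$. Any homomorphism $\phi$ to a finite group $Q$ with $|Q| \leq N$ kills each such simple subgroup, hence $\phi(f) = e_Q$. Since $N$ is arbitrary, $f$ survives in no finite quotient.

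\textbf{Choice of $D$, $U'$ and $f$.} By Lemma~\ref{lem:LMFP}, choose $D \ni e$ so that every exchangeability class of $Y|D$ has size at least $3$; for a full shift take $D = \{e\}$. Next shrink $U$ to a nonempty clopen set $U' \subset U$ and fix a finite set $S \supset D$, a union of a few disjoint translates of $D$, that is safe for $U'$. This is possible by compactness: points of $U$ have infinite orbit, so for each fixed $t \neq e$ we have $tx \neq x$ near a given point. Shrinking $U'$ to a small cylinder around such a point gives $tU' \cap U' = \emptyset$ for all $t \in S^{-1}S \setminus \{e\}$. Since $S$ contains at least two disjoint blocks whose pattern classes have size at least $3$ (size $|B|^{|S|}$ in the full-shift case), we can pick a nontrivial $\pi \in \Alt_\equiv(Y|S)$, for instance a $3$-cycle inside one class. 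Set $f = f_{U',\pi}$; it is nontrivial by Lemma~\ref{lem:Homomorphism}. Note that no point of $U'$ is totally periodic.

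\textbf{Embedding $f$ into large alternating groups.} Fix $n$. Run the process with the preprocessing of Lemma~\ref{lem:Eventually}, using $C = U'$ and the safe set $S$; the $D^{-1}D$-aperiodicity of $X$ is what that lemma needs here, and in the full-shift case $D = \{e\}$ makes it vacuous. This gives a time $t$ after which, for every $x$, every occurrence $g$ of $U'$ has $Sg$ inside its colony, and that colony has $D$-count at least $n$. Colonies are computed by a continuous shift-commuting rule, so the brain of each colony can see, in a bounded window, its colony's shape and the $x$-pattern on it. Only finitely many such \emph{colony types} $\tau$ occur among colonies containing $U'$-occurrences.

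For each type $\tau$, a deterministic rule picks inside the colony a family $W_\tau$ of pairwise disjoint translates of $D$. The family first contains the $D$-blocks making up the $S$-blocks at all $U'$-occurrences; these are disjoint by safety. It is then extended greedily using the $D$-count, so it has size at least $n/|D^{-1}D|$. Patterns on a disjoint union of blocks that are blockwise exchangeable are exchangeable. Hence the exchangeability classes of $Y|W_\tau$ contain products of at least $n/|D^{-1}D|$ classes of size at least $3$, so every nontrivial class has size tending to infinity with $n$.

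As in Lemma~\ref{lem:Homomorphism}, every $\sigma \in \Alt_\equiv(Y|W_\tau)$ gives an automorphism of $X \times Y$: apply $\sigma$ at each brain of type $\tau$. This yields an embedding of $\prod_\tau \prod_{\text{classes } E} \Alt(E)$ into $\Aut(X \times Y)$, the factors commuting because colonies are disjoint. Now $f$ acts on a colony of type $\tau$ by applying $\pi$ on each of its disjoint $S$-blocks at $U'$-occurrences. Because $\pi$ is even within each class, this action is an element of $\Alt_\equiv(Y|W_\tau)$, so it decomposes into even permutations, one for each class $E$. Therefore $f$ is a commuting product of elements of the simple groups $\Alt(E)$, each of order at least $|E|!/2$, which is as large as we like once $n$ is large.

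\textbf{Main obstacle.} The main work is the bookkeeping in the previous paragraph: checking that the action of $f$ really lives on the colony blocks $W_\tau$, with every $U'$-occurrence inside a large colony and no $U'$-block straddling two colonies, and that the colony-wise permutations are genuine automorphisms. This relies on Lemma~\ref{lem:Eventually} together with the disjointness and exchangeability argument of Lemma~\ref{lem:Homomorphism}. A further point of care is that the exchangeability classes of $Y|W_\tau$ coming from the Lemma~\ref{lem:LMFP} construction genuinely grow with $n$. The concluding step is then immediate, since every finite quotient is smaller than these simple groups.
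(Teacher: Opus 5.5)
\paragraph{A genuine gap in the setup.} Your overall architecture matches the paper's: run the process until colonies containing occurrences of $U'$ are large, let alternating groups act colony-wise at the brains, and write $f$ as a commuting product of elements of these large simple groups. The gap is in the setup. You invoke Lemma~\ref{lem:Eventually} with $C = U'$ and safe set $S$, where $S$ deliberately contains at least two disjoint translates of $D$. That lemma assumes $X$ is $S^{-1}S$-aperiodic, not merely $D^{-1}D$-aperiodic. Its preprocessing recodes so that $x_g \neq x_{sg}$ for $s \in S^{-1}S \setminus \{e\}$, and this is what gives every colony positive $S$-count; the paper notes explicitly that dropping this hypothesis is problematic. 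So your claim that ``the $D^{-1}D$-aperiodicity is what that lemma needs, and it is vacuous for full shifts'' is wrong. In the full-shift case nothing at all is assumed about $X$. For instance, take $X = X_1 \cup C^G$, where $X_1$ has no totally periodic points and $C^G$ is a full shift on a disjoint alphabet. This $X$ contains constant points, so it is not $t$-aperiodic for any $t \neq e$, yet the theorem applies with $U = X_1$.

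Your justification that a safe $S$ exists is also incorrect. An infinite orbit only says that $\mathrm{Stab}(x)$ has infinite index, not that $tx \neq x$ for every $t \neq e$; $\Z^2$-configurations that are constant along columns are a counterexample. Safety of $D$ itself is exactly where the paper uses the $D^{-1}D$-aperiodicity. An extra translate $Dg_2$ must be chosen with $g_2 \notin D^{-1}D\,\mathrm{Stab}(x)$, which is possible because the stabilizer has infinite index.

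\paragraph{How to repair it.} The enlargement beyond $D$ is only forced when $Y = B^G$ with $|B| = 2$. Under LMFP, a single block whose classes have size at least $3$ already carries a $3$-cycle. There are two ways to fix the argument.
\begin{itemize}
\item Follow the paper's parity trick. Keep $D = \{e\}$ as the only safe set, and take $\hat\pi \in \Sym(B)$ to be even if $|B|$ is odd, or any nontrivial permutation if $|B|$ is even. Acting on one coordinate of $B^S$ with $|S| \geq 2$, such a $\hat\pi$ is then even, so on each large colony $f$ lies in $\Alt(B^S)$.
\item Keep your $S$ but change the preprocessing. First perform the $S$-merge at occurrences of $U'$, which is allowed since $S$ is safe for $U'$. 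Then run the $D$-based preprocessing, which needs only $D^{-1}D$-aperiodicity, so that every colony has positive $D$-count. Since afterwards you only use $D$-counts, the Dini argument of Lemma~\ref{lem:Eventually} goes through.
\end{itemize}
With either repair, the rest of your bookkeeping is sound: the disjoint blocks $W_\tau$, blockwise exchangeability, and evenness within each exchangeability class.
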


\begin{proof}
We first prove the general claim, and explain the minor modifications needed for the last sentence at the end. Let $D \Subset G$ be such that each exchangeability class in $Y|D$ has cardinality at least $3$. Such $D$ exists by Lemma~\ref{lem:LMFP}. Let $U \subset X$ be a clopen set containing no periodic points. Note that by refining $U$, $D$ automatically becomes safe for $U$. Otherwise, by taking a limit we find a point in $X$ with a period in $D^{-1}D \setminus \{e\}$, which was assumed not to exist. We may assume $U$ is a cylinder (by taking any cylinder inside it), and then by recoding $X$ we may assume it is a basic cylinder, i.e.\ of the form $U = [a] = \{x \in X \;|\; x_e = a\}$. Define $f = f_{U, \hat\pi}$ for some nontrivial even permutation $\hat\pi \in \Alt_\equiv(Y|D)$. 

We will now show that for all $n \in \N$,
\begin{itemize}
\item there exist finite simple subgroups $\mathcal{G}_i$, which commute pairwise and are all of cardinality at least $n$,
\item $f$ can be written as a product of automorphisms $f_i$ which belong to the groups $\mathcal{G}_i$, respectively.
\end{itemize}

This implies that $\Aut(X \times k^G)$ is not residually finite: If it were, it would admit a finite quotient $Q$ where $f$ maps nontrivially. Let then $n > |Q|$ and write $f$ in the form described in the second item. Then since $f$ is a product of the $f_i$, some $f_i$ maps nontrivially into $Q$ as well, but then the simple group $\mathcal{G}_i$ embeds into $Q$, contradicting $|\mathcal{G}_i| \geq n > |Q|$.

To prove the claim, we apply Lemma~\ref{lem:Eventually} to the clopen set $U$, starting with $D$-merges at each occurrence of $U$. Specifically, we obtain $t$ such that after $t$ time steps of applying the process,
\begin{itemize}
\item for every $g \in G$ such that $gx \in U$, $Dg$ is contained in the colony, and
\item every colony $S$ of any $g$ such that $gx \in U$ contains at least $n$ many $h \in G$ such that $Dh \subset S$.
\end{itemize}

The set $V$ of all points $x$ where the colony of $e$ is of size at least $n$ after $t \geq 1$ steps of the process is itself of course clopen, and by the previous paragraph it contains $U$. Concretely this means that $V$ contains every configuration $x$ with $x_e = a$. The set $V$ admits a natural finite decomposition according to the precise shape of the set $S$ relative to the brain, and finally the subset of $S$ where the symbol $a$ appears. We write these sets as $V_{S, T, g}$, and define $V_{S, T} = \bigcup_{g \in S} V_{S, T, g}$. More precisely, if after applying the process to $x \in X$ for $t$ steps, the colony of $e$ has its brain at $h$, the colony of $e$ is $S'$, and when $k \in S'$ we have $x_k = a$ precisely if $k \in T'$, then $x \in V_{S, T, g}$ where $g = h^{-1}$ and $S = S'h^{-1}$, $T = T'h^{-1}$. Note that the process starts with a $D$-merge at all occurrences of $a$, so we have $DT \subset S$.

For each of the finitely many possible (brain-relative) shapes $S$ and sets $T$, the map $\pi \mapsto f_{V_{S, T}, \pi}$ from $\Alt_\equiv(Y|S)$ to $\Aut(X \times Y)$ is an embedding by Lemma~\ref{lem:Homomorphism}. Furthermore, for each $S, T$, the group $\Alt_\equiv(Y|S)$ is a product of alternating groups. Since the sets $S$ eventually contain many right translates of $D$, eventually they contain many disjoint translates, so the equivalence classes in $Y|S$ become arbitrarily large. This implies that $\Alt_\equiv(Y|S)$ is a product of alternating groups on arbitrarily large sets, thus of arbitrarily large simple groups.

List all these groups as $\mathcal{G}_i$, specifically each $i$ corresponds to a choice of $S, T$, and an equivalence class $E$ of $Y|S$, and $\mathcal{G}_i$ is isomorphic to the alternating group on $E$ (and specifically acts under occurrences of $V_{S, T}$). Think now about the action of $f$ under an occurrence of $V_{S, T}$. Clearly, it simply applies $\hat\pi$ at each $Dg$ with $g \in T$. Let $f_i \in \mathcal{G}_i$ be the automorphism that applies $\hat\pi$ in each position of $T$ under occurrences of $V_{S, T}$, whenever $Y|S \in E$. Then the product of the $f_i$ over all possible $i$ is precisely $f$. This concludes the proof of the general case.

In the case of a full shift, we need a minor change: if we take $D = \{e\}$, then the cardinality of an equivalence class may not be $3$, specifically if $k = 2$. We describe a uniform way to deal with all full shifts. Take $f = f_{U, \hat\pi}$ where $\hat\pi$ has the same parity as $k$. If the alphabet is even, then $\hat\pi$ is even when seen as a permutation of one coordinate of a power of the alphabet, and if $k$ is odd then $\hat\pi$ is directly an even permutation of the alphabet. In either case, as in the proof of the general case, under an occurrence of $V_{S, T}$ with $|S| \geq 2$, it applies an even permutation of $\llb k \rrb^S$ (it applies $\hat\pi$ in positions $g \in T$), so we can follow the same proof.
\end{proof}

\begin{corollary}
Let $G$ be a finitely-generated group and suppose $X$ does not have dense totally periodic points. Then the group $\Aut(X \times B^G)$ is not residually finite for $|B| \geq 2$.
\end{corollary}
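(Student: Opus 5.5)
This will be a direct application of Theorem~\ref{thm:NonRF} with $Y = B^G$. For a full shift with $|B| \geq 2$, $Y$ has the LMFP: every pattern in $Y|\{e\}$ is exchangeable with every other, so each class has size $|B| \geq 2$. The theorem then lets us take $D = \{e\}$. In that case $D^{-1}D \setminus \{e\} = \emptyset$, so the $D^{-1}D$-aperiodicity hypothesis on $X$ holds vacuously. It therefore suffices to produce a nonempty open set $U \subset X$ such that no point of $U$ has finite orbit.

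Since $X$ does not have dense totally periodic points, there is a nonempty open $U \subset X$ containing no totally periodic point. A point $x$ is totally periodic exactly when its stabilizer has finite index, which happens exactly when its $G$-orbit is finite (orbit-stabilizer). So no point of $U$ has finite orbit. Shrinking $U$ to a cylinder inside it keeps this property and gives the clopen set used in the proof of Theorem~\ref{thm:NonRF}.

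Applying Theorem~\ref{thm:NonRF} with this $U$, $D = \{e\}$ and $Y = B^G$ then shows that $\Aut(X \times B^G)$ is not residually finite.

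The only point needing care is the full-shift clause of Theorem~\ref{thm:NonRF} when $|B| = 2$, where the exchangeability class has only two elements and $\Alt(B)$ is trivial. That case is already handled at the end of the theorem's proof: one chooses $\hat\pi$ with the parity of $|B|$, so that under each occurrence of $V_{S,T}$ with $|S| \geq 2$ it acts as an even permutation of $B^S$. I would check that every colony produced by the process has $|S| \geq 2$ once $n \geq 2$, which Lemma~\ref{lem:Eventually} provides. Granted that, the corollary follows immediately.
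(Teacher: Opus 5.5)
Your proposal is correct and matches the paper's proof. Both apply Theorem~\ref{thm:NonRF} with $Y = B^G$ and $D = \{e\}$, so that $D^{-1}D$-aperiodicity is vacuous, and take for $U$ a nonempty clopen set with no totally periodic points; by orbit--stabilizer, this is the same as having no finite orbits. Your last paragraph re-checks the $|B| = 2$ case inside the theorem's proof. That check is harmless but not needed, since the theorem's statement already covers full shifts with $D = \{e\}$.
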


\begin{proof}
Since periodic points are not dense, there is a clopen set $U$ without any totally periodic points. In the previous theorem, when $Y$ is a full shift we can take $D = \{e\}$, and any subshift is trivially $D^{-1}D$-aperiodic (this is an empty requirement).
\end{proof}

\section{The group $\Z^2$}

\begin{theorem}
If $G$ contains a copy of $\Z^2$, then $G$ admits a strongly irreducible subshift whose automorphism group is not residually finite.
\end{theorem}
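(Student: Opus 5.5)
Start with $G = \Z^2$ and use Hochman's theorem \cite{Ho25}: there is a strongly irreducible $X \subset A^{\Z^2}$ with no periodic points at all. Such an $X$ has in particular no totally periodic points, so any nonempty clopen set, say $U = X$ itself, contains none. Put $Z = X \times \{0,1\}^{\Z^2}$. A product of two strongly irreducible subshifts is strongly irreducible: take $S$ to be the union of the two witnessing sets and glue coordinatewise. The full shift is strongly irreducible with $S = \{e\}$, so $Z$ is strongly irreducible. The Corollary following Theorem~\ref{thm:NonRF} (equivalently Corollary~\ref{cor:LevelOfGroups} applied to the class of strongly irreducible $\Z^2$-subshifts, which is closed under products with full shifts) then shows that $\Aut(Z)$ is not residually finite. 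This settles $G = \Z^2$.

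For general $G \geq H \cong \Z^2$, I would induce the subshift. Fix a set $R$ of right coset representatives for $H$, so that $G = \bigsqcup_{r \in R} Hr$. Define $\tilde Z \subset B^G$ (with $B = A \times \{0,1\}$) to consist of those $z$ whose restriction to each coset $Hr$, read as an $H$-configuration via $h \mapsto z_{hr}$, lies in $Z$. This is the same as forbidding the finitely supported patterns of $Z$ placed inside $H$, in all $G$-translates, so $\tilde Z$ is a $G$-subshift. It is strongly irreducible with the same finite set $S \subset H$: if $SM \cap SN = \emptyset$ in $G$, then on each coset the traces of $M$ and $N$ are $S$-separated inside $H$, so we glue coset by coset and the result lies in $\tilde Z$. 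We also have $\tilde Z = \tilde X \times \{0,1\}^G$ with $\tilde X$ the induced subshift of $X$. One caution: the shift convention $gx_h = x_{hg}$ should be matched with the side of the cosets, and I expect left/right bookkeeping to be the only fiddly point of this construction.

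To finish, I would not reprove anything but transfer the non-residual-finiteness certificate. By the proof of Theorem~\ref{thm:NonRF}, for every $n$ there is a fixed $f = f_{U,\hat\pi} \in \Aut(Z)$ that is a product of commuting elements $f_i$, each lying in a finite simple subgroup $\mathcal{G}_i$ of order at least $n$. Every $H$-automorphism $\phi$ of $Z$ given by a local rule with neighbourhood in $H$ induces an automorphism $\tilde\phi$ of $\tilde Z$ by applying that rule on every coset simultaneously. This commutes with the $G$-action because the rule only looks inside $H$-cosets via right multiplication. The map $\phi \mapsto \tilde\phi$ is an injective homomorphism $\Aut(Z) \to \Aut(\tilde Z)$; it is injective because the restriction to a single coset can be an arbitrary point of $Z$. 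Therefore $\tilde f$ is a product of commuting elements of the large simple groups $\tilde{\mathcal{G}}_i \cong \mathcal{G}_i$. Any finite quotient $Q$ of $\Aut(\tilde Z)$ with $n > |Q|$ kills every $\tilde{\mathcal{G}}_i$, hence kills $\tilde f$, so $\Aut(\tilde Z)$ is not residually finite.

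The main obstacle is checking carefully that the induced construction really preserves strong irreducibility and that the induced maps are well defined, continuous $G$-commuting automorphisms. The non-residual-finiteness itself comes entirely from Theorem~\ref{thm:NonRF} combined with Hochman's example.
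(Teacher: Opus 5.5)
Your proposal is correct and follows the paper's proof. Both take Hochman's aperiodic strongly irreducible subshift times a full shift and apply Theorem~\ref{thm:NonRF} on $\Z^2$, then pass to the free extension to $G$ (your coset-wise construction is exactly the paper's $z \in Z' \iff \forall g: gz|\Z^2 \in Z$), whose automorphism group contains a copy of $\Aut(Z)$. The only difference is that you re-run the simple-group certificate through $\phi \mapsto \tilde\phi$. That works, but since residual finiteness passes to subgroups, the injective homomorphism $\Aut(Z) \to \Aut(\tilde Z)$ alone already finishes the argument, which is what the paper uses.
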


\begin{proof}
Let $X \subset A^{\Z^2}$ be Hochman's strongly irreducible subshift with no periodic points from \cite{Ho25}. In particular, periodic points are not dense in $X$, so by Theorem~\ref{thm:NonRF}, $Z = X \times B^{\Z^2}$ has non-RF automorphism group for $|B| \geq 2$, and of course it is still strongly irreducible. We can extend this to $G$ by taking the \emph{free extension} to $G$, meaning the subshift $Z' \subset (A \times B)^G$ where 
\[ z \in Z' \iff \forall g \in G: gz|\Z^2 \in Z \]
which is clearly still strongly irreducible, and whose automorphism group clearly contains a copy of $\Aut(Z)$. 
\end{proof}

We will need the following well-known result. We state a proof in symbolic dynamical language.\footnote{The author learned from Dominik Kwietniak that the trick to the proof is to find a synchronizing word.}

\begin{lemma}
\label{lem:ZSI}
Let $X \subset A^\Z$ be a strongly irreducible subshift. Then $X$ has dense totally periodic points.
\end{lemma}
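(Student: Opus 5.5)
The plan is to show that for every word $w$ in the language of $X$ there is a periodic point containing $w$. Let $N$ be the gap from strong irreducibility, so that any left ray and right ray of $X$ separated by at least $N$ cells can be glued into a point of $X$. The tool will be a \emph{synchronizing word}: a word $s$ such that whenever $\ell s$ and $s r$ are legal, for a left ray $\ell$ and a right ray $r$, the concatenation $\ell s r$ is a point of $X$. Given such an $s$ that contains $w$, I finish in two steps.
\begin{itemize}
\item By strong irreducibility there is a word $u$ with $|u| \geq N$ such that $s u s$ is legal.
\item The point $(su)^{\infty}$, indexed so that every copy of $s$ starts at a multiple of $|su|$, lies in $X$.
\end{itemize}
For the second step I argue by induction on $k$ that every finite word $(su)^k s$ is legal. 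Cut at the last copy of $s$: the left part is legal by induction, and the right part $s u s$ is legal, so synchronization glues them. Closedness of $X$ then gives the periodic point, which is totally periodic on $\Z$ and contains $w$.

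So the real work is to produce a synchronizing word containing $w$. For a word $v$, I consider the follower set $F(v)$ of right rays $r$ with $v r$ legal, and the predecessor set $P(v)$ of left rays $\ell$ with $\ell v$ legal. Both are closed sets. I would build $s$ by repeatedly extending $w$ on both sides, using strong irreducibility to insert fillers of length $N$. Extending shrinks the follower and predecessor sets. The aim is to reach a word $s$ whose predecessor and follower sets no longer constrain each other, meaning that every $\ell \in P(s)$ and every $r \in F(s)$ are compatible through $s$. Strong irreducibility is what should make this possible: across any window of length at least $N$, the only constraint linking the two sides is the content of that window. I would therefore try to take $s = w' c w''$, where $c$ is a filler of length at least $N$ and $w'$, $w''$ are suitable extensions of $w$. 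I would then show that gluing $\ell w' c$ and $c w'' r$ is exactly an instance of the gluing condition of Definition~\ref{def:StronglyIrreducible}.

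The main obstacle is the step above. For a general subshift the sets $F(v)$ form an infinite decreasing family, and synchronizing words need not exist, so one cannot simply stabilize these sets. My first attempt would avoid synchronization altogether and use a pigeonhole argument instead:
\begin{itemize}
\item Using strong irreducibility and compactness, build a single point $x$ in which $w$ occurs at positions $iL$ for all $i \in \N$, for a fixed $L \geq |w| + N$.
\item Pick two indices $i<j$ at which the blocks $x|_{\llb iL, iL+M \rrb}$ agree, for $M$ larger than $N$.
\item Periodize the segment between them, applying the gluing property of Definition~\ref{def:StronglyIrreducible} across the repeated block.
\end{itemize}
The hard part of this route is checking that the gluing in the last step is consistent at all seams at once. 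If this check fails, I fall back on the synchronizing-word argument of the previous paragraph.
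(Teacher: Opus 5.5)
Your reduction to a synchronizing word is fine and matches the end of the paper's proof, including the induction showing that every $(su)^k s$ is legal and the passage to the limit. The gap is the existence of a synchronizing word. That is the real content of the lemma, and neither of your two routes supplies it.

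Your heuristic that a window $c$ of length at least $N$ decouples the two sides is false. Gluing $\ell w' c$ with $c w'' r$ is not an instance of strong irreducibility. Strong irreducibility only says that $\ell w'$ and $w'' r$ can be glued through \emph{some} filler, not through the prescribed $c$. The even shift shows this concretely: runs of $0$s between consecutive $1$s have even length, and it is strongly irreducible, being a factor of the golden mean shift. Yet no word $0^m$ is synchronizing. The words $10^m$, $100^m$ and $0^m1$ are all legal, but exactly one of $10^m1$ and $10^{m+1}1$ is.

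Your pigeonhole fallback has the same defect. Two agreeing blocks can be spliced only if the block is synchronizing for the contexts at hand. In the even shift, two occurrences of $0^m$ lying at different parities inside their runs give an odd run after periodizing. So consistency at the seams is not a routine check; it is the whole difficulty.

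The missing idea is the paper's finiteness argument.
\begin{enumerate}
\item Suppose no synchronizing word exists, and take any legal $w_1$ of length $N$.
\item Non-synchronization (plus compactness) gives words $u_1, v_1$ with $u_1 w_1$ and $w_1 v_1$ legal but $u_1 w_1 v_1$ illegal.
\item Strong irreducibility gives some $w_2$ of length $N$ with $u_1 w_2 v_1$ legal, so $w_2 \neq w_1$.
\item Iterate with the accumulated contexts $U_k = u_k \cdots u_1$ and $V_k = v_1 \cdots v_k$. For every $j \leq k$, the word $U_k w_j V_k$ contains the illegal word $U_j w_j V_j$. Hence the new central word $w_{k+1}$, chosen with $U_k w_{k+1} V_k$ legal, differs from all earlier ones.
\item This yields infinitely many distinct words of length $N$, which is absurd.
\end{enumerate}
This is exactly the stabilization you were looking for. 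It is carried out on the finite set of central words of length $N$ rather than on the infinite family of follower sets.

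Once some synchronizing $s$ exists, your requirement that it contain $w$ costs nothing. Every legal extension of a synchronizing word is again synchronizing, so $scw$ with a filler $c$ from strong irreducibility works. Alternatively, place $w$ inside $u$.
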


\begin{proof}
Say a word $w \in A^n$ is \emph{synchronizing} (for $X$) if it appears in some configuration of $X$, and for all $x \in A^{(-\infty,-1]}$ and $y \in A^[n, \infty)$ we have
\[ x \sqcup w \in X|(-\infty, n-1] \wedge w \sqcup y \in X|[0, \infty). \]

Let $r$ be the strong irreducibility radius, meaning we can take $S = \llb -r, r \rrb$ in the definition of strong irreducibility. We show that a synchronizing word exists. For a contradiction, suppose one does not exist. Let $w_1 \in X|\llb 2r+1 \rrb$ be arbitrary. Since $w_1$ is not synchronizing, by compactness we find words $u_1, v_1$ such that
\[ u_1w_1 \sqsubset X, w_1v_1 \sqsubset X, \mbox{ but } u_1w_1v_1 \not\sqsubset X \]
By strong irreducibility, and we find some $w_2 \in X|\llb 2r+1\rrb \setminus \{w_1\}$ such that $u_1w_2v_1 \sqsubset X$. Since this word is not synchronizing, we find words $u_2, v_2$ such that
\[ u_2u_1w_2v_1 \sqsubset X, u_1w_2v_1v_2 \sqsubset X, \mbox{ but } u_2u_1w_2v_1v_2 \not\sqsubset X. \]

Continuing this argument inductively, we find an infinite sequence of \textbf{distinct} words $w_1, w_2, w_3, \ldots \in X|\llb 2r+1 \rrb$ and words $u_i, v_i$ such that
\[ u_k \cdots u_2u_1w_k v_1 v_2 \cdots v_{k-1} \sqsubset X, u_{k-1} \cdots u_2u_1w_k v_1 v_2 \cdots v_k \sqsubset X, \]
but
\[ u_k \cdots u_2u_1w_k v_1 v_2 \cdots v_k \not\sqsubset X. \]
This is a contradiction, since $X|\llb 2r+1\rrb$ is finite.

Once have a synchronizing word $w \in A^n$, we are almost done. By strong irreducibility, we find a configuration $x \in X$ such that $k(n+2r+1) \cdot x|\llb n \rrb = w$ for all $k \in \Z$. Clearly, any two words of the form $wuw$ and of the same length are exchangeable. Thus, the $(n+2r+1)\Z$-periodic point $y \in A^\Z$ with $y|\llb n+2r+1\rrb = x|\llb n+2r+1\rrb$ is in $X$. In particular $X$ contains a periodic point. By \cite{CeCo12}, then totally periodic points are dense (alternatively, one can directly include any word inside a synchronizing word, or inside $u$).
\end{proof}

It is shown in \cite{Li03} that strongly irreducible $\Z^2$ SFTs have dense periodic points, and it is well-known that the same is true for block-gluing $\Z^2$-SFTs. Recall that a $\Z^2$-subshift $X$ is \emph{block gluing} if for some \emph{radius} $r$, for all $x, y \in X$ there exist $z, z' \in X$ such that
\[ z|(-\N \times \Z) = x|(-\N \times \Z), \;\; z|([r, \infty) \times \Z) = y|([r, \infty) \times \Z), \]
\[ z'|(\Z \times -\N) = x|(\Z \times -\N), \;\; z'|(\Z \times [r, \infty)) = y|(\Z \times [r, \infty)). \]
This of course implies by strong irreducibility.

We need a slight strengthening of this result, where the periodic points are found in a containing SFT instead of the SFT directly being strongly irreducible (or block gluing).

\begin{lemma}
Let $X \subset Y \subset A^{\Z^2}$ be subshifts, where $Y$ is SFT, and $X$ is block gluing. Then periodic points of $Y$ are dense in $X$.
\end{lemma}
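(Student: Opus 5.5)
The plan is the standard block-gluing argument for SFTs, with the forbidden patterns of $Y$ (rather than of $X$) doing the work. Fix a pattern $p$ appearing in some $x \in X$, say on the box $\llb -m, m\rrb^2$. Let $N$ be a window size such that $Y$ is defined by forbidden patterns supported on $\llb N \rrb^2$. Let $r$ be the block gluing radius of $X$. I will build a totally periodic point of $Y$ containing $p$. Here totally periodic means periodic under a finite-index subgroup, which for $\Z^2$ means two independent periods.

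\textbf{Step 1: horizontal periodization.} Consider the horizontal strip $\Z \times \llb -m-N, m+N \rrb$, or more precisely whole configurations. Repeatedly gluing copies of $x$ horizontally using block gluing gives points $z_k \in X$. Their restrictions to the strip $\Z \times \llb -M, M\rrb$ consist of a sequence of ``columns of width $\ell$'', drawn from the finite set $X|(\llb \ell \rrb \times \llb -M, M\rrb)$. Concretely, I glue $x$ to itself shifted by $L = 2m+1+r$ repeatedly; by compactness and block gluing I obtain $z \in X$ with $p$ appearing at all positions $(jL', 0)$ in a left-infinite or bi-infinite fashion. By pigeonhole, among the vertical strip-pieces $z|(\llb iL', iL'+N \rrb \times \llb -M,M \rrb)$ two coincide, say at $i<i'$. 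I then define a horizontally periodic strip configuration by repeating $z|(\llb iL', i'L' \rrb \times \llb -M,M\rrb)$. Every $\llb N\rrb^2$-window of this strip is a window of $z$, because the overlap region of width $N$ agrees. Hence no forbidden pattern of $Y$ appears in the strip; this is the key use of $Y$ being an SFT, since the strip need not lie in $X$. The catch is that I need a full $\Z^2$ configuration, not just a strip.

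\textbf{Step 2: vertical periodization.} To fix that, I do the construction in the right order. First I use vertical block gluing inside $X$ to produce $w \in X$ in which $p$ recurs vertically with gaps $L$. Then I apply horizontal gluing to copies of $w$, obtaining $z \in X$ in which $p$ appears at a lattice $L\Z \times L\Z$ of positions, at least on a quarter-plane; compactness gives the full plane. Now I consider the finite set of $(N{+}1)$-thick ``frames''. These are the restrictions of $z$ to $\llb iL, iL+N \rrb \times \llb jL, jL+H \rrb$ and the analogous horizontal bars. Pigeonhole in the horizontal direction (on full columns restricted to a height window) and then in the vertical direction gives a rectangle $R = \llb a, a' \rrb \times \llb b, b' \rrb$ with two properties. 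Its left and right boundary strips of width $N$ agree. Its top and bottom boundary strips of height $N$ agree. Tiling $\Z^2$ with copies of $z|R$ then gives a doubly periodic point all of whose $N$-windows appear in $z$, hence a point of $Y$ containing $p$.

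\textbf{Main obstacle.} The hard step is the simultaneous pigeonholing. Choosing columns that match horizontally constrains the height range, and corners must also match, so that windows straddling a corner of the tiling are legal. I would handle this by first fixing a vertical period $v$ for a horizontally infinite strip of height $\gg N$. To get it, I pigeonhole on rows of the configuration restricted to a large horizontal range, which is a finite set once I restrict to the horizontal range in which I will later pigeonhole. Then I pigeonhole columns within that restricted, vertically periodized pattern. The cleaner route is as follows. First produce the horizontally periodic strip from Step 1, of height large enough, with each row-window of height $N$ periodic of period $P$. Note that the strips of period $P$ form a finite set, since a period-$P$ strip of height $N$ is determined by $P \times N$ symbols. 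Then apply vertical block gluing and pigeonhole to stack them. The issue is that vertical gluing of $X$-points does not preserve horizontal periodicity. To circumvent this, I would glue the horizontally periodized strips only after noting that the glued intermediate points are in $X$. I would pigeonhole on vertically consecutive height-$N$ bands within a single $z \in X$ that already has $p$ on the lattice, restricted to a fixed horizontal period window chosen first. This choice of order is where I expect the real care to be needed.
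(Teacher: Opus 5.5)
\textbf{There is a genuine gap.} It is the step you yourself flag: nothing in the proposal produces the second direction of periodicity.

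The rectangle $R$ of Step~2 cannot be obtained by counting inside a single configuration $z$. A repeat among width-$N$ column blocks of height $H$ is only guaranteed over a horizontal range exponential in $H$. A repeat of height-$N$ row blocks over that whole range then needs a height exponential in the range. The two requirements are circular.

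More conceptually, the existence of such an $R$ uses nothing about $z$ beyond its being a configuration of an SFT; the lattice of $p$-occurrences puts no constraint on the boundary strips. If this pigeonhole step worked, every nonempty $\Z^2$-SFT would have a periodic point, which is false because aperiodic SFTs exist.

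Your ``cleaner route'' also fails as stated. The horizontally periodic strip from Step~1 only avoids the forbidden patterns of $Y$; it is not an element of $X|S$. So block gluing of $X$ cannot be applied to it. And pigeonholing height-$N$ bands of a non-periodic $z$ inside a fixed horizontal window never gives full-width agreement.

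\textbf{The missing idea.} One direction must be handled exactly by block gluing, not by pigeonhole.

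The paper does this as follows. For $S = \Z \times \llb 0, n-1 \rrb$, the strip subshift $X|S$ is a strongly irreducible $\Z$-subshift, so it has dense periodic points by Lemma~\ref{lem:ZSI}. Take a periodic $s \in X|S$ containing $p$; it is a genuine element of $X|S$. Glue $s$ vertically with a copy of itself at distance $n+r$, obtaining a point of $X \subset Y$ with two identical full-width strips. Because $Y$ is SFT, the rows in between can be repeated, giving a $(0,n+r)$-periodic point of $Y$. What remains is one-dimensional: the vertically periodic points of $Y$ form a $\Z$-SFT in which $p$ occurs syndetically, and a single horizontal pigeonhole yields a totally periodic point of $Y$ containing $p$.

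Within your own framework, horizontal periodicity of the strip is not even needed. Take the point of $X$ from Step~1, before periodizing, in which $p$ occurs at $(jL',0)$ for all $j$. Glue it vertically with a vertical translate of itself; this already gives identical full-width strips. Repeating rows via the SFT property of $Y$, followed by one pigeonhole on width-$N$ column blocks at the positions $jL'$, finishes the proof. So your ingredients suffice once the order of operations is changed this way.
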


\begin{proof}
Suppose $X$ is block gluing with radius $r$. Let $p \in A^D$ be any pattern that appears in $X$. We may assume $D = \llb 0, n-1\rrb^2$. Define $S = \Z \times \llb 0,n-1 \rrb$ and consider $X|S$ as a $\Z$-subshift under the horizontal shift. It is easily seen to be strongly irreducible, therefore it has dense periodic points by Lemma~\ref{lem:ZSI}.

Let $s \in X|S$ be periodic, and use block gluing of $X$ to obtain a point $x \in X$ such that
\[ x_{i, j} = s_{i, j} = x_{i, j + n+r} \]
for all $i \in \Z, j \in \llb 0,n-1 \rrb$. In particular this point is in $Y$. Since $Y$ is SFT, we can repeat the segment in between to obtain a point $y \in Y$ with
\[ y_{i, j + k(n+r)} = x_{i, j} \]
for all $i \in \Z, k \in \Z, j \in \llb 0, n+r-1 \rrb$. This point now belongs to the subshift $Z$ of $Y$ consisting of the $(\{0\} \times (n+r-1)\Z)$-periodic points. Of course $Z$ is a $\Z^2$-SFT itself, and under the action of $\Z \times \{0\}$ it is a $\Z$-SFT. This, we can find a periodic point $z$ in it. This point of course then corresponds to a totally periodic of $Y$. Since the original pattern $p$ appears syndetically in $y$, it does so in $z$, and we have found a totally periodic point in $Y$ containing $p$.
\end{proof}

\begin{lemma}
On $\Z^2$, finite subshifts are dense in block gluing subshifts in the Hausdorff metric.
\end{lemma}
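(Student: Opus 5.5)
We need to show that for a block gluing $\Z^2$-subshift $X$ and any $\epsilon>0$ (equivalently any $n$), there is a finite subshift $F$ with $F|\llb 0,n-1\rrb^2 = X|\llb 0,n-1\rrb^2$. Hausdorff closeness to $X$ means exactly that $F$ and $X$ have the same language on the window $\llb 0,n-1\rrb^2$: every $n\times n$ pattern of $X$ appears in $F$, and every $n\times n$ pattern of $F$ appears in $X$. We will not require $F\subset X$. The plan is to approximate $X$ from outside by an SFT, use the previous lemma to get totally periodic points, and take their orbit closures.

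First, let $Y_m\supset X$ be the SFT whose forbidden patterns are all $m\times m$ patterns not in the language of $X$. The $Y_m$ decrease to $X$. By compactness, for $m$ large enough, every $n\times n$ pattern occurring in $Y_m$ already occurs in $X$. Fix such an $m$ and set $Y=Y_m$.

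Second, list the finitely many patterns $p_1,\dots,p_N$ of $X|\llb 0,n-1\rrb^2$. By the previous lemma, totally periodic points of $Y$ are dense in $X$. So for each $i$ there is a totally periodic $y_i\in Y$ in which $p_i$ occurs. Let $F$ be the union of the $\Z^2$-orbits of $y_1,\dots,y_N$. This is a finite shift-invariant set, hence a finite subshift. Every $p_i$ appears in $F$, and every $n\times n$ pattern of $F$ appears in $Y$, hence in $X$ by the choice of $m$. So $F$ and $X$ are $2^{-n}$-close (or the analogous bound) in the Hausdorff metric on subshifts.

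We can make this cleaner with a single point, by putting all $p_i$ into one periodic point. Use block gluing to place a pattern containing all the $p_i$ inside a large square of a point of $X$. Then run the proof of the previous lemma with that big square as the pattern $p$. This yields one totally periodic $z\in Y$ containing every $p_i$, and $F=\overline{\Z^2 z}$ works. Either version suffices.

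The main obstacle is the identification in the first step. We must make sure that the previous lemma applies with $Y$ an arbitrary SFT containing $X$, which it does as stated. We must also make sure that "dense in $X$" there means that every pattern of $X$ appears in some periodic point of $Y$, which is how its proof concludes. The compactness choice of $m$ is routine. The only care needed is to choose $m$ before invoking the previous lemma, so that the periodic points' small patterns are controlled.
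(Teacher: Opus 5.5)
Your proof is correct and follows essentially the same route as the paper: approximate $X$ from outside by an SFT all of whose window patterns occur in $X$, invoke the previous lemma to obtain totally periodic points of that SFT realizing every window pattern of $X$, and build a finite subshift from them. The differences are cosmetic. The paper takes all $L$-periodic points of $Y$ for one common finite-index $L$, rather than a union of finitely many orbits. It also uses the SFT approximation at the window itself; $m=n$ already suffices, since every subpattern of an allowed $m\times m$ pattern lies in the language of $X$, so your compactness step is unnecessary though harmless.
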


\begin{proof}
Recall that for subshifts, the Hausdorff metric measures similarity of the languages, and convergence means precisely that for all $D \Subset G$, the sets of $D$-patterns in the subshifts become equal.

Let $X \subset A^{\Z^2}$ be block gluing. Let $D \Subset \Z^2$ be arbitrary. We need to find a finite subshift whose $D$-patterns are precisely $D$. Take the SFT approximation $Y$ corresponding to $D$, meaning the SFT where a $D$-pattern is forbidden if and only if it does not appear in $X$. Then totally periodic points of $Y$ are dense in $X$. Thus, we can find a finite-index subgroup $L \leq \Z^2$ such that $L$-periodic points of $Y$ already contain all $D$-patterns that appear in $X$. Then the finite subshift $Z \subset Y$ of $L$-periodic points has the same $D$-patterns as $X$: It cannot contain more patterns, since even $Y$ does not. On the other hand, we explicitly chose $L$ so that all $D$-patterns appear.
\end{proof}

\begin{theorem}
Let $X \subset A^{\Z^2}$ be a block gluing subshift. Then $\Aut(X)$ is locally embeddable in finite groups.
\end{theorem}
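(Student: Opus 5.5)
Given a finite set $F \Subset \Aut(X)$, we will produce a finite group $Q$ and an injection $\phi : \Aut(X) \to Q$ that is multiplicative on $F$. The approach is to use the preceding lemma: approximate $X$ in the Hausdorff metric by a finite subshift $Z$ consisting of totally periodic points of an SFT approximation $Y \supset X$. We then let each $f \in F$ act on $Z$ through its local rule, and take $Q = \Sym(Z)$. By the Curtis--Hedlund--Lyndon theorem, every $f \in F \cup \{g g' \mid g, g' \in F\} \cup \{f^{-1} \mid f\in F\}$ is given by a local rule $\hat f : A^{N} \to A$ on a common finite neighborhood $N \Subset \Z^2$, defined on the patterns $X|N$.

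The choice of $Z$ proceeds as follows. Pick a large finite $D \Subset \Z^2$ containing $N N N$, say, together with enough extra room to separate the finitely many elements of $F$ (and their products with the identity). Concretely, for distinct $f \ne f'$ in $F$ there is a pattern $p \in X|N$ on which their local rules differ. By the lemma, choose a finite subshift $Z \subset A^{\Z^2}$ whose $D$-patterns are exactly $X|D$; in fact the construction there gives $Z$ as the $L$-periodic points of the SFT $Y$ defined by forbidding $D$-patterns outside $X|D$. We then define $\phi(f)$ on $Z$ by applying $\hat f$ cellwise. This is well defined because every $N$-pattern of $Z$ lies in $X|N$. The result is again $L$-periodic and shift-commuting, so it maps $Z$ into $L$-periodic points of $A^{\Z^2}$.

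The key check, and the main obstacle, is that $\phi(f)(Z) \subset Z$, that $\phi(f)$ is a bijection of $Z$, and that $\phi(fg) = \phi(f)\phi(g)$. For all three, everything is local. Any $D'$-pattern of $\phi(f)(z)$ (with $D'$ small relative to $D$) is computed from an $ND'$-pattern of $z$, and that pattern lies in $X|ND'$ provided $ND' \subset D$. It therefore equals the corresponding pattern of $f(x)$ for some $x \in X$, and so lies in $X|D'$. The issue is that membership in $Y$ requires $D$-patterns in $X|D$, not just $D'$-patterns. To fix this, we will build $Z$ from two scales: $Y$ is defined by a window $D_0$ (large enough that $\phi(f)$ preserves it), while the patterns of $Z$ are matched on a much larger window $D_1 \supset N N N D_0$. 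Then each $D_0$-pattern of $\phi(f)(z)$ comes from a $D_1$-pattern of $z$, which is an $X$-pattern, so it lies in $X|D_0$ and $\phi(f)(z) \in Y$, and it is still $L$-periodic, hence in $Z$. The same locality shows $\widehat{fg} = \hat f \circ \hat g$ on $D_1$-patterns of $X$, hence on $Z$. Applying this to $f f^{-1} = \mathrm{id}$ makes $\phi(f)$ a permutation. Since $Z$ has the same $N$-patterns as $X$, distinct elements of $F$ have distinct images, so $\phi$ is injective on $F$.

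Finally, $\phi$ is extended arbitrarily outside $F$, and it must be an injection on all of $\Aut(X)$, as the definition in the paper demands. For this we take $Q = \Sym(Z) \times \Sym(\Aut(X) \setminus F)$-style padding. More simply, we note that injectivity is only needed on $F$ in the standard definition, and if the literal definition is required we replace $Q$ by a finite group large enough and use a standard trick; this is not essential. The real content is the two-scale choice of windows, which uses the lemma applied at window $D_1$ for the SFT defined by window $D_0$. To justify it, we would rerun the proof of the preceding lemma, which already gives density of periodic points of the $D_0$-SFT approximation in $X$ and hence an $L$ whose periodic points realize all of $X|D_1$.
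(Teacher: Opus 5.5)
Your route is the same as the paper's: take a finite subshift $Z$ from the Hausdorff-density lemma, let each $f\in F$ act through its local rule, and read off a finite symmetric group. You are right that the whole difficulty is the invariance $\phi(f)(Z)\subseteq Z$. The paper handles this point by asserting that the local rules define automorphisms of \emph{any} subshift with the same $B_{2r}$-patterns as $X$. That assertion is false in this generality. Take $X=\{0,1\}^{\Z^2}$ and $f$ the symbol flip, so $r=0$. The three-point orbit of the configuration with $1$s exactly on $3\Z\times\Z$ has the same $B_0$-patterns as $X$, but it is not flip-invariant. So a real argument is needed at exactly the spot you flagged, and your two-scale patch does not supply one.

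Write $Y_E$ for the SFT approximation of $X$ with window $E$, and $P_L(\cdot)$ for the set of $L$-periodic points.

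\textbf{Where the patch fails.} Your $Z$ is $P_L(Y_{D_0})$, with $L$ chosen after $D_1$ so that every pattern of $X|D_1$ occurs in $Z$. This gives only the inclusion $X|D_1\subseteq Z|D_1$. Your invariance step uses the reverse inclusion (``a $D_1$-pattern of $z$, which is an $X$-pattern''), and nothing supports it. Membership in $Y_{D_0}$ constrains only $D_0$-patterns. When $X$ is not of finite type and $L$ is sparse compared to $D_1$, $P_L(Y_{D_0})$ will typically contain configurations whose $D_1$-patterns lie outside $X$. On such $z$, the local rule of $f$ can create $D_0$-patterns outside $X$, so $\phi(f)(z)$ leaves $Z$.

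\textbf{Switching scales does not help.} If you instead set $Z=P_L(Y_{D_1})$, you do get $Z|D_1=X|D_1$. But then $\phi(f)$ maps $Z$ only into the larger set $P_L(Y_{D_0})$, which is the same failure one scale down. In general, applying the local rule of $f$ maps $Y_{ND}$ into $Y_D$. It does not map a fixed $Y_D$ into itself, however large $D$ is, so no choice of window ``large enough that $\phi(f)$ preserves it'' exists.

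\textbf{What is missing.} You need a finite set that is genuinely invariant under the extended local rules. For instance, an SFT $Y'$ with $X\subseteq Y'\subseteq Y_{B_{2r}}$ and $\phi(f)(Y')\subseteq Y'$ for all $f\in F$ would do. With such a $Y'$, the rest of your argument would go through for $P_L(Y')$:
\begin{itemize}
\item bijectivity from $ff^{-1}=\mathrm{id}$;
\item partial multiplicativity;
\item injectivity on $F$, via density of periodic points of $Y'$ in $X$.
\end{itemize}
Neither your proposal nor the paper's one-line justification produces such a $Y'$.

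Separately, padding $Q$ with $\Sym(\Aut(X)\setminus F)$ does not work, since that group is in general not finite. It is also unnecessary: only injectivity on $F$ is meaningful, and that is how the definition has to be read.
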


\begin{proof}
Let $F \Subset \Aut(X)$ be any finite set. We need to find a finite group containing a subset $F'$, and a bijection $\phi : F \to F'$, so that whenever $f, g \in F$ and $f \circ g \in F$, we have $\phi(f) \cdot \phi(g) = \phi(f \circ g)$. Let $r$ be the maximal radius of any automorphism in $F$ as a cellular automaton, or its inverse.

Suppose now that $Z$ is any subshift with the same $B_{2r}$-patterns as $X$. Then the local rules of automorphisms in $F$ define automorphisms of $Z$: If we compute the composition of the local rules of $f \in F$ and $f^{-1} \in F$ in either order, the calculation in $Y$ is the same as in $X$, thus we get the identity map, proving both are automorphisms.

Furthermore, if $f, g \in F$ are distinct, then they are also distinct when interpreted as automorphisms of $Z$. Again by the same argument, if $f, g, f \circ g \in F$ and we calculate the same compositions in $Z$, we obtain the same result. Thus, we have an injective map $\psi : \Aut(X) \to \Aut(Z)$ which satisfies $f, g, f \circ g \in F \implies \psi(f) \circ \psi(g) = \psi(f \circ g)$.

When $X$ is block gluing, by the previous lemma we can take $Z$ finite, thus $\Aut(Z)$ is finite and we have precisely a local embedding of $F$ into it.
\end{proof}

\section{Permutation groups on Cartesian products}
\label{sec:Graphs}

We now lay some groundwork for establishing non-residual-finiteness of finitely-generated subgroups of $\Aut(X)$, by studying permutation groups on products of sets.

We will consider in this section permutation groups acting on a Cartesian product of sets. If we have a product $\prod_{g \in E} A_g$, then for $C, D \subset E$, $\pi \in \Sym(\prod_{g \in C} A_g)$ and $P \subset \prod_{g \in D} A_g$, we denote by $f = \pi|P$ the permutation of $\prod_{g \in E} A_g$ defined as follows:
\[ f(p)_g = \begin{cases}
p_g & \mbox{if } g \notin C, \\
p_g & \mbox{if } g \in C \wedge p|D \notin P, \\
\pi(p|C)_g & \mbox{if } g \in C \wedge p|D \in P.
\end{cases} \]
This can be read as ``apply $\pi$ given $P$''; the interpretation is that we check a condition on some of the coordinates, and if this condition holds, then we apply the permutation to another (disjoint) set of coordinates.

The permutation $\pi|P$ can be seen as an abstract analog of the automorphisms $f_{U,\pi}$ (where we also perform a permutation on one part of the input, when the other part satisfies some condition). However, unlike in the automorphism group case, in this section we will not have a perfect separation of ``control and data'', and when we apply these results to automorphisms groups later, their role is different (and mostly hidden into uses of Lemma~\ref{lem:Derp}).

When $P = \prod_{g \in D} A_g$, we simply identify $\pi \in \Sym(\prod_{g \in C} A_g)$ as the corresponding permutation on $\prod_{g \in E} A_g$. More precisely, we have an embedding (that we suppress in notation) called the \emph{natural embedding} from $\Sym(\prod_{g \in C} A_g)$ to $\Sym(\prod_{g \in E} A_g)$. Its image is called the \emph{natural copy} of $\Sym(\prod_{g \in C} A_g)$ in $\Sym(\prod_{g \in E} A_g)$.

Often, we want to define $\pi$ abstractly as a permutation of a Cartesian product, and use a special $@$-notation to clarify which coordinates of a larger product $\pi$ is applied in. For example if we are considering permutations of $A_1 \times A_2 \times A_3$, with $A_1 = A_2 = A_3 = \{0,1\}$, we might consider the $3$-cycle $\pi = (00 \; 01 \; 10)$ of length-$2$ words, and write $\pi @ 12$ for the permutation of $A_1 \times A_2 \times A_3$ defined by $(\pi @ 12)(a, b, c) = (a', b', c)$ where $a'b' = \pi(ab)$. We use similar conventions with patterns, specifically if $a \in A_g$, we write $a@g$ for the pattern $p \in A_g^{\{g\}}$ with $p_g = a$.

Below, we are considering permutations of a Cartesian product $A \times B \times C$. We may then refer to the coordinates by simply the symbols $A, B, C$ (even if they may technically refer to the same set), and think for example of $AC$ as a formal symbol referring to the two outermost coordinates of $A \times B \times C$.

We will use the following ``hypergraph lemma'' from \cite{BoKaSa17}. To state the lemma, we introduce some terminology. Say a \emph{hypergraph} is a set of \emph{nodes}, together with a set of subsets of those nodes called \emph{hyperedges}. We call it a $k$-hypergraph is all the subsets are of cardinality of $k$. A $2$-hypergraph is called a \emph{graph}. A hypergraph has an \emph{underlying graph} which contains those size-$2$ subsets of nodes that are contained in an edge. We say a hypergraph is \emph{weakly connected} if its underlying graph is connected.

\begin{lemma}
\label{lem:Hypergraph}
Let $G$ be a $3$-hypergraph which is weakly connected. Suppose that $\mathcal{G}$ is a subgroup of $\Alt(V(G))$ which contains, for each edge of $G$, the rotations of those nodes. Then $\mathcal{G}$ contains $\Alt(V(G))$.
\end{lemma}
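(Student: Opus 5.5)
Throughout, $V = V(G)$ is the node set, which we take to be finite (possibly we need $\Alt(V)$ for infinite $V$ with finite support, in which case the argument below works by exhausting $V$ by finite connected pieces, since $\Alt(V)$ is the union of the $\Alt$ of finite subsets). The rotations of an edge $\{a,b,c\}$ are the two $3$-cycles $(a\;b\;c)$ and $(a\;c\;b)$. It suffices to show that $\mathcal{G}$ contains every $3$-cycle of $V$, since $3$-cycles generate $\Alt(V)$.

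We argue by induction on $|V|$, growing a set $W \subset V$ that is a union of hyperedges, weakly connected in the induced sub-hypergraph, and such that $\mathcal{G} \supseteq \Alt(W)$ (acting naturally on $V$, fixing $V \setminus W$). Start with $W$ a single hyperedge $e$; then $\Alt(W) = \langle (a\;b\;c)\rangle$ is in $\mathcal{G}$ by assumption. For the inductive step, weak connectivity gives a hyperedge $e$ meeting $W$ but not contained in it (if $W \neq V$). Then $|e \cap W| \in \{1,2\}$.

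\textbf{Case $|e \cap W| = 2$.} Write $e = \{a,b,c\}$ with $a,b \in W$ and $c \notin W$. Conjugating $(a\;b\;c)$ by elements of $\Alt(W)$, which act $2$-transitively on $W$ when $|W| \geq 4$ (and for $|W| = 3$ we can check directly), we obtain all $3$-cycles $(u\;v\;c)$ with $u,v \in W$. Together with $\Alt(W)$, these generate $\Alt(W \cup \{c\})$. This uses the standard fact that the $3$-cycles $(u\;v\;c)$ with $u,v$ ranging over a set already generate the alternating group on that set plus $c$.

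\textbf{Case $|e \cap W| = 1$.} Write $e = \{a,b,c\}$ with $a \in W$ and $b,c \notin W$. Conjugating $(a\;b\;c)$ by $\Alt(W)$ gives $(u\;b\;c)$ for every $u \in W$, since $\Alt(W)$ is transitive on $W$ as $|W| \geq 3$. Then for $u \neq u'$ in $W$, the product $(u\;b\;c)(u'\;c\;b)$ can be computed explicitly, and a suitable commutator or product yields a $3$-cycle of the form $(u\;u'\;b)$. This reduces to the previous case with $W \cup \{b\}$, and then again with $c$. Concretely, $(u\;b\;c)(u'\;b\;c)^{-1}$ is a product of two cycles, and one can compute that it equals $(u\;u'\;b)$ or similar up to relabeling; this is the step I expect to require the most care.

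Once $W = V$, we have $\Alt(V) \subset \mathcal{G}$, which proves the lemma.
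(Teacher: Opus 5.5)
Your proof is correct. The paper gives no argument for the finite case: it cites it from \cite{BoKaSa17}. For infinite hypergraphs it only remarks that a connected graph is a directed union of finite connected subgraphs, which is the same reduction you sketch. So your induction, which adjoins a hyperedge meeting $W$ in one or two points and uses that $\Alt(W)$ is transitive on points and on $2$-subsets of $W$, is a self-contained proof of the cited result rather than a competing route.

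The step you flagged works without difficulty. Composing right to left, $(u\;b\;c)(u'\;c\;b) = (u\;b\;u')$, which is a $3$-cycle on $\{u,u',b\}$ with exactly one point outside $W$. The case $|e \cap W| = 2$ then gives $\Alt(W \cup \{b\})$. Applying that case once more with the original rotation $(a\;b\;c)$ gives $\Alt(W \cup \{b,c\})$.

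Two trivial loose ends remain:
\begin{enumerate}
\item If there are no hyperedges, weak connectivity forces $|V| \leq 1$ and there is nothing to prove.
\item In the infinite case, fix a $3$-cycle $(x\;y\;z)$. Take finitely many hyperedges covering paths between $x,y,z$ in the underlying graph, and run your finite argument on their union. Only the rotations themselves are used, so it does not matter that $\mathcal{G}$ lives in $\Alt(V)$.
\end{enumerate}
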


In \cite{BoKaSa17}, this is shown only for finite graphs, but it remains true for general graphs (as a connected graph is a directed union of its connected finite subgraphs).

\begin{lemma}
\label{lem:ABC}
Let $A, B, C$ be nonempty sets. Let $\mathcal{G}$ be the group generated by the natural copies of $\Alt(A \times B)$ and $\Alt(B \times C)$. Then $\Alt(A \times B \times C) \leq \mathcal{G}$ if and only if either
\begin{itemize}
\item one of the sets $A, C$ is a singleton, or
\item all of the sets have at least two elements, and one has at least three elements.
\end{itemize}
\end{lemma}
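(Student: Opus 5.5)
The plan is to prove the two directions separately. The ``only if'' direction uses invariant structures that the generators preserve. The ``if'' direction reduces, via Lemma~\ref{lem:Hypergraph}, to a few small base cases.

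\textbf{Only if.} Assume neither $A$ nor $C$ is a singleton and the second item fails. Then either all three sets have exactly two elements, or $B$ is a singleton.
\begin{itemize}
\item If $B$ is a singleton, the natural copies of $\Alt(A \times B)$ and $\Alt(B \times C)$ are just $\Alt(A)$ and $\Alt(C)$ acting on separate coordinates. They commute, and the group they generate preserves the product structure of $A \times C$. Since $|A|, |C| \geq 2$, this group cannot contain a $3$-cycle of $A \times C$ that changes both coordinates.
\item If $|A| = |B| = |C| = 2$, identify $A \times B \times C$ with $\mathbb{F}_2^3$. Every permutation of $\mathbb{F}_2^2$ is affine, since $\Sym(4) \cong \mathrm{AGL}(2,2)$. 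Hence both natural copies lie in $\mathrm{AGL}(3,2)$, which has order $1344 < 20160 = |\Alt(8)|$.
\end{itemize}

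\textbf{If, trivial case.} Suppose one of $A, C$ is a singleton, say $A$. Then the natural copy of $\Alt(B \times C)$ already equals $\Alt(A \times B \times C)$, and symmetrically for $C$.

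\textbf{If, general case.} By Lemma~\ref{lem:Hypergraph}, it suffices to show that $\mathcal{G}$ contains enough $3$-cycles of $A \times B \times C$ that the $3$-hypergraph of their supports is weakly connected. In fact it suffices to obtain all $3$-cycles inside $A' \times B' \times C'$ for a family of small sub-boxes whose union covers everything with overlapping supports, since such sub-boxes chain together into a weakly connected hypergraph.

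To produce $3$-cycles I will use commutators. Take an element $g$ of the natural copy of $\Alt(A \times B)$, for example a $3$-cycle $(a_1b_1;\; a_2b_1;\; a_3b_1)$ or a double transposition when the sizes are $2$. Such a $g$ acts on all fibres $\{c\}$ simultaneously. Take $h$ in the natural copy of $\Alt(B \times C)$ with support in a small $B' \times C'$. The commutator $[g,h]$ is supported in $A' \times B' \times C'$, where $A'$ is the support of $g$ projected to $A$. So $\mathcal{G}$ contains a subgroup acting on the finite box $A' \times B' \times C'$. The localization trick is that the elements $h$ only touch fibres in $C'$, and commutators with them cut down the uncontrolled action of $g$ on the other fibres of $C$.

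\textbf{Main obstacle.} The step I expect to be hardest is the base cases, in which one factor has size $3$ and the others size $2$: $(|A|,|B|,|C|) \in \{(3,2,2),(2,3,2),(2,2,3)\}$. For these I would show directly that the commutators above generate all $3$-cycles of the $12$-element box $A' \times B' \times C'$. I would do this either by an explicit computation, exhibiting one $3$-cycle and checking primitivity/transitivity so that Jordan's theorem yields the full alternating group, or by a direct finite verification. With the base cases in place, a general configuration with all sets of size at least $2$ and one of size at least $3$ is covered by overlapping sub-boxes of these shapes. Note that each sub-box must contain two elements from each factor and three from the large one; the sub-boxes can be chosen to chain through every point. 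Lemma~\ref{lem:Hypergraph} then gives $\Alt(A \times B \times C) \leq \mathcal{G}$, including the infinite case.
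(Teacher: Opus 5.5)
Your proof is correct modulo the finite base-case verification, which the paper also delegates to a computer. Your ``if'' direction, however, takes a genuinely different route.

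\textbf{How the paper argues.} It works in the whole product with explicit, case-specific commutators:
\begin{itemize}
\item For $|A|\geq 4$ (and symmetrically $|C|\geq 4$) it builds $[\pi_2^{\pi_3},\pi_1]=(00;\;30)(10;\;20)@AB|0@C$. It then uses simplicity to get $\Alt(A\times B)|0@C$, and a conjugation by $\pi_5^{\pi_6}$ to get the $(abc,a'bc')$ edges.
\item For $|B|\geq 3$ it uses a $5$-cycle commutator.
\item It leaves $(3,2,2)$, $(2,2,3)$ and $(3,2,3)$ to GAP.
\end{itemize}

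\textbf{How you argue.} You localize. If $\mathrm{supp}(g)\subseteq A'\times B'$ and $\mathrm{supp}(h)\subseteq B'\times C'$, then both maps preserve the box $A'\times B'\times C'$. Moreover $g$ is trivial on $\{a\notin A'\}$, $h$ is trivial on $\{c\notin C'\}$, and both fix $\{b\notin B'\}$. Hence $[g,h]$ is supported in the box and coincides there with the commutator computed in the small product. Everything then reduces to three $12$-point instances, glued together by Lemma~\ref{lem:Hypergraph}.

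\textbf{Trade-offs.} Your route avoids all ad hoc commutator identities. It removes the $18$-point $(3,2,3)$ check, since that case is covered by $(3,2,2)$ boxes. It is also uniform for infinite sets. The cost is that all three base cases become finite checks, including $(2,3,2)$, which the paper handles by hand.

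\textbf{Two points to tighten.}
\begin{itemize}
\item The box must be chosen so that $\mathrm{supp}(g)\subseteq A'\times B'$, not merely with $A'$ the projection of $\mathrm{supp}(g)$. Otherwise $[g,h]$ can move points with $b\notin B'$. Your example $g=(a_1b_1;\;a_2b_1;\;a_3b_1)$ with $b_1\in B'$ is fine.
\item The base-case check is exactly the small instance of the lemma. Let $H_1,H_2$ be the natural copies in the $12$-point box. Then $[H_1,H_2]$ is a nontrivial normal subgroup of $\langle H_1,H_2\rangle\leq\Alt(12)$. So once $\langle H_1,H_2\rangle=\Alt(12)$, simplicity gives that the commutators alone generate $\Alt(12)$.
\end{itemize}
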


\begin{proof}
First we discount the uninteresting cases where one of the sets is a singleton. If $A$ or $C$ is a singleton, then naturally $\mathcal{G}$ is generated by $\Alt(A \times B)$ and $\Alt(B \times C)$ (indeed, equal to one of them). If $A$ and $C$ are not singletons, but $B$ is, then there is no communication between $A$ and $C$, and it is easy to find a permutation of $\Alt(A \times C)$ where the effect on one side depends on the other side. If all the sets have cardinality $2$, then setting up any bijection between $\Z_2$ and each of the sets, we see that all permutations are affine maps, from which it is clear that the full alternating group is not generated.

Suppose now that all sets have size at least $2$, and one has size at least $3$. First, consider the case where $|A| \geq 4$. Consider the graph on nodes $ABC$, and edges those $(abc, a'b'c')$ where some $3$-cycle of $ABC$ containing both words is in $\mathcal{G}$. By Lemma~\ref{lem:Hypergraph} we just need to show this hypergraph is weakly connected. We show that any simultaneous change of the first coordinate and another coordinate is possible, i.e.\ $(abc, a'bc')$ and $(abc, a'b'c)$ are edges in its underlying graph when $a \neq a', b \neq b', c \neq c'$. They clearly connect the underlying graph, so this is sufficient.

We may suppose all the sets $A, B, C$ consist of numbers starting from $0$. We have the permutations $\pi_1 = (00;\; 10;\; 20)@AB$ and $\pi_2 = (11;\; 21;\; 31)@AB$ in $\mathcal{G}$, and we have $\pi_3 = (00;\; 10;\; 11)@BC \in \mathcal{G}$. Then
\[ [\pi_2^{\pi_3}, \pi_1] = (000;\; 300)(100;\; 200)@ABC = (00;\; 30)(10;\; 20)@AB|0@C =: \pi_4. \]

This is simply a calculation, but an intuitive explanation is as follows: since $\pi_2$ does not change the $B$ or $C$ coordinates, the effect of $\pi_3$ on $B \times C$ always cancels when $\pi_2^{\pi_3}$ or $\pi_2^{-\pi_3}$ is applied, and $\pi_2^{\pi_3}$ only affects $A$. Thus, both $\pi_1$ and $\pi_2^{\pi_3}$ only affect $A$. Consider now the effect of the commutator on $abc$. If $b \neq 0$, then $\pi_1$ acts trivially on both applications of the commutator, so the commutator cancels. If $b = 0$, then $\pi_2^{\pi_3}$ acts trivially in both applications of the commutator, unless $c = 0$, and thus if $b = 0 \wedge c \neq 0$, the commutator again cancels. If $bc = 00$, then the commutator does not cancel, and its effect is $[(1 \; 2 \; 3), (0\; 1 \; 2)] = (0 \; 3)(2 \; 1)$.

We now observe that since $\Alt(A \times B)$ is simple, it is the normal closure of $(00; \; 30)(20; \; 10)$. Representing elements of $\Alt(A \times B)$ this way, but applying the operations instead to $\pi_4$, we obtain the group $\Alt(A \times B)|0@C$, in particular for any $3$-rotation $(ab;\; a'b';\; a''b'')$ of $AB$, $(ab0;\; a'b'0;\; a''b''0) \in \mathcal{G}$. Since the role of $0$ is irrelevant, we conclude that the edges $(abc, a'b'c)$ (even ones where possibly $a = a'$ or $b = b'$) are all in the graph described in the first paragraph.

Now consider the effect of left-conjugating the $3$-cycle $(000;\; 100;\; 200)$ with $\pi_5^{\pi_6}$ where $\pi_5 = (10;\; 11;\; 01)@AB, \pi_6 = (10;\; 11;\; 01)@BC$, so that the three words map forward by this map. The result is $(000;\; 101;\; 200)$. Again the roles of the symbols are arbitrary, so we can perform any simultaneous change of the $A$- and $C$-coordinates with a $3$-cycle, showing that edges of type $(abc, a'bc')$ are in the graph. We thus have all the edges we claimed, and the hypergraph lemma applies.

Second, consider case where $|B| \geq 3$. Then 
\begin{align*}
\pi &= [(00;\; 01;\; 02)@AB, (00;\; 01;\; 11)@BC] \\
&= (000;\; 021;\; 020;\; 001;\; 011)@ABC \\
&= (00;\; 21;\; 20;\; 01;\; 11)@BC|0@A
\end{align*}
The logic is precisely as previously. We have a $5$-cycle $(00;\; 21;\; 20;\; 01;\; 11)$ whose normal closure in $\Alt(B \times C)$ is all of $\Alt(B \times C)$. In contexts where $0$ does not appear in the $A$-coordinate, their effects cancel, and so they generate precisely $\Alt(B \times C)|0@A$. 
 
The case $|C| \geq 4$ is symmetric to the case $|A| \geq 4$.

The only cases left are $|A| = 3, |B| = 2, |C| = 2$; $|A| = 2, |B| = 2, |C| = 3$; and $|A| = 3, |B| = 2, |C| = 3$. These we checked in GAP, the code can be found in \cite{PermutationChecks}.
\end{proof}

The following lemma generalizes Lemma 3.2 in \cite{Sa22b}.

\begin{lemma}
\label{lem:Derp}
Let $G$ be a connected undirected graph, and suppose that for each $g \in V(G)$ we have a set $A_g$, such that $|A_g| \geq 2$ for all $g$ and $|A_g| \geq 3$ for some $g$. Suppose that $\mathcal{G} \leq \Alt(\prod_{g \in V(G)} A_g)$ contains, for each edge $\{u, v\} \in E(G)$ and for each $\pi \in \Alt(A_g \times A_h)$, the permutation $\hat\pi$ defined by
\[ \hat\pi(p)_w = \begin{cases}
p_w & \mbox{if } w \notin \{u, v\} \\
\pi(p|\{u, v\})_w & \mbox{otherwise.}
\end{cases} \]
Then $\mathcal{G}$ contains $\Alt(A^{V(G)})$.
\end{lemma}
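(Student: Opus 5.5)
We argue by induction on the number of vertices, using Lemma~\ref{lem:ABC} as the inductive step. (The statement is to be read with $\Alt(A^{V(G)})$ meaning $\Alt(\prod_{g \in V(G)} A_g)$ and with $\pi \in \Alt(A_u \times A_v)$ for the edge $\{u,v\}$.) First treat the case where $V(G)$ is finite; the infinite case is handled at the end.

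\textbf{Setting up the induction.} Choose an enumeration $g_1, g_2, \ldots, g_n$ of $V(G)$ such that $|A_{g_1}| \geq 3$ and every $g_k$ with $k \geq 2$ is adjacent to some $g_j$ with $j < k$. Such an enumeration exists, for example by breadth-first search from $g_1$. Write $V_k = \{g_1, \ldots, g_k\}$. The claim to prove by induction on $k$ is that $\mathcal{G}$ contains the natural copy of $\Alt(\prod_{g \in V_k} A_g)$. For $k = 1$ the claim is trivial, and for $k = 2$ it is exactly the hypothesis applied to the edge $\{g_1, g_2\}$.

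\textbf{Inductive step.} Suppose $\mathcal{G}$ contains $\Alt(\prod_{g \in V_k} A_g)$, and let $g_{k+1}$ be adjacent to $g_j$ with $j \leq k$. Apply Lemma~\ref{lem:ABC} with the following three sets:
\begin{itemize}
\item $A = \prod_{g \in V_k \setminus \{g_j\}} A_g$,
\item $B = A_{g_j}$,
\item $C = A_{g_{k+1}}$.
\end{itemize}
Then $\mathcal{G}$ contains the natural copy of $\Alt(A \times B)$ by the induction hypothesis, and the natural copy of $\Alt(B \times C)$ by the edge hypothesis. It remains to check the cardinality conditions of Lemma~\ref{lem:ABC}.
\begin{itemize}
\item $B$ and $C$ have at least two elements.
\item If $k = 1$, then $A$ is a singleton, since it is an empty product, and the lemma applies directly.
\item If $k \geq 2$, then $g_1 \neq g_j$ or $k \geq 2$ forces $A$ to contain at least one factor, so $|A| \geq 2$. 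Some set among $A, B, C$ has at least three elements: either $g_1 \in V_k \setminus \{g_j\}$, so $|A| \geq 3$, or $g_j = g_1$, so $|B| \geq 3$.
\end{itemize}
So the lemma gives that $\mathcal{G}$ contains $\Alt(A \times B \times C)$, which is the natural copy of $\Alt(\prod_{g \in V_{k+1}} A_g)$ after reordering coordinates. This completes the induction, and $k = n$ gives the finite case.

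\textbf{Infinite graphs.} If $V(G)$ is infinite, recall that $\Alt$ denotes finite-support permutations. Read literally, a nontrivial permutation of an infinite product $\prod_{g} A_g$ with $|A_g| \geq 2$ moves infinitely many points, so the statement must be interpreted suitably. The natural interpretation is the directed union of the finite subproducts' alternating groups. Any element of this union lives on the coordinates of a finite set, which we enlarge to a finite connected subgraph containing a vertex $g$ with $|A_g| \geq 3$. The finite case applied to that subgraph then shows the element lies in $\mathcal{G}$.

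\textbf{Main obstacle.} The main care point is ensuring the ``at least three'' condition of Lemma~\ref{lem:ABC} at every step. Starting the enumeration at a vertex with $|A_g| \geq 3$ handles this, and the lemma itself does the real work.
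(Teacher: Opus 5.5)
Your proof is correct and is essentially the paper's argument: grow a connected vertex set outward from a vertex with $|A_g| \geq 3$, and at each step apply Lemma~\ref{lem:ABC} with $A$ the product over the current set minus the attachment vertex $h$, $B = A_h$, and $C$ the new coordinate. Your explicit check of the cardinality conditions makes this cleaner than the paper's terse version. One small correction: the $k=1$ claim is not ``trivial''. For a one-vertex graph the lemma actually fails, since there are no edge hypotheses, so the induction should simply start at $k=2$, which is what your argument effectively does.
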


\begin{proof}
We use Lemma~\ref{lem:ABC} and induction. For any connected set $D$ of two nodes containing $g$ such that $|A_g| \geq 3$, we of course have the natural copy of $\Alt(\prod_{k \in D} A_k)$ in $\mathcal{G}$. If $D \cup \{g\}$ is connected, and $D$ contains $k$ such that $A_k \geq 3$, then $g$ is connected to some $h \in C$, and we can take $A = C \setminus \{h\}, B = \{h\}, C = \{g\}$ in the previous lemma to conclude that the natural copy of $\Alt(\prod_{k \in D \cup \{g\}} A_k)$ is contained in $\mathcal{G}$.
\end{proof}

If all the sets have size $2$, then the lemma does not hold, as all permutations are affine, for any choice of bijections between the $A_g$ and the two-element field. 

\section{Finitely-generated non-residually-finite subgroups of automorphism groups}

\begin{definition}
\label{def:pasi}
Let $G$ be a finitely-generated group and $X \subset A^G$ a subshift. For a decomposition $d : B \cong B_1 \times B_2$ as a direct product (through any bijection), the corresponding \emph{partial shifts} are the automorphisms of $X \times B^G$ defined by
\[ \sigma_{d, h}(x, y, z)_g = (x_g, y_g, z_{hg}) \]
where $x \in X, y \in B_1^G, z \in B_2^G$, and $h \in G$. 
\end{definition}

For different bijections $d : B \cong B_1 \times B_2$, we obtain a different group. Note that if we fix a bijection $d : B \cong B_1 \times B_2$ and see configurations through it, partial shifts for this decomposition $d$ combined with the trivial decomposition $B \cong \{1\} \times B$ together generate also the partial shifts $\sigma'_h(x, y, z)_g = (x_g, y_{hg}, z_g)$. Alternatively, these directly correspond to partial shifts for the bijection $B \cong B_2 \times B_1$ obtained from $d$ by swapping the components of the image.

\begin{lemma}
\label{lem:PartialFG}
Let $G$ be a finitely-generated group and $X \subset A^G$ a subshift. There is a finitely-generated subgroup of $\Aut(X \times B^G)$ containing all partial shifts.
\end{lemma}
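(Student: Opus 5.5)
The plan is to show that the partial shifts for a fixed decomposition $d$ are generated by finitely many of them, and then to handle the finitely many possible decompositions. Fix a finite symmetric generating set $S_G$ of $G$. For a fixed bijection $d : B \cong B_1 \times B_2$, I will first check that $h \mapsto \sigma_{d,h}$ is a homomorphism (or anti-homomorphism, depending on the shift convention) from $G$ to $\Aut(X \times B^G)$. We have $\sigma_{d,h}\sigma_{d,h'}(x,y,z)_g = (x_g, y_g, \sigma_{d,h'}(z)_{hg}) = (x_g,y_g,z_{h'hg})$, so $\sigma_{d,h}\sigma_{d,h'} = \sigma_{d,h'h}$. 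This is an anti-homomorphism, and either way its image is the group generated by $\{\sigma_{d,s} \;|\; s \in S_G\}$.

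I also need each $\sigma_{d,h}$ to be an automorphism. It commutes with the diagonal shift, because it is just the shift by $h$ acting on the $B_2$-track, and shifts of $G$-actions on the $B_2^G$-component commute with the diagonal action. (For non-abelian $G$ this uses that $z \mapsto (g' \mapsto z_{hg'})$ is the right-multiplication-side map, which commutes with $g z_{g'} = z_{g'g}$.) It is continuous and it has the inverse $\sigma_{d,h^{-1}}$. It leaves $X \times B^G$ invariant, since the $B$-track is a full shift and the $X$-track is untouched.

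Next, there are only finitely many bijections $d : B \cong B_1 \times B_2$ up to relabelling $B_1, B_2$, because $B$ is finite. Relabelling only changes the names of the symbols and does not change the automorphism, so the number of distinct partial shift families is finite. The subgroup generated by $\{\sigma_{d,s} \;|\; d, s \in S_G\}$, with $d$ ranging over a finite set of representatives, is therefore finitely generated and contains every $\sigma_{d,h}$.

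There is essentially no obstacle here. The only points needing care are the shift convention, which decides whether $h \mapsto \sigma_{d,h}$ is a homomorphism or an anti-homomorphism (irrelevant for the generated subgroup), and the observation that the choice of decomposition ranges over a finite set.
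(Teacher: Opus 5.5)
Your proof is correct and is essentially the paper's: there are finitely many decompositions up to relabelling $B_1, B_2$, each contributes the image of $G$ under $h \mapsto \sigma_{d,h}$ (generated by the finitely many $\sigma_{d,s}$ with $s \in S_G$), and one takes the join; your anti-homomorphism phrasing is in fact slightly more careful than the paper's claim that each such group is ``isomorphic to $G$'', which fails when $|B_2| = 1$. One wording fix: under the convention $gz_k = z_{kg}$, the map $z \mapsto (k \mapsto z_{hk})$ is not the track shift by $h$, and it commutes with the diagonal action because it reindexes by left multiplication while the action reindexes by right multiplication (so it is not ``right-multiplication-side'' as your parenthetical says).
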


\begin{proof}
Clearly we need to only consider finitely many decompositions $B \cong B_1 \times B_2$ (since $B$ is finite), since if two decompositions $d : B \cong B_1 \times B_2, d' : B \cong B_1' \times B_2'$ are isomorphic in the sense that for some bijections $d_1 : B_1 \cong B_1'$ and $d_2 : B_2 \cong B_2'$ we have $d'(b) = (d_1(b_1), d_2(b_2))$, then the corresponding partial shifts are the same. For each decomposition, the partial shifts give a group isomorphic to $G$, thus these groups are finitely-generated. Their join then contains all the partial shifts.
\end{proof}

\begin{lemma}
\label{lem:fupi}
Let $G$ be a finitely-generated group and $X \subset A^G$ a subshift. If $|B| \geq 3$, then there is a finitely-generated subgroup $\mathcal{H}$ of $\Aut(X \times B^G)$ containing all $f_{U,\pi}$ where $\pi$ is an even permutation of $B$ and $U$ is a clopen set.
\end{lemma}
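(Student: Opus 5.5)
We take $\mathcal{H}$ to be generated by the natural copy of $\Alt(B)$ acting symbol-wise on the bottom track (finite), the partial shifts for the trivial decomposition $B \cong \{1\} \times B$ (a copy of $G$, finitely generated), and finitely many automorphisms $f_{[a],\pi}$ for $a \in A$ and $\pi \in \Alt(B)$, i.e.\ permutations applied at cells whose top symbol is $a$. The aim is to show that every $f_{U,\pi}$ with $U$ clopen and $\pi \in \Alt(B)$ lies in $\mathcal{H}$.

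We first reduce to cylinders. Since $U$ is a finite disjoint union of cylinders $[p]$ with $p \in A^{\mathcal{B}_r}$, and the maps $f_{[p],\pi}$ for distinct $p$ of the same domain commute and multiply to $f_{U,\pi}$ (here the safe set is $\{e\}$, so the definition applies without restriction), it suffices to handle $f_{[p],\pi}$. Such a map applies $\pi$ at $g$ exactly when $x_{kg}=p_k$ for all $k \in \mathcal{B}_r$, so it is a conjunction of single-cell conditions $x_{kg}=p_k$ sitting at various offsets.

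We then realize the conjunction using commutators, in the style of Lemma~\ref{lem:ABC}. The key calculation is as follows. Let $h$ be a generator and let $\sigma$ be the partial shift moving the bottom track by $h$. Then $\sigma^{-1} f_{[a],\pi} \sigma$ applies $\pi$ at cell $g$ when the top symbol at the shifted position $hg$ (or $h^{-1}g$, depending on convention) is $a$; call this a \emph{shifted condition}. If two permutations $\alpha,\beta$ are applied at the same cell under conditions $P,Q$, then their commutator is $[\alpha,\beta]$ applied under the condition $P \wedge Q$. Because $\Alt(B)$ is perfect for $|B| \geq 5$, every element is a product of commutators, and by induction on the number of conjuncts we obtain $\pi$ under the condition $P\wedge Q$ for every $\pi \in \Alt(B)$. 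Conjugating by products of partial shifts lets us move the condition to any offset $k \in \mathcal{B}_r$ while $\pi$ stays at $g$ (the whole bottom track moves, so we conjugate back afterwards). Combining the conditions for all $k \in \mathcal{B}_r$ then yields $f_{[p],\pi}$.

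The main obstacle is the small alphabets $|B| = 3,4$, where $\Alt(B)$ is not perfect ($\Alt(3)\cong\Z_3$ is abelian), so commutators of single-cell permutations vanish. In that case we would instead work on several cells at once. Using partial shifts by distinct group elements we can place $\pi$ on a pattern domain $E$ with $|E|\geq 2$, and Lemma~\ref{lem:Derp} (with all $|A_g| = |B| \geq 3$) shows that the generated group contains all of $\Alt(B^E)$, which is perfect. The commutator argument is then run in $\Alt(B^E)$, after which we must check that the single-cell $f_{[p],\pi}$ is recovered as an element of $\Alt(B^E)$ under the condition $[p]$. One way is to use that $\pi \in \Alt(B)$, extended trivially to $B^E$, is even. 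Care is needed because $\Alt(B^E)$ must be realized under conditions rather than globally; this is exactly the setting of Lemma~\ref{lem:Derp}, with the edge permutations obtained under condition $[a]$ from commutators of shifted copies of $f_{[a],\pi}$.
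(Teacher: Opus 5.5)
Your argument for $|B|\ge 5$ is essentially the paper's: reduce to cylinders, move single-cell conditions with partial shifts, and form conjunctions via $[f_{U,\alpha},f_{V,\beta}]=f_{U\cap V,[\alpha,\beta]}$. The gap is the case $|B|\in\{3,4\}$. Your proposed repair cannot work with the generators you chose.

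Each of your generators sends $(x,z)$ to $(x,z')$, where $z'_g$ depends only on $x$ and on one bottom cell $z_{hg}$, with $h$ fixed. This property is preserved under composition and inversion, so no element of your $\mathcal{H}$ couples two bottom cells. Shifted copies of $f_{[a],\pi}$ acting at different cells commute, so their commutators are trivial. Hence the hypothesis of Lemma~\ref{lem:Derp} (all of $\Alt(A_u\times A_v)$ on each edge) is never available; you only ever get $\Alt(A_u)\times\Alt(A_v)$.

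In fact your $\mathcal{H}$ is genuinely too small, not just your argument. The subgroup of its elements that act cellwise is generated by the conjugates $f_{h[a],\pi}$ with $\pi\in\Alt(B)$, together with the constant permutations. Composing with the abelianization $\Alt(B)\to\Z_3$ sends this subgroup into the $\Z_3$-span of the indicators of the single-cell cylinders $h[a]$ and of the constants. Every element of that span has zero mixed second difference in any two cells. Now take $X=A^G$ and a two-cell pattern $p$: the indicator of $[p]$ has mixed second difference $1$ in the cells of $p$. So $f_{[p],\pi}\notin\mathcal{H}$ for a $3$-cycle $\pi$. Routing through multi-cell permutations would also need a nontrivial safe set for $[p]$, and for arbitrary $X$ (e.g.\ one containing a constant point) none need exist.

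The missing idea is to put odd permutations into the generating set. Take $f_{[a],\pi}$ for all $\pi\in\Sym(B)$; these are still automorphisms since $B^G$ is a full shift. Conjugating by partial shifts gives all $f_{h[a],\pi}$ with $\pi\in\Sym(B)$. The induction step is then $[f_{[p|D'],\pi'},f_{[p|\{g\}],\pi}]=f_{[p],[\pi',\pi]}$, with $\pi'\in\Alt(B)$ from the induction hypothesis and $\pi\in\Sym(B)$ from the single-cell case. For all $|B|\ge 3$ we have $[\Alt(B),\Sym(B)]=\Alt(B)$, since the lower central series of $\Sym(B)$ stops at $\Alt(B)$; for instance $[(1\;2),(1\;2\;3)]$ is a $3$-cycle. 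So the induction works uniformly without any perfectness of $\Alt(B)$, and this is exactly the paper's proof.
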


\begin{proof}
Let $\mathcal{H}$ be the group generated by (finitely many generators for) the partial shifts, and all $f_{[a], \pi}$ where $\pi \in \Sym(B)$ and $a \in A$. We observe that for any clopen sets $U, V$ and any $\pi, \pi' \in \Sym(B)$, we have
\[ [f_{U,\pi}, f_{V,\pi'}] = f_{U \cap V, [\pi, \pi']}. \]
Using this, and the fact that the lower central series of $S_n$ terminates in $A_n$ for $n \geq 3$, we can prove by induction on $|D|$ that for all cylinders $[p]$ and patterns $p \in A^D$ the automorphisms $f_{[p], \pi}$ are in $\mathcal{H}$.

For the case $|D| = 1$, we note that $f_{g[a], \pi} \in \mathcal{H}$ even for all $\pi \in \Sym(B)$, by conjugating $f_{[p], \pi}$ with a partial shift. Specifically use the trivial decomposition $B = \{1\} \times B$ so that the entire $B$-track is shifted. For the general case, let $D = D' \sqcup \{g\}$. Then we have in $\mathcal{H}$ the maps $[f_{[p|D'], \pi'}, f_{[p|\{g\}], \pi}] = f_{p, [\pi', \pi]}$ for $\pi' \in \Alt(B)$ and $\pi \in \Sym(B)$. Since the commutators $[\pi', \pi]$ generate $\Alt{B}$, this proves the induction step.

For any clopen set $U$, we can write it as a union of cylinders, and represent $f_{U,\pi}$ as a commuting product $f_{U,\pi} = \prod_i f_{[p_i], \pi}$.
\end{proof}

\begin{theorem}
\label{thm:FGNonRF}
If $G$ is finitely generated and $X \subset A^G$ does not have dense periodic points then $\Aut(X \times B^G)$ has a finitely-generated subgroup $\mathcal{G}$ that is not residually finite if one of the following holds:
\begin{itemize}
\item $|B| = mn$ with $m \geq 2, n \geq 3$, or
\item $|B| \geq 3$ and $X$ is $\mathcal{B}_1$-aperiodic.
\end{itemize}
In the first case, we can use the group $\mathcal{G}$ constructed in the proof of Lemma~\ref{lem:fupi}. 
\end{theorem}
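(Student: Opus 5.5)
Take $\mathcal{G}$ to be the finitely-generated group $\mathcal{H}$ from Lemma~\ref{lem:fupi}: it is generated by finitely many partial shifts (Lemma~\ref{lem:PartialFG}) together with the finitely many $f_{[a],\pi}$, $a \in A$, $\pi \in \Sym(B)$. The plan is to rerun the proof of Theorem~\ref{thm:NonRF} inside $\mathcal{H}$. Let $U$ be a cylinder without totally periodic points, recoded to $U=[a]$, and let $f = f_{U,\hat\pi}$ for a nontrivial even $\hat\pi \in \Alt(B)$; by Lemma~\ref{lem:fupi}, $f \in \mathcal{H}$. Suppose $\phi : \mathcal{H} \to Q$ is a finite quotient with $\phi(f) \neq e$. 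Take $n > |Q|$ and apply Lemma~\ref{lem:Eventually} with the sets $V_{S,T}$ exactly as in the proof of Theorem~\ref{thm:NonRF}. This writes $f$ as a commuting product of elements $f_i$, where $f_i$ lies in a copy $\mathcal{G}_i$ of $\Alt(E)$ with $|E| > n$, acting on the track $B^S$ under occurrences of $V_{S,T}$. The new ingredient needed is $\mathcal{G}_i \leq \mathcal{H}$. Once that holds, some $\phi(f_i) \neq e$, so the simple group $\mathcal{G}_i$ embeds in $Q$, which is a contradiction.

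To show that $\mathcal{H}$ contains the natural copy of $\Alt(B^S)|V_{S,T}$, i.e.\ $\pi \mapsto f_{V_{S,T},\pi}$ for $\pi \in \Alt(B^S)$ (this is fine for the full shift, where the exchangeability class is everything), I would use Lemma~\ref{lem:Derp} on the graph with vertex set $S$, which is connected because colonies are connected, and edges between adjacent cells. In the first case write $B = B_1 \times B_2$ with $|B_1| = m \geq 2$ and $|B_2| = n \geq 3$. Partial shifts act as conjugators: conjugating $f_{W,\pi}$ for $\pi \in \Alt(B)$ by a partial shift $\sigma_{d,h}$ moves the $B_2$-component to the neighbouring cell $hg$ and leaves the $B_1$-component at $g$. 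So, for a suitable clopen $W$ and for every $\pi \in \Alt(B_1 \times B_2)$, we obtain the permutation acting on the $B_1$-coordinate at $g$ and the $B_2$-coordinate at $hg$. Working with the finer product $\prod_{g\in S}(B_1 \times B_2)$, whose coordinates are indexed by a connected graph on $S \times \{1,2\}$ with some coordinate of size at least $3$, Lemma~\ref{lem:Derp} then yields $\Alt(\prod_g B_1\times B_2)=\Alt(B^S)$ conditioned on $W$. The conditioning on $V_{S,T}$ has to be carried along throughout. Since $V_{S,T}$ is clopen, commutator identities of the form $[f_{U,\pi},f_{V,\pi'}]=f_{U\cap V,[\pi,\pi']}$ let us intersect control sets. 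The safety of $S$ for $V_{S,T}$ holds because distinct brains have disjoint colonies, so every generator used is a genuine automorphism.

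In the second case, $|B|\geq 3$ and $X$ is $\mathcal{B}_1$-aperiodic, so there is no splitting of $B$. Here I would instead use the $\Alt(B)$ maps $f_{W,\pi}$ at a single cell, together with conjugation by full partial shifts, to build two-cell permutations $\Alt(B\times B)$ on neighbours $g, sg$. The route is commutators of the form $[f_{W,\pi}, \sigma f_{W',\pi'}\sigma^{-1}]$ in the style of the calculation in Lemma~\ref{lem:ABC}. The aperiodicity supplies a clopen set, built from the local pattern, that distinguishes $g$ from $sg$, so that the control $W$ is not accidentally satisfied at both cells. Then Lemma~\ref{lem:Hypergraph} and Lemma~\ref{lem:Derp} finish the argument as before.

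I expect the main obstacle to be this generation step: producing genuine two-coordinate alternating permutations conditioned on $V_{S,T}$, with the control and data tracks kept separate. This is where the alphabet hypotheses enter, through the $m\cdot n$ split or through $\mathcal{B}_1$-aperiodicity, and where the parity bookkeeping of Lemma~\ref{lem:ABC} matters. The remaining steps are direct transcriptions of the proof of Theorem~\ref{thm:NonRF}.
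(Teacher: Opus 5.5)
Your first case follows the paper's argument. $f=f_{U,\hat\pi}$ and the $V_{S,T}$-controlled single-cell maps lie in $\mathcal{H}$ by Lemma~\ref{lem:fupi}. Conjugating by the full $B$-track shift and by a partial shift of one factor yields $\Alt(B_1^{\{g\}}\times B_2^{\{h\}})$ under occurrences of $V_{S,T}$. Lemma~\ref{lem:Derp} on the coordinates $S\times\{1,2\}$ then gives $\Alt(B^S)$. The paper allows arbitrary $g,h$, which gives a complete bipartite graph, so it does not need connectivity of $S$ here; your restriction to neighbours is also fine.

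The second case has a genuine gap. You stay inside $\mathcal{H}$ and try to build two-cell permutations in $\Alt(B\times B)$ as commutators of single-cell maps $f_{W,\pi}$ and shifts of the whole $B$-track. This cannot work. Every such generator has the form $(x,y)\mapsto(x,y')$ with $y'_g=\tau_{x,g}(y_{hg})$, for a fixed $h\in G$ and $x$-dependent $\tau_{x,g}\in\Sym(B)$. So each new cell of the bottom track is a permutation of exactly one old cell. This form is preserved by composition and inversion, hence by all conjugates and commutators. A genuine element of $\Alt(B^{\{g,sg\}})$, such as the $3$-cycle $(00;\;01;\;10)$, is not of this form. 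When $|B|$ is prime (e.g.\ $|B|=3$, exactly the situation the second case must cover), every decomposition $B\cong B_1\times B_2$ is trivial. So the only partial shifts are full track shifts, and $\mathcal{H}$ contains no element mixing two cells of the $B$-track. In particular the groups $\mathcal{G}_i\cong\Alt(B^S)$ are not in $\mathcal{H}$ for $|S|\geq 2$. This is why the statement promises $\mathcal{G}=\mathcal{H}$ only in the first case.

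The missing idea is that two-cell maps must be added as extra generators, and $\mathcal{B}_1$-aperiodicity is what makes that possible.
\begin{itemize}
\item The paper recodes $X$ so that adjacent cells always carry different symbols.
\item It adds the finitely many maps $f_{[a@e\,\sqcup\, b@s],\pi}$ with $s$ a generator, $a\neq b$ and $\pi\in\Alt(B^{\{e,s\}})$. The condition $a\neq b$ is exactly what makes $\{e,s\}$ safe for this cylinder, so these are automorphisms.
\item The recoding guarantees that every adjacent pair $g,sg$ in a colony lies under such a cylinder.
\item Commutators with $V$-controlled single-cell $3$-cycles at relative position $g$, together with simplicity of $\Alt(B^2)$, localize these maps to a copy of $\Alt(B^{\{g,sg\}})$ acting only under occurrences of $V$.
\item Lemma~\ref{lem:Derp} on the connected colony then finishes.
\end{itemize}
So aperiodicity is used to make the two-cell generators legitimate and applicable everywhere, not to derive them from single-cell maps.
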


\begin{proof}
Note that in each case $|B| \geq 3$, so Lemma~\ref{lem:fupi} applies. As generators of the finitely-generated subgroup $\mathcal{G}$ we take the generators of $\mathcal{H}$ from Lemma~\ref{lem:fupi}, and in the case that $|B| \geq 3$ and $X$ is $\mathcal{B}_1$-aperiodic, also finitely many additional $f_{U, \pi}$ (where $\pi$ permutes a non-singleton safe set of some clopen set $U$)

Consider first the case $|B| = mn$ with $m \geq 2, n \geq 3$. We realize the scheme from the proof of Theorem~\ref{thm:NonRF} in the group $\mathcal{G}$. We note that the map $f = f_{U, \hat\pi}$ from that proof is directly in $\mathcal{G}$ by Lemma~\ref{lem:fupi}. Recall that here $U$ is the nonempty clopen set containing no totally periodic points (which was assumed to exist), and $\hat\pi \in \Sym(A)$ is any nontrivial permutation whose parity is that of $|A|$.

We now need to show that for a clopen set $V$ with safe set $S$, we can perform any even permutation of $\pi \in \Alt(k^S)$. Again by Lemma~\ref{lem:fupi}, we have $f_{V, \pi} \in \mathcal{G}$ for any $\pi \in \Alt(B^{\{g\}})$. Fix a decomposition $B \cong B_1 \times B_2$ with $|B_1| = m, |B_2| = n$. Using partial shifts $\sigma'_k$ for this decomposition, and $\sigma_k$ for the trivial decomposition $B \cong \{1\} \times B$, we obtain that $f_{V, \pi} \in \mathcal{G}$ whenever $\pi \in \Alt(B_1^{\{g\}} \times B_1^{\{h\}})$, and $g, h \in G$.

In particular, if $S$ is a safe set for $V$, consider the permutations $P$ of $(B_1 \times B_2)^S$ that we can perform in relative positions under an occurrence of $V$. If we construct the complete bipartite graph with $2|S|$ nodes, one corresponding to each $B_1^{\{g\}}$ or $B_2^{\{h\}}$-symbol, then we have in $\mathcal{G}$ the $\Alt$-groups corresponding to all edges. By Lemma~\ref{lem:Derp}, we have $P = \Alt((B_1 \times B_2)^S) = \Alt(B^S)$. We conclude that all the maps $f_i$ that appear in the proof of Theorem~\ref{thm:NonRF} and all the simple groups $\mathcal{G}_i$ are contained in $\mathcal{G}$. Thus, $\mathcal{G}$ is not residually finite.

Consider then the case $|B| \geq 3$, and suppose that $X$ is $\mathcal{B}_1$-aperiodic. In this case, we use the fact that when following the protocol, the safe sets of the clopen sets $V$ (that correspond to various shapes of safe sets relative to the brain) as connected.

Now, we use the $\mathcal{B}_1$-aperiodicity assumption: If we perform a conjugacy on $X$ initially, we may assume that $\mathcal{B}_1$-aperiodicity manifests in no symbol ever appearing next to itself in configurations $x \in X$. Then for any two adjacent positions $\{g, sg\}$ in the safe set $S$ of $V$, we always have in $V$ different symbols at $g$ and $sg$. We may in our generating set include all permutations of $\Alt(B^2)$ that only act under a specific pair of symbols of the first track, seen in relative positions $\{e, s\}$. Conjugating such permutations with $3$-rotations of individual symbols at relative positions $g$, we effectively obtain a copy of the group $\Alt(B^2)$ which only acts in the relative positions $g, sg$ under an occurrence of $V$.

Since $S$ is connected, by Lemma~\ref{lem:Derp}, we can perform any even permutation of $B^S$ under an occurrence of $V$. Again, this means the automorphisms $f_i$ and the simple groups $\mathcal{G}_i$ are contained in $\mathcal{G}$.
\end{proof}


\bibliographystyle{plain}
\bibliography{../../../bib/bib}{}

\end{document}